\newcommand{\set}[1]{\left\{#1\right\}}
\newcommand{\abs}[1]{\left\vert#1\right\vert}%
\newcommand{\norm}[1]{\left\Vert{#1}\right\Vert}%
\DeclareMathOperator{\supp}{\mathrm{supp}}
\theoremstyle{definition}
\newtheorem{defn}{Definition}[section]
\theoremstyle{theorem}
\newtheorem{thm}[defn]{Theorem}
\newtheorem{prop}[defn]{Proposition}
\newtheorem{cor}[defn]{Corollary}
\newtheorem{lem}[defn]{Lemma}
\theoremstyle{remark}
\newtheorem{rem}[defn]{Remark}
\numberwithin{equation}{section}
\begin{document}

%\tableofcontents

\title[Multilinear Fourier Multipliers with Minimal Regularity, I]{Multilinear Fourier Multipliers with Minimal Sobolev Regularity, I}

\author[Grafakos]{Loukas Grafakos}
\address{Department of Mathematics, University of Missouri, Columbia, MO 65211, USA}
\email{grafakosl@missouri.edu}
\thanks{The first author would like to thank the Simons Foundation and the University of Missouri Research Council.}

\author[Nguyen]{Hanh Van Nguyen}
\address{Department of Mathematics, University of Missouri, Columbia, MO 65211, USA}
\email{hnc5b@mail.missouri.edu}
\thanks{The second author would like to thank Hue University - College of Education for their support.}

\begin{abstract}
We find optimal conditions
 on $m$-linear Fourier multipliers to give rise to bounded  operators from a product of Hardy spaces
 $H^{p_j}$, $0<p_j\le 1$,  to  Lebesgue spaces $L^p$.   The  conditions we obtain are necessary and sufficient
for boundedness and are expressed
 in terms of $L^2$-based Sobolev spaces. Our results
   extend those obtained in the linear case ($m=1 $) by Calder\'on and Torchinsky   \cite{CalTochin}
and in the bilinear case ($m=2$)
   by Miyachi and Tomita \cite{MiTo}.  We also prove a coordinate-type H\"ormander integral condition which we use to obtain certain extreme cases.
\end{abstract}

\maketitle

\section{Introduction}
Let $\sigma$ be a bounded function on $\mathbb R^{n}.$ We denote by $T_{\sigma}$ the linear Fourier multiplier operator, whose action on Schwartz functions is given by
\begin{equation}\label{equ:LiMulOp}
T_{\sigma}(f)(x) = \int_{\mathbb R^{n}} {\sigma}(\xi) \widehat{f}(\xi)e^{2\pi ix \xi}d\xi.
\end{equation}
Mikhlin's \cite{Miklin} classical result states that the $T_\sigma$ admits an $L^p$-bounded extension for $1<p<\infty,$
whenever
\begin{equation}\label{gghhtt}
{\left\vert{\partial^{\alpha}_{\xi}{\sigma}(\xi)}\right\vert} \le
C_{\alpha}{\left\vert{\xi}\right\vert}^{-{\left\vert{\alpha}\right\vert}},\quad \xi\ne 0
\end{equation}
for all multi-indices $\alpha$ with ${\left\vert{\alpha}\right\vert}\le [\frac{n}2]+1.$
This result was refined by H\"ormander \cite{Horman} who proved that \eqref{gghhtt} can be replaced by the
Sobolev-norm condition
\begin{equation}\label{gghhtt2}
\sup_{j\in\mathbb Z}\left\Vert{{\sigma}(2^{j}(\cdot))\widehat{\psi}}\right\Vert_{W^s}<\infty,
\end{equation}
for some $s>\frac{n}2,$ where $\widehat{\psi}$ is a smooth function supported in
$\frac12\le{\left\vert{\xi}\right\vert}\le 2$ that satisfies
$$
\sum_{j\in\mathbb Z}\widehat\psi(2^{-j}\xi) =1
$$
for all $\xi\ne 0.$ Here $\|g\|_{W^s} =\| (I-\Delta)^{s/2}g\|_{L^2}$, where $I$ is the identity operator and $\Delta= \sum_{j=1}^n \partial_j^2$, is the Laplacian on $\mathbb R^n$.

Calder\'on and Torchinsky   \cite{CalTochin} showed  that the   Fourier multiplier operator in \eqref{equ:LiMulOp}
admits a bounded extension from the Hardy space  $H^p$ to $H^p$ with  $0<p\le 1$ if
$$
\sup_{t>0}\left\Vert{{\sigma}(t\cdot)\widehat{\psi}}\right\Vert_{W^s}<\infty
$$
and $s>\frac{n}{p} - \frac{n}{2} $.
Here the index $s=\frac{n}{p} - \frac{n}{2}$ is critical in the sense that  the boundedness of $T_\sigma$ on $H^p$ does not hold if $s\le \frac{n}{p} - \frac{n}{2}$. This was pointed out later by Miyachi and Tomita  \cite{MiTo}.

The bilinear   counterpart of the Fourier multiplier theory has been rather similar in the formulation of  results, but
substantially  more complicated in   its proofs.  The theory of multilinear operators, and in particular that of multilinear multiplier operators, originated in the work of  Coifman and Meyer
\cite{CM1}, \cite{CM2}, \cite{CoiMey} and resurfaced in the work of Grafakos and Torres \cite{GraTor}.
Multilinear Fourier multipliers are bounded functions  $\sigma$ on $\mathbb R^{mn} = \mathbb R^n\times\cdots\times \mathbb R^n$  associated with the $m$-linear Fourier multiplier operator
\begin{equation}\label{equ:TSigmaMul}
T_{\sigma}(f_1,\ldots,f_m)(x) = \int_{\mathbb R^{mn}}e^{2\pi ix\cdot (\xi_1+\cdots+\xi_m)}\sigma(\xi_1,\ldots,\xi_m)\widehat{f_1}(\xi_1)\cdots \widehat{f_m}(\xi_m)\, d\vec \xi ,
\end{equation}
where  $f_j$ are in the Schwartz space of $\mathbb R^n$ and $d\vec \xi =d\xi_1\cdots d\xi_m  $.

Tomita \cite{TomitaJFA}
obtained $L^{p_1}\times \cdots \times L^{p_m}\to L^p$   boundedness ($1<p_1,\ldots,p_m,p< \infty$) for   multilinear multiplier operators
under a condition analogous to \eqref{gghhtt2}.  Grafakos and Si \cite{GraSi} extended Tomita's results to the case $p\le 1$ by using   $L^r$-based Sobolev norms for ${\sigma}$ with $1<r\le 2.$   Fujita and Tomita \cite{FuTomi} provided weighted  extensions of these results but also noticed that the Sobolev space $W^s$ in \eqref{gghhtt2} can be
replaced by a product-type Sobolev space $W^{(s_1,\dots ,s_m)}$ when $p> 2$.
Grafakos, Miyachi, and Tomita \cite{GrMiTo} extended the range of $p$ in \cite{FuTomi}
to $p>1$ and obtained boundedness even  in the endpoint case
where all but one indices $p_j$ are equal to infinity.
Miyachi and Tomita \cite{MiTo} provided    extensions of  the Calder\'on and Torchinsky  results \cite{CalTochin} for  Hardy spaces  in the bilinear case; it is noteworthy that in \cite{MiTo} it was pointed out  that the  conditions on the indices  are sharp, even in the linear case, i.e., in the  Calder\'on and Torchinsky theorem.

Following this stream of work, we are interested in finding conditions analogous to those in \cite{MiTo} in the multilinear setting, i.e., when $m\ge 3$. Our work is inspired by that of Calder\'on and Torchinsky \cite{CalTochin}, Grafakos and Kalton \cite{GrKal}, and certainly of Miyachi and Tomita \cite{MiTo}. As in \cite{MiTo}, we find necessary and sufficient conditions (which coincide with those in \cite{MiTo}) that imply boundedness for   multilinear multiplier operators on a products of   Hardy spaces. One important aspect of this work is an appropriate regularization of the multilinear multiplier
operator which allows the
interchange of its action with    infinite sums of $H^{p_j}$ atoms (see Section 3). In this article we restrict attention to the case where the domain is a product of Hardy spaces. We study the case where the domain is a mix of Lebesgue and
Hardy spaces in a subsequent article.

We   introduce  the Sobolev   spaces that will be used throughout   this paper.
First, for $x\in \mathbb R^n$ we set $\left<x\right> = \sqrt{1+|x| ^2}.$
For $s_1,\ldots,s_m>0,$ we denote by $W^{(s_1,\ldots,s_m)}$ the Sobolev space (of product type) consisting all   functions $f$ on $\mathbb R^{mn}$ such that
\begin{equation*}%\label{equ:SoblvNorm}
\left\Vert{f}\right\Vert_{W^{(s_1,\ldots,s_m)}}:=
\left(
\int_{\mathbb R^{mn}}
\left\vert \widehat{f}(y_1,\ldots,y_m)
\left<y_1\right>^{s_1}\cdots \left<y_m\right>^{s_m}
\right\vert^2 dy_1\cdots dy_m
\right)^{\frac12} <\infty.
\end{equation*}
Notice that $W^{(s_1,\ldots,s_m)}$ is a subspace of $L^2$.

Let $\psi$ be a smooth function on $\mathbb R^{mn}$ whose Fourier transform $\widehat{\psi}$ is supported in $\frac12\le {\left\vert{\xi}\right\vert}\le 2$ and satisfies
\begin{equation*}%\label{equ:adapto1}
\sum_{j\in \mathbb Z}\widehat{\psi}(2^{-j}\xi) = 1,\qquad \xi\ne 0.
\end{equation*}
For $0<p<\infty$ we denote by $H^p$ the Lebesgue space $L^p$ if $p>1$ and the Hardy space $H^p$ if $p\le 1$.
The following is the main result of this paper.

\begin{thm}\label{thm:General}
  Let $\frac{n}2<s_1,\ldots,s_m<\infty,$ $0<p_1,\ldots,p_m\le 1,0< p< \infty$ such that
  $$\frac{1}{p_1}+ \cdots + \frac{1}{p_m} = \frac{1}{p},$$
  and that
  \begin{equation}\label{equ:SIndexCond}
  \sum_{k\in J}\Big(\dfrac{s_k}{n}-\dfrac1{p_k}\Big)>-\dfrac12
  \end{equation}
  for every subset $J\subset\left\{1,2,\ldots,m\right\}.$
  If the function ${\sigma}$ defined on $\mathbb R^{mn}$ satisfies
  \begin{equation}\label{equ:SigmCond}
  A:=\sup_{j\in\mathbb Z}\left\Vert{\sigma(2^j\cdot)\widehat\psi}\right\Vert_{W^{(s_1,\ldots,s_m)}}<\infty,
  \end{equation}
  then $T_{\sigma}$ is bounded from $H^{p_1}\times\cdots\times H^{p_m}\longrightarrow  L^p$ with constant at most a multiple of $A.$ Moreover, the set of {$2^m-1$} conditions \eqref{equ:SIndexCond}  is optimal.
\end{thm}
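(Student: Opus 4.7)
The plan is to combine a Littlewood--Paley decomposition of the symbol $\sigma$ with an atomic decomposition of each Hardy-space input. Since $\sum_i 1/p_i = 1/p$ with each $p_i\le 1$, we have $p\le 1/m\le 1$, so the $L^p$ quasi-norm is $p$-subadditive. Normalize $A=1$ and set $\sigma_j(\xi) = \sigma(\xi)\widehat\psi(2^{-j}\xi)$, so that $\sigma=\sum_{j\in\mathbb Z}\sigma_j$ and each $\sigma_j$ is frequency localized in the annulus $\{|\xi|\sim 2^j\}$. A direct change of variables shows that the kernel of $T_{\sigma_j}$ satisfies $K_j(y) = 2^{jmn}\Phi_j(2^j y)$, where $\Phi_j$ is the inverse Fourier transform of $\sigma(2^j\cdot)\widehat\psi$, and by \eqref{equ:SigmCond} obeys $\int|\Phi_j(u_1,\dots,u_m)|^2\prod_i\langle u_i\rangle^{2s_i}\,du\le 1$. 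Since each $s_i>n/2$, Cauchy--Schwarz then furnishes integrable pointwise decay of $\Phi_j$ in each variable separately.

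Next, write $f_i=\sum_{k_i}\lambda_{i,k_i}a_{i,k_i}$ via the atomic decomposition of $H^{p_i}$, each $a_{i,k_i}$ being a $(p_i,2,N)$-atom with $N$ vanishing moments (taken large, depending on $s_i$ and $p_i$), supported in a cube $Q_{i,k_i}$ of center $c_{i,k_i}$ and sidelength $\ell_{i,k_i}$, and satisfying $\|a_{i,k_i}\|_{L^2}\le |Q_{i,k_i}|^{1/2-1/p_i}$. Applying the regularization procedure from Section~3 one legitimately interchanges $T_\sigma$ with the infinite atomic sums. Using $p$-subadditivity, the problem reduces to a uniform atomic estimate of the form
\[
\|T_\sigma(a_1,\dots,a_m)\|_{L^p}\le C\,\Psi(\ell_1,\dots,\ell_m),
\]
where $\Psi$ tracks the relative sizes of the atom supports and is tailored so that H\"older-type summation in the atomic indices reassembles $\prod_i\|f_i\|_{H^{p_i}}$.

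The core of the work is this atomic estimate. By translation and dilation invariance one may normalize $c_i=0$ and $\min_i\ell_i=1$. Partition $\mathbb R^n=\bigsqcup_{J\subseteq\{1,\dots,m\}}E_J$ so that on $E_J$ the point $x$ is ``far'' from $Q_i$ for $i\in J$ and ``near'' $Q_i$ for $i\notin J$, measured on a suitable scale. On the near cell $E_\emptyset$, Plancherel and H\"older together with the product Sobolev condition \eqref{equ:SigmCond} bound $T_\sigma(a_1,\dots,a_m)$ via $\prod_i\|a_i\|_{L^2}$. On each far cell $E_J$ with $J\ne\emptyset$, the moment vanishing of $a_i$ for $i\in J$ produces Taylor-remainder factors $(\ell_i/|x-c_i|)^N$, paired with the decay of $K_j$ in the corresponding variables. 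Summing $T_\sigma=\sum_j T_{\sigma_j}$, for fixed $J\ne\emptyset$ the cumulative exponent of $2^j$ in the resulting geometric series matches $n\sum_{i\in J}(1/p_i-s_i/n)-n/2$, so convergence is equivalent precisely to \eqref{equ:SIndexCond}; since all $2^m-1$ non-empty subsets $J$ arise, every one of the index conditions is genuinely needed.

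The main obstacle I anticipate is the bookkeeping between the atom scales $\ell_i$ and the multiplier scale $2^j$, in particular verifying that the exponent of $2^j$ arising from each far cell $E_J$ matches \eqref{equ:SIndexCond} \emph{exactly}; a subsidiary subtlety is the rigorous interchange of $T_\sigma$ with the atomic sums, handled by the regularization of Section~3. For the sharpness claim, for each $J\subseteq\{1,\dots,m\}$ violating \eqref{equ:SIndexCond} I would construct a tensor-product symbol $\sigma^{(J)}$ using a critically regular lacunary factor in each variable $\xi_i$ for $i\in J$ (lifting the linear Miyachi--Tomita counterexample coordinatewise), and test $T_{\sigma^{(J)}}$ against explicit lacunary sums of modulated bumps to produce a divergent $L^p$ norm, thereby confirming the necessity of each of the $2^m-1$ conditions.
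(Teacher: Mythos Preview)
Your outline has the right ingredients (Littlewood--Paley decomposition of $\sigma$, atomic decompositions of the $f_i$, Taylor remainders from vanishing moments, the regularization of Section~3), but the reduction step is flawed.

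You write: ``Using $p$-subadditivity, the problem reduces to a uniform atomic estimate of the form $\|T_\sigma(a_1,\dots,a_m)\|_{L^p}\le C\,\Psi(\ell_1,\dots,\ell_m)$.'' This does not work. After $p$-subadditivity you are left with
\[
\sum_{k_1,\dots,k_m}|\lambda_{1,k_1}|^p\cdots|\lambda_{m,k_m}|^p\,\|T_\sigma(a_{1,k_1},\dots,a_{m,k_m})\|_{L^p}^p
=\prod_{i=1}^m\Big(\sum_{k_i}|\lambda_{i,k_i}|^p\Big)\cdot(\cdots),
\]
and the atomic decomposition only controls $\sum_{k_i}|\lambda_{i,k_i}|^{p_i}$. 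Since $1/p=\sum_j 1/p_j>1/p_i$, we have $p<p_i$, and there is no reason for $\sum_{k_i}|\lambda_{i,k_i}|^p$ to be finite. No choice of $\Psi$ depending only on the side-lengths repairs this, and your normalization ``$c_i=0$ for all $i$'' is also illegitimate: a single translation fixes only one center, so the relative positions of the cubes $Q_{i,k_i}$ are genuine data that must survive the analysis.

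The paper avoids this by \emph{not} reducing to a scalar single-tuple estimate. Instead, for each nonempty $J\subset\{1,\dots,m\}$ it proves a \emph{pointwise product} bound (the Key Lemma): on the region $(\bigcap_{i\notin J}Q_i^*)\setminus\bigcup_{i\in J}Q_i^*$ one has
\[
|T_\sigma(a_1,\dots,a_m)(x)|\lesssim A\,b_1(x)\cdots b_m(x),\qquad \|b_i\|_{L^{p_i}}\lesssim 1,
\]
with each $b_i$ depending only on $Q_i$. This product structure lets one sum \emph{inside} the product and apply H\"older's inequality,
\[
\Big\|\prod_i\sum_{k_i}|\lambda_{i,k_i}|\,b_{i,k_i}\Big\|_{L^p}\le\prod_i\Big\|\sum_{k_i}|\lambda_{i,k_i}|\,b_{i,k_i}\Big\|_{L^{p_i}},
\]
so that each factor is controlled by $\ell^{p_i}$-summability of $(\lambda_{i,k_i})$, exactly what the atomic decomposition provides. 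The ``near'' region $J=\emptyset$ is handled not by a bare Plancherel estimate but by the Grafakos--Kalton averaging lemma (Lemma~\ref{lem:LpAvg}), again to preserve the product structure. Your Taylor-remainder analysis on the far regions is morally correct and reappears inside the proof of the Key Lemma, but it has to be packaged to produce the functions $b_i$, not a scalar $\Psi$.

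On sharpness: a coordinatewise tensor-product symbol will recover the conditions for singletons $J=\{i\}$, but for $|J|\ge 2$ the paper's counterexamples are \emph{not} tensor products---they involve $\widehat\varphi$ evaluated at linear combinations $\frac{1}{r}\sum_{i\le r}(\xi_i-\xi_k)$, which is what forces the interaction term $\sum_{i\in J}(s_i/n-1/p_i)$ to appear. A pure tensor construction is unlikely to detect the coupled conditions.
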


%In the last section, we construct examples to justify the minimality of conditions \eqref{equ:SIndexCond} claimed  in the main theorem.

\begin{rem}
Conditions \eqref{equ:SIndexCond} imply that $s_i>\frac{n}2$ whenever $0<p_i\le 1$ for all $1\le i\le m.$ Moreover, the condition in \eqref{equ:SigmCond} is sufficient to guarantee that $\sigma\in L^{\infty}(\mathbb R^{mn}).$ Indeed, suppose that $\sigma$ is a function on $\mathbb R^{mn}$ that satisfies \eqref{equ:SigmCond}.
 It is easy to see that
  $\widehat\psi(\frac12x)+\widehat\psi(x)+\widehat\psi(2x)=1$
   for all $1\le x\le 2.$ Now we want to verify that ${\left\vert{\sigma(2^kx)}\right\vert}$ is uniformly bounded in $k$ for a.e. $1\le {\left\vert{x}\right\vert}\le 2.$ Applying the Cauchy-Schwarz  inequality and using the conditions $s_i>\frac{n}2,$ we have
  $$
  \begin{aligned}
  &{\left\vert{\sigma(2^kx)}\right\vert} \\
  =&  {\left\vert{\sum_{{\left\vert{l}\right\vert}\le 1}\sigma(2^{k}x)
  \widehat\psi(2^lx)}\right\vert}\\
   \le & \sum_{{\left\vert{l}\right\vert}\le 1}{\left\vert{\int_{\mathbb R^{mn}}\big(\sigma(2^{k-l}\cdot)
   \widehat\psi\big)\spcheck(\xi)e^{2^{l+1}\pi ix\xi}d\xi}\right\vert}\\
   \le & \sum_{{\left\vert{l}\right\vert}\le 1}\int_{\mathbb R^{mn}}
   \prod_{i=1}^m(1+{\left\vert{\xi_i}\right\vert}^2)^{-\frac{s_i}2} {\left\vert{\prod_{i=1}^m(1+{\left\vert{\xi_i}\right\vert}^2)^{\frac{s_i}2} \big(\sigma(2^{k-l}\cdot)\widehat\psi\big)\spcheck(\xi_1,\ldots,\xi_m)}\right\vert}d\xi_1\cdots d\xi_m\\
   \le & \sum_{{\left\vert{l}\right\vert}\le 1}C(s_1,\ldots,s_m,n)\left\Vert{\sigma_{k-l}}\right\Vert_{W^{(s_1,\ldots,s_m)}}\le 3C(s_1,\ldots,s_m,n)\sup_{j\in\mathbb Z}\left\Vert{\sigma_j}\right\Vert_{W^{(s_1,\ldots,s_m)}},
  \end{aligned}
  $$
  for almost all $x$ satisfying $1\le |x|\le 2.$ Thus
  $$
  \left\Vert{\sigma}\right\Vert_{L^{\infty}(\mathbb R^{mn})}\le 3C(s_1,\ldots,s_m,n)\sup_{j\in\mathbb Z}\left\Vert{\sigma_j}\right\Vert_{W^{(s_1,\ldots,s_m)}}<\infty .
  $$
\end{rem}

The structure of this paper is as follows: Section 2 contains preliminaries and   known results. In Section 3, we regularize the multiplier to be able to work with a nicer operator and thus facilitate the passage of infinite sums in and out the operator in the proof of the main result given in Section 4.
In section 5, we construct examples to justify the minimality of conditions \eqref{equ:SIndexCond} claimed  in the main theorem. Section 6 will present some results about the boundedness of our operator in the extreme cases where we need the Coordinate-type H\"ormander integral conditions. The last section contains the detail proof of some technical lemmas using through the paper.

\section{Preliminaries and known results}

Now fix $0<p<\infty$ and a Schwartz function $\Phi$ with $\widehat{\Phi}(0)\ne 0.$ Then the Hardy space $H^p$ contains all tempered distributions $f$ on $\mathbb R^n$ such that
$$
\left\Vert{f}\right\Vert_{H^p}:= \left\Vert{\sup_{0<t<\infty}{\left\vert{\Phi_t*f}\right\vert}}\right\Vert_{L^p}<\infty.
$$
It is well known that the definition of the Hardy space does not depend on the choice of the function $\Phi.$ Note that $H^p=L^p$ for all $p>1.$ When $0<p\le 1,$ one of nice features of Hardy spaces is the atomic decomposition. More precisely, any function $f\in H^p$ ($0<p\le 1$) can be decomposed as
$f=\sum_{k}\lambda_ka_k,$
where $a_k$'s are $L^\infty$-atoms for $H^p$ supported in
 cubes $Q_k$ such that $\left\Vert{a_k}\right\Vert_{L^{\infty}}\le {\left\vert{Q_k}\right\vert}^{-\frac1p}$
and $\int x^\gamma a_k(x)dx=0$
for all ${\left\vert{\gamma}\right\vert}\le \lfloor n(\frac1p-1)\rfloor+1,$ and the coefficients $\lambda_k$ satisfy
$\sum_k{\left\vert{\lambda_k}\right\vert}^p\le 2^p\left\Vert{f}\right\Vert_{H^p}^p.$

We also use notation $A\lesssim B$ to demonstrate that $A\le CB$ for some constant $C>0,$ and $A\approx B$ if $A\lesssim B$ and $B\lesssim A$ simultaneously.

The following two lemmas are essentially contained   in \cite{MiTo} modulo some minor modifications.
\begin{lem}[\cite{MiTo}]\label{lem:LInfL2}
      Let $k,l$ be positive integers, $s_i>0$   for $1\le i\le k+l$, and let $1<\rho<\infty.$ Assume that ${\sigma}$ is a function defined on $\mathbb R^{kn}\times \mathbb R^{ln},$ supported in $\big\{(x,y)\in\mathbb R^{kn}\times \mathbb R^{ln}\ :\ {\left\vert{x}\right\vert}^2+{\left\vert{y}\right\vert}^2\le 4\big\}$, where we denote $x=(x_1,\ldots,x_k), y=(y_1,\ldots,y_l)$ with $x_i,y_j\in\mathbb R^n,$ and set $K={\sigma}\spcheck,$ the inverse Fourier transform of $\sigma.$ Then there exists a constant $C>0$ such that
      $$
      \left\Vert{\left<y_1\right>^{s_{1}} \cdots\left<y_l\right>^{s_{l}}K(x,y)}\right\Vert_{L^{\infty}(\mathbb R^{ln},dy)} \le C \left\Vert{\left<y_1\right>^{s_{1}}\cdots \left<y_l\right>^{s_{l}}K(x,y)}\right\Vert_{L^{\rho}(\mathbb R^{ln},dy)}
      $$
for all $x\in \mathbb R^{kn}$.
\end{lem}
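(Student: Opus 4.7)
The plan is to exploit the fact that, for each fixed $x$, the function $y\mapsto K(x,y)$ is band-limited. Since $\sigma$ is supported in $\{|x|^2+|y|^2\le 4\}$, the partial Fourier transform of $K(x,\cdot)$ in $y$ is a function whose support (in the dual variable $\eta\in\mathbb{R}^{ln}$) sits inside $\{|\eta|\le 2\}$, uniformly in $x$. This is the key structural input; everything else is a Nikol'skii-type estimate combined with a weight-transfer via Peetre's inequality.

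Concretely, I would fix once and for all a Schwartz function $\phi$ on $\mathbb{R}^{ln}$ whose Fourier transform equals $1$ on $\{|\eta|\le 2\}$. Then for every $x$ the reproducing identity $K(x,\cdot)=K(x,\cdot)\ast\phi$ holds, so for any $y_0\in\mathbb{R}^{ln}$ one gets
\begin{equation*}
w(y_0)\,K(x,y_0)=\int_{\mathbb{R}^{ln}} w(y)K(x,y)\,\frac{w(y_0)}{w(y)}\,\phi(y_0-y)\,dy,
\end{equation*}
where $w(y):=\langle y_1\rangle^{s_1}\cdots\langle y_l\rangle^{s_l}$. Applying Hölder's inequality with exponents $\rho,\rho'$ yields
\begin{equation*}
|w(y_0)K(x,y_0)|\le \|wK(x,\cdot)\|_{L^\rho(\mathbb{R}^{ln})}\left(\int_{\mathbb{R}^{ln}}\Big(\frac{w(y_0)}{w(y)}\Big)^{\rho'}|\phi(y_0-y)|^{\rho'}dy\right)^{1/\rho'}.
\end{equation*}

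The second factor must be bounded uniformly in $y_0$. Here I would invoke Peetre's inequality $\langle a\rangle^{s}\le 2^{s/2}\langle a-b\rangle^{s}\langle b\rangle^{s}$ (for $s>0$) componentwise, which gives
\begin{equation*}
\frac{w(y_0)}{w(y)}\le C\prod_{i=1}^{l}\langle y_{0,i}-y_i\rangle^{s_i}.
\end{equation*}
After translating $z=y_0-y$, the integral reduces to $\int_{\mathbb{R}^{ln}}\prod_i\langle z_i\rangle^{s_i\rho'}|\phi(z)|^{\rho'}dz$, which is finite because $\phi$ is Schwartz. Taking the supremum over $y_0$ on the left-hand side gives the desired $L^\infty$--$L^\rho$ bound with a constant depending only on $s_1,\dots,s_l$, $\rho$, $l$, and $n$, but independent of $x$.

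The only subtlety I expect is bookkeeping in justifying the reproducing formula: one should check that $K(x,\cdot)$ is regular enough (it is a tempered distribution whose Fourier transform is continuous and compactly supported in $\eta$ for each $x$, hence a bounded smooth function of $y$, which is more than enough). Beyond that, the estimate is entirely standard; the combination of band-limitedness with Peetre's inequality is precisely what lets one absorb the polynomial-type weights into the Schwartz kernel $\phi$.
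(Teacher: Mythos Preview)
Your proof is correct and is essentially identical to the paper's: both fix a Schwartz function whose Fourier transform equals $1$ on the ball of radius $2$ in $\mathbb{R}^{ln}$, use the resulting reproducing (convolution) identity for $K(x,\cdot)$, transfer the weight via Peetre's inequality, and finish with H\"older's inequality with exponents $\rho,\rho'$. The only differences are notational and in the order of presentation.
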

\begin{proof}
  Take $\varphi$ a Schwartz function on $\mathbb R^{ln}$ such that $\widehat{\varphi}(y) = 1$ for all $y\in \mathbb R^{ln},$ ${\left\vert{y}\right\vert}\le 2.$ Then we have ${\sigma}(x,y) = {\sigma}(x,y)\widehat{\varphi}(y).$ Using the inverse Fourier transform we have
  $$
  \begin{aligned}
  K(x,y) = \Big(K*(\delta_0\otimes {\varphi})\Big)(x,y)=& \int_{\mathbb R^{kn}\times \mathbb R^{ln}}K(x-u,y-v)\delta_0(u) {\varphi}(v)dudv\\
  =& \int_{\mathbb R^{ln}}K(x,y-v) {\varphi}(v)dv,
  \end{aligned}
  $$
  where $\delta_0$ is the Dirac distribution.  Therefore,
  $$
  \begin{aligned}
   & \left<y_1\right>^{s_{1}}  \cdots\left<y_l\right>^{s_{l}}{\left\vert{K(x,y)}\right\vert} \\
 =  &  \left<y_1\right>^{s_{1}} \cdots\left<y_l\right>^{s_{l}}
    {\left\vert{\int_{\mathbb R^{ln}} K(x_1,\ldots,x_k,y_1-v_1,\ldots,y_l-v_l) {\varphi}(v)dv }\right\vert}\\
    \lesssim& \int_{\mathbb R^{ln}}  \bigg( \prod_{j=1}^l \left<y_j-v_j\right>^{s_{j}}\bigg)
         {\left\vert{K(x_1,\ldots,x_k,y_1-v_1,\ldots,y_l-v_l)}\right\vert}
    \left<v_1\right>^{s_{1}} \cdots\left<v_l\right>^{s_{l}}{\left\vert{{\varphi}(v)}\right\vert}dv\\
    \le & C_1 \left\Vert{\left<y_1\right>^{s_{1}}\cdots \left<y_l\right>^{s_{l}}K(x,y)}\right\Vert_{L^{\rho}(\mathbb R^{ln},dy)}
    \left\Vert{\left<v_1\right>^{s_{1}} \cdots\left<v_l\right>^{s_{l}}{\left\vert{{\varphi}(v)}\right\vert}}\right\Vert_{L^{\rho'}(\mathbb R^{ln},dv)}\\
    \le & C_2\left\Vert{\left<y_1\right>^{s_{1}}\cdots \left<y_l\right>^{s_{l}}K(x,y)}\right\Vert_{L^{\rho}(\mathbb R^{ln},dy)},
  \end{aligned}
  $$
  where we used H\"older's inequality in the second to last line.
\end{proof}

\begin{lem}[\cite{MiTo}]\label{lem:ProdSobN}
  Let $s_i> \frac{n}{2}$ for $1\le i\le m,$ and let $\widehat{\zeta}$ be a smooth function which is supported in an annulus centered at zero. Suppose that $\Phi$ is a smooth function away from zero that satisfies the estimates
  $$
  {\left\vert{\partial^{\alpha}_{\xi}\Phi(\xi)}\right\vert}\le
  C_{\alpha}{\left\vert{\xi}\right\vert}^{-{\left\vert{\alpha}\right\vert}}
  $$
  for all $\xi\in \mathbb R^{mn},$ $x\ne 0$ and for all multi-indices $\alpha.$ Then there exists a constant $C$ such that
  $$
  \sup_{j\in\mathbb Z}\left\Vert{\sigma(2^{j}(\cdot))\Phi(2^j(\cdot))\widehat{\zeta}}\,\right\Vert_{W^{(s_1,\ldots,s_m)}}\le C\sup_{j\in\mathbb Z}\left\Vert{\sigma(2^{j}(\cdot))\widehat\psi}\, \right\Vert_{W^{(s_1,\ldots,s_m)}}.
  $$
\end{lem}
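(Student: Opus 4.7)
The plan is to reduce the estimate to the hypothesis by first discretizing $\widehat\zeta$ via the Littlewood--Paley partition of unity, and then absorbing the factor $\Phi(2^j\cdot)$ using the scale-invariance of Mikhlin's condition.

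First, since $\widehat\zeta$ is supported in some annulus $\{r_1\le |\xi|\le r_2\}$ bounded away from $0$ and $\infty$, only finitely many of the dilates $\widehat\psi(2^{-l}\cdot)$ are nonzero on its support. Let $F\subset\mathbb Z$ be the finite set of such $l$, so that $\widehat\zeta=\sum_{l\in F}\widehat\zeta\,\widehat\psi(2^{-l}\cdot)$. By the triangle inequality it suffices to bound, for each fixed $l\in F$ and uniformly in $j\in\mathbb Z$, the quantity
$$\bigl\|\sigma(2^j\cdot)\Phi(2^j\cdot)\widehat\zeta\,\widehat\psi(2^{-l}\cdot)\bigr\|_{W^{(s_1,\ldots,s_m)}}.$$
Rescaling $\xi=2^l\eta$ modifies this norm by a multiplicative constant depending only on $F$ and $s_1,\ldots,s_m$ (tracking the Jacobian of $y\mapsto 2^{-l}y$ and using $\left<2^l y_i\right>^{s_i}\le 2^{|l|s_i}\left<y_i\right>^{s_i}$, valid because $|l|$ is bounded), so matters reduce to estimating
$$\bigl\|\sigma(2^{j+l}\cdot)\widehat\psi\cdot m_{j,l}\bigr\|_{W^{(s_1,\ldots,s_m)}}, \qquad m_{j,l}(\eta):=\Phi(2^{j+l}\eta)\widehat\zeta(2^l\eta).$$

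The heart of the argument is the claim that multiplication by $m_{j,l}$ acts boundedly on $W^{(s_1,\ldots,s_m)}$ with constant uniform in $j$. On the Fourier side this is a convolution estimate: from $\prod_i\left<y_i\right>^{s_i}\le \prod_i\left<y_i-z_i\right>^{s_i}\left<z_i\right>^{s_i}$ and Young's inequality $L^1\ast L^2\hookrightarrow L^2$ one obtains
$$\|m_{j,l}\,h\|_{W^{(s_1,\ldots,s_m)}}\le\Bigl\|\widehat{m_{j,l}}(y)\prod_{i=1}^m\left<y_i\right>^{s_i}\Bigr\|_{L^1(\mathbb R^{mn})}\|h\|_{W^{(s_1,\ldots,s_m)}}.$$
The scale-invariance of Mikhlin's hypothesis gives $|\partial^\alpha_\eta \Phi(2^{j+l}\eta)|\le C_\alpha|\eta|^{-|\alpha|}$ with $C_\alpha$ independent of $j$; since $\widehat\zeta(2^l\cdot)$ is a fixed Schwartz function whose support in $\eta$ avoids the origin (for each fixed $l\in F$), the product $m_{j,l}$ is a Schwartz function whose every Schwartz seminorm is bounded uniformly in $j$. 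Consequently the $L^1$-norm on the right-hand side above is finite and uniform in $j$. Applying the estimate with $h=\sigma(2^{j+l}\cdot)\widehat\psi$, then summing over $l\in F$ and taking the supremum in $j\in\mathbb Z$ yields the claimed inequality.

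The main obstacle is precisely this multiplier boundedness step on the product-type Sobolev space, where uniformity in $j$ is essential; it is here that the Mikhlin hypothesis on $\Phi$ is used, ensuring that any unit-scale localization of $\Phi(2^j\cdot)$ is Schwartz with seminorms independent of $j$, which is what controls the $L^1$-norm of $\widehat{m_{j,l}}\prod_i\left<\cdot_i\right>^{s_i}$ needed in the Young step.
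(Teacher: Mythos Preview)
Your proof is correct. The paper does not supply its own proof of this lemma; it is stated with a citation to Miyachi--Tomita and left unproved, so there is no argument in the paper to compare against directly.

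Your route---decompose $\widehat\zeta$ via the finite overlap with the Littlewood--Paley pieces $\widehat\psi(2^{-l}\cdot)$, rescale to unit scale, and then absorb the multiplier $m_{j,l}=\Phi(2^{j+l}\cdot)\widehat\zeta(2^l\cdot)$---is the natural one. The only comment worth making is on the multiplier step: you bound $\|m_{j,l}\,h\|_{W^{(s_1,\ldots,s_m)}}$ by $\bigl\|\widehat{m_{j,l}}\prod_i\langle\cdot_i\rangle^{s_i}\bigr\|_{L^1}\|h\|_{W^{(s_1,\ldots,s_m)}}$ via Peetre's inequality and Young. This is fine, and in fact does not require $s_i>n/2$ since $m_{j,l}$ is compactly supported with all derivatives bounded uniformly in $j$, so $\widehat{m_{j,l}}$ is Schwartz uniformly in $j$. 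An equally clean alternative, and the one the paper invokes elsewhere (see the proof of Lemma~3.2, citing \cite[Proposition~A.2]{FuTomi}), is the algebra property
\[
\|fg\|_{W^{(s_1,\ldots,s_m)}}\lesssim \|f\|_{W^{(s_1,\ldots,s_m)}}\|g\|_{W^{(s_1,\ldots,s_m)}}\qquad (s_i>\tfrac{n}{2}),
\]
applied with $f=m_{j,l}$ and $g=\sigma(2^{j+l}\cdot)\widehat\psi$; the uniform $C^\infty_c$ control on $m_{j,l}$ then gives $\sup_j\|m_{j,l}\|_{W^{(s_1,\ldots,s_m)}}<\infty$. The two approaches are essentially the same computation, yours being marginally more direct.
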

 Adapting the Calder\'on and Torchinsky  interpolation techniques in the multilinear setting (for  details on this we refer  to \cite[p. 318]{GrMiTo}) allows us to interpolate  between two endpoint estimates for multilinear multiplier operators from a product of some Hardy spaces to Lebesgue spaces.

\begin{thm}[\cite{GrMiTo}]\label{thm:CalTorInterp}
  Let $0<p_i,p_{i,k}\le \infty$ and $s_{i,k}>\frac{n}2$ for $i=1,2$ and $1\le k\le m.$ For $0<\theta<1$, set
  $\frac1{p}= \frac{1-\theta}{p_1}+\frac{\theta}{p_2},$ $\frac1{p_k} = \frac{1-\theta}{p_{1,k}}+\frac{\theta}{p_{2,k}},$ and $s_k = (1-\theta)s_{1,k}+\theta s_{2,k}.$
  Assume that the multilinear operator $T_{\sigma}$ defined in \eqref{equ:TSigmaMul} satisfies the estimates
  $$
  \left\Vert{T_{\sigma}}\right\Vert_{H^{p_{i,1}}\times\cdots\times H^{p_{i,m}}\longrightarrow  L^{p_i}}
  \le C_i\sup_{j\in\mathbb Z}\left\Vert{\sigma(2^j\cdot)\widehat\psi}\right\Vert_{W^{(s_{i,1},\ldots,s_{i,m})}},\qquad (i=1,2).
  $$
  Then
  $$
  \left\Vert{T_{\sigma}}\right\Vert_{H^{p_{1}}\times\cdots\times H^{p_{m}}\longrightarrow  L^{p}}
  \le C\sup_{j\in\mathbb Z}\left\Vert{\sigma(2^j\cdot)\widehat\psi}\right\Vert_{W^{(s_{1},\ldots,s_{m})}}.
  $$
\end{thm}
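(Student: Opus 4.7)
The plan is to realize Theorem \ref{thm:CalTorInterp} as a consequence of a Stein-type complex interpolation argument adapted to Hardy spaces in the spirit of \cite{CalTochin}, carried out in the multilinear framework as in \cite{GrMiTo}. The argument proceeds by constructing an analytic family $\{\sigma^z\}_{z\in S}$ of multipliers on the closed strip $S=\{z:0\le \mathrm{Re}(z)\le 1\}$ so that $\sigma^\theta=\sigma$ and so that $\sigma^z$ satisfies the first endpoint hypothesis on $\mathrm{Re}(z)=0$ and the second endpoint hypothesis on $\mathrm{Re}(z)=1$, and then applying a three-lines principle to the scalar function $z\mapsto T_{\sigma^z}$ tested on atoms.

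First I would set $\gamma_k(z)=(1-z)s_{1,k}+z s_{2,k}$, so $\gamma_k(\theta)=s_k$ and $\mathrm{Re}\,\gamma_k(z)=s_{i,k}$ when $\mathrm{Re}(z)=i$. For each $j\in\mathbb Z$ let $\sigma_j(\eta)=\sigma(2^j\eta)\widehat\psi(\eta)$ and define
\[
\tau_j^z(\eta)\;=\;\widehat{\Psi}(\eta)\,\mathcal{F}^{-1}\!\Big[\,\prod_{k=1}^m \langle y_k\rangle^{s_k-\gamma_k(z)}\,\widehat{\sigma_j}(y)\,\Big](\eta),\qquad \sigma^z(\xi)=\sum_{j\in\mathbb Z}\tau_j^z(2^{-j}\xi),
\]
where $\widehat\Psi$ is a smooth bump equal to $1$ on $\mathrm{supp}\,\widehat\psi$ and supported in a slightly larger annulus. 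Using the identity $|\langle y_k\rangle^{s_k-\gamma_k(z)}|^2=\langle y_k\rangle^{2(s_k-\mathrm{Re}\,\gamma_k(z))}$ and Plancherel, the weighted Sobolev norm of $\tau_j^z$ with weight $\prod_k\langle y_k\rangle^{\mathrm{Re}\,\gamma_k(z)}$ equals (up to a constant depending on $\Psi$ via Lemma \ref{lem:ProdSobN}) the original $W^{(s_1,\dots,s_m)}$-norm of $\sigma_j$. The reassembled multiplier $\sigma^z$ is then shown, again via Lemma \ref{lem:ProdSobN} applied to the renormalizations caused by the cutoffs $\widehat\psi$ and $\widehat\Psi$, to satisfy
\[
\sup_{j\in\mathbb Z}\|\sigma^z(2^j\cdot)\widehat\psi\|_{W^{(s_{i,1},\dots,s_{i,m})}}\le C\,(1+|\mathrm{Im}\,z|)^N\,A\qquad \text{on } \mathrm{Re}(z)=i,
\]
with admissible polynomial growth in $\mathrm{Im}\,z$. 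Combined with the two endpoint hypotheses this gives $\|T_{\sigma^z}\|_{H^{p_{i,1}}\times\cdots\times H^{p_{i,m}}\to L^{p_i}}\le C_i(1+|\mathrm{Im}\,z|)^N A$ on the two vertical sides of $S$.

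For the interpolation itself, I would fix test inputs $\vec f=(f_1,\dots,f_m)$ where each $f_k$ is a finite linear combination of $L^\infty$-atoms of $H^{p_k}$ (or a bounded compactly supported function when $p_k>1$), and consider the analytic family $z\mapsto T_{\sigma^z}(\vec f)$. When $p>1$ one pairs against $g\in L^{p'}$ to form the holomorphic scalar function $F(z)=\langle T_{\sigma^z}(\vec f),g\rangle$, to which Hirschman's three-lines lemma applies directly. When $p\le 1$ one instead uses the atomic/quasi-convexity form of Calderón–Torchinsky interpolation: one verifies that $\|T_{\sigma^z}(\vec f)\|_{L^{p_i}}$ has the required boundary control on each vertical line and then deduces the intermediate $L^p$-bound by a convexity argument on $|T_{\sigma^z}(\vec f)(x)|^p$; the required analyticity and distributional interpretation of $T_{\sigma^z}$ on atoms are supplied by the regularization procedure developed in Section 3 of the paper. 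Since $\sigma^\theta=\sigma$, the bound at $z=\theta$ is exactly the desired one, and a standard density argument extends it to all of $H^{p_1}\times\cdots\times H^{p_m}$.

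The main obstacle is the construction of $\sigma^z$: the Fourier weights $\prod_k\langle y_k\rangle^{s_k-\gamma_k(z)}$ are not compactly supported, so they do not preserve the dyadic support of $\sigma_j$, and one must reinsert cutoffs like $\widehat\Psi$ without damaging either the identity $\sigma^\theta=\sigma$ or the uniform-in-$j$ control of $\|\sigma^z(2^j\cdot)\widehat\psi\|_{W^{(\mathrm{Re}\,\gamma(z))}}$. This is exactly the place where the hypothesis $s_{i,k}>n/2$ and Lemma \ref{lem:ProdSobN} are essential, since they allow Mikhlin-type factors and smooth truncations to be absorbed with only a harmless constant loss, uniformly across dyadic scales and uniformly (with polynomial growth) in $\mathrm{Im}\,z$.
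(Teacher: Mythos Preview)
Your construction of the analytic family $\sigma^z$ and the verification of the boundary Sobolev bounds are the correct ingredients, and this is precisely the Calder\'on--Torchinsky scheme the paper invokes (the paper does not supply a proof but cites \cite[p.~318]{GrMiTo}). However, the interpolation step as you describe it has a genuine gap. You fix test inputs $f_k\in H^{p_k}$ and then appeal to the boundary hypotheses on $\mathrm{Re}(z)=i$; but those hypotheses bound $\|T_{\sigma^z}(\vec f)\|_{L^{p_i}}$ in terms of $\prod_k\|f_k\|_{H^{p_{i,k}}}$, not $\prod_k\|f_k\|_{H^{p_k}}$. Since the exponents $p_{i,k}$ differ from $p_k$, an $H^{p_k}$-normalized atom has $H^{p_{i,k}}$-norm comparable to $|Q_k|^{1/p_k-1/p_{i,k}}$, which is not uniformly bounded over cubes. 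Consequently no three-lines or subharmonicity argument applied to $z\mapsto T_{\sigma^z}(\vec f)$ with $\vec f$ \emph{fixed} can recover the intermediate bound with the correct $H^{p_k}$-norms; the same issue arises on the target side since $p_1\neq p_2$ in general.

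The remedy, which is the essential point of the Calder\'on--Torchinsky method, is to also make the test data analytic in $z$: starting from an $H^{p_k}$-atom $a_k$ supported in $Q_k$, one forms $a_k^z$ (for instance by rescaling by $|Q_k|^{1/p_k-1/p_k(z)}$, where $1/p_k(z)=(1-z)/p_{1,k}+z/p_{2,k}$, and arranging enough vanishing moments) so that $a_k^z$ is, up to a uniform constant, an $H^{p_{i,k}}$-atom on $\mathrm{Re}(z)=i$ and $a_k^\theta=a_k$. One similarly chooses an analytic family $g^z$ on the dual side. The three-lines lemma (or its $p\le 1$ variant via subharmonicity of $|F|^p$) is then applied to $z\mapsto \int T_{\sigma^z}(a_1^z,\dots,a_m^z)\,g^z$. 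Your phrase ``convexity argument on $|T_{\sigma^z}(\vec f)(x)|^p$'' does not supply this; without the analytic variation of the atoms the argument does not close.
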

The following result is due to  Fujita
 and Tomita \cite{FuTomi} for
 $2 < p <\infty$, while the extension to $p>1$ and the endpoint case
where all but one indices  are equal to infinity is due to Grafakos, Miyachi and Tomita \cite{GrMiTo}.

\begin{thm}[\cite{FuTomi,GrMiTo}]\label{thm:PLarge}
  Let $1<p_1,\ldots,p_m\le \infty, 1<p<\infty$ and $\frac1{p_1}+\cdots+\frac1{p_m}=\frac1{p}.$ If $\sigma$ satisfies \eqref{equ:SigmCond}, then the multilinear multiplier operator $T_\sigma$ is bounded from $L^{p_1}\times\cdots\times L^{p_m}\longrightarrow  L^p$ with constant at most a multiple of $A.$
 \end{thm}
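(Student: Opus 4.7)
The plan is to reduce to a pointwise maximal-function estimate via a Littlewood--Paley decomposition, and then sum the pieces through a combination of H\"older's inequality, the Fefferman--Stein vector-valued maximal inequality, and (in borderline ranges of the $p_k$) the Calder\'on--Torchinsky interpolation of Theorem~\ref{thm:CalTorInterp}. I decompose $\sigma=\sum_{j\in\mathbb Z}\sigma_j$ with $\sigma_j(\vec\xi)=\sigma(\vec\xi)\widehat\psi(2^{-j}\vec\xi)$, so that each rescaled piece $\widetilde\sigma_j(\vec\eta):=\sigma(2^j\vec\eta)\widehat\psi(\vec\eta)$ is supported in $\tfrac12\le|\vec\eta|\le 2$ and satisfies $\|\widetilde\sigma_j\|_{W^{(s_1,\ldots,s_m)}}\le A$ uniformly in $j$.

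Writing the kernel representation
\begin{equation*}
T_{\sigma_j}(f_1,\ldots,f_m)(x)=\int_{\mathbb R^{mn}}\widetilde K_j(\vec w)\prod_{k=1}^m(\Pi_j^{(k)}f_k)(x-2^{-j}w_k)\,d\vec w,
\end{equation*}
where $\widetilde K_j=\widetilde\sigma_j\spcheck$ and each $\Pi_j^{(k)}$ is a smooth cutoff to $|\xi_k|\lesssim 2^j$ that equals $1$ on the $\xi_k$-projection of $\supp\sigma_j$, I would apply Cauchy--Schwarz in $d\vec w$ with the weight $\prod_k\langle w_k\rangle^{s_k}$; since $s_k>n/2$ makes $\langle\cdot\rangle^{-2s_k}$ integrable on $\mathbb R^n$, after the substitution $v_k=2^{-j}w_k$ the resulting convolutional weight is an approximate identity of constant $L^1$-norm, which yields the pointwise bound
\begin{equation*}
|T_{\sigma_j}(f_1,\ldots,f_m)(x)|^2\lesssim A^2\prod_{k=1}^m M\bigl(|\Pi_j^{(k)}f_k|^2\bigr)(x).
\end{equation*}
To sum in $j$, I would further split $\sigma_j=\sum_{k_0=1}^m\sigma_j^{(k_0)}$ according to which component $|\xi_{k_0}|$ is largest on its support, so that $f_{k_0}$ can be replaced by the genuine Littlewood--Paley piece $\Delta_j^{(k_0)}f_{k_0}$ and the output $T_{\sigma_j^{(k_0)}}(f_1,\ldots,f_m)$ becomes Fourier-localized in $|\eta|\approx 2^j$, while the other arguments sit inside low-pass cutoffs dominated pointwise by $Mf_k$. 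Applying the $L^p$ Littlewood--Paley square-function characterization (valid for $1<p<\infty$) to the frequency-localized outputs, combined with H\"older's inequality and the Fefferman--Stein vector-valued maximal inequality, converts the pointwise estimate into $\|T_\sigma(f_1,\ldots,f_m)\|_{L^p}\lesssim A\prod_k\|f_k\|_{L^{p_k}}$.

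The main obstacle is carrying this argument through uniformly over all indices $1<p_k\le\infty$: for $p_k>2$ the Cauchy--Schwarz/Fefferman--Stein combination is clean, while for $p_k\in(1,2]$ the $L^2$-based maximal function $M(|\cdot|^2)^{1/2}$ fails to be $L^{p_k}$-bounded, so one must replace the Cauchy--Schwarz step by a H\"older inequality with exponent $r_0\in(\max_k n/s_k,\,2)$, invoking Lemma~\ref{lem:LInfL2} to interpolate the weighted kernel estimate between the $L^2$ and $L^\infty$ norms. The endpoint $p_k=\infty$ is handled by peeling off that factor via the trivial bound $\|\Pi_j^{(k)}f_k\|_{L^\infty}\lesssim\|f_k\|_{L^\infty}$. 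To consolidate all of these cases, the cleanest strategy is to first establish the $m$ ``Hardy-type'' endpoint estimates in which exactly one $p_{k_0}$ is finite and the others are infinite, and then recover the full range of tuples $(p_1,\ldots,p_m)$ by the multilinear Calder\'on--Torchinsky interpolation of Theorem~\ref{thm:CalTorInterp}.
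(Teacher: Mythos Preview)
The paper does not give its own proof of Theorem~\ref{thm:PLarge}; it is quoted from \cite{FuTomi} (for $p>2$) and \cite{GrMiTo} (for $1<p\le 2$ and for the endpoint tuples in $\Lambda_p$), and is used as a black box thereafter. So there is no argument in the paper to compare against. Your sketch is in the spirit of those references, and your concluding ``cleanest strategy''---prove the $m$ endpoint estimates with a single finite index and then invoke Theorem~\ref{thm:CalTorInterp}---is exactly the route of \cite{GrMiTo}.

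There is, however, a genuine gap in the middle of your outline. After the split $\sigma_j=\sum_{k_0}\sigma_j^{(k_0)}$ according to which $|\xi_{k_0}|$ is largest, you assert that the output $T_{\sigma_j^{(k_0)}}(f_1,\ldots,f_m)$ is Fourier-localized in $|\eta|\approx 2^j$. This is false: on the support of $\sigma_j^{(k_0)}$ one only knows $|\xi_{k_0}|\approx 2^j$ and $|\xi_i|\lesssim 2^j$ for $i\ne k_0$, so the output frequency $\eta=\xi_1+\cdots+\xi_m$ satisfies merely $|\eta|\lesssim 2^j$. On the ``diagonal'' region where two or more of the $|\xi_i|$ are comparable to $2^j$, cancellation can drive $|\eta|$ arbitrarily far below $2^j$; the pieces $T_{\sigma_j^{(k_0)}}(\cdots)$ are therefore not almost-orthogonal in $j$, and the Littlewood--Paley square-function bound on the output does not apply. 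The cited references handle this either by a further paraproduct-type decomposition that separates the genuinely off-diagonal region (where $|\eta|\approx 2^j$ does hold) from the diagonal region (where two inputs simultaneously carry genuine Littlewood--Paley pieces and a different summation argument is needed), or by bypassing the issue altogether via the endpoint-plus-interpolation scheme. Your last paragraph lands on the right plan, but the square-function step as you wrote it would fail without that additional decomposition.
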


Finally, we will need the following lemma from \cite{GrKal}.

\begin{lem}[{\cite[Lemma 2.1]{GrKal}}]\label{lem:LpAvg}
  Let $0<p\le 1$ and let $\Big(f_Q\Big)_{Q\in \mathcal J}$ be a family of nonnegative integrable functions with $\supp(f_Q)\subset Q$ for all $Q\in\mathcal{J},$ where $\mathcal{J}$ is a family of finite or countable cubes in $\mathbb R^n.$ Then we have
  $$
  \left\Vert{\sum_{Q\in\mathcal{J}}f_Q}\right\Vert_{L^p}\lesssim \left\Vert{\sum_{Q\in\mathcal{J}}\Big(\frac1{{\left\vert{Q}\right\vert}}\int_Qf_Q(x)dx\Big) \chi_{Q^*}}\right\Vert_{L^p},
  $$
  with the implicit constant depending only on $p.$
\end{lem}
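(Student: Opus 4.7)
The strategy is to combine Jensen's inequality applied to each individual $f_Q$ with the $p$-subadditivity of $\|\cdot\|_{L^p}^p$ valid for $0<p\le 1$, and then to convert the resulting cube-by-cube bound into an estimate against the aggregated sum on the right-hand side via a geometric covering argument. Since $t\mapsto t^p$ is concave on $[0,\infty)$ for $p\le 1$, Jensen's inequality gives, for every $Q\in\mathcal{J}$,
\[
\int_Q f_Q(x)^p\,dx\ \le\ |Q|\Big(\tfrac{1}{|Q|}\int_Q f_Q\Big)^p\ =\ |Q|\,c_Q^{\,p},
\]
where $c_Q:=\tfrac{1}{|Q|}\int_Q f_Q$. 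At the same time, the elementary pointwise inequality $(a_1+a_2+\cdots)^p\le a_1^{\,p}+a_2^{\,p}+\cdots$ for $p\in(0,1]$ and nonnegative $a_i$'s, applied to $a_Q=f_Q(x)$ and integrated, yields
\[
\Big\|\sum_{Q\in\mathcal{J}} f_Q\Big\|_{L^p}^{\,p}\ \le\ \sum_{Q\in\mathcal{J}}\int f_Q^{\,p}\ \le\ \sum_{Q\in\mathcal{J}} c_Q^{\,p}|Q|.
\]

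To finish, one must dominate the ``flat'' sum $\sum_{Q} c_Q^{\,p}|Q|$ by $\big\|\sum_{Q} c_Q\chi_{Q^*}\big\|_{L^p}^{\,p}$. The plan is to stratify $\mathcal{J}$ first by dyadic scale and then, within each scale, to invoke a Vitali-Whitney-Besicovitch type selection that partitions the cubes into $N=N(n)$ subfamilies $\mathcal{J}_j$ on which the enlarged cubes $\{Q^*\}_{Q\in\mathcal{J}_j}$ have bounded overlap. Within such a bounded-overlap subfamily the reverse comparison $\max_{i=1,\dots,M} a_i^{\,p}\ge \tfrac{1}{M}\sum_{i=1}^M a_i^{\,p}$ (valid for any $M$ terms) gives the pointwise bound
\[
\Big(\sum_{Q\in\mathcal{J}_j} c_Q\chi_{Q^*}(x)\Big)^{p}\ \gtrsim\ \sum_{Q\in\mathcal{J}_j} c_Q^{\,p}\chi_{Q^*}(x),
\]
which, after integration and the trivial estimate $|Q|\le |Q^*|$, yields $\sum_{Q\in\mathcal{J}_j} c_Q^{\,p}|Q|\lesssim \big\|\sum_{Q\in\mathcal{J}_j} c_Q\chi_{Q^*}\big\|_{L^p}^{\,p}\le \big\|\sum_{Q\in\mathcal{J}} c_Q\chi_{Q^*}\big\|_{L^p}^{\,p}$. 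Summing the finitely many scale/subfamily bounds closes the proof.

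The main obstacle is the last step. Applied crudely, the $p$-subadditivity bound $\sum_Q c_Q^{\,p}|Q|$ from the first paragraph is quite lossy in the presence of many overlapping cubes of similar size (a toy case being many distinct translates of a single $Q$), whereas the right-hand side $\big\|\sum_Q c_Q\chi_{Q^*}\big\|_{L^p}^{\,p}$ is itself concave in the overlaps through the power $p<1$; the geometric decomposition must therefore be performed carefully, controlling both the number of subfamilies and the uniformity of overlap, so that the loss from subadditivity is exactly recovered by the gain from the $p$-th-power of the superposition. This dimensional Vitali-Besicovitch bookkeeping, combined with the stratification by dyadic scale, is the heart of the argument.
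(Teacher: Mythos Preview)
The paper does not give its own proof of this lemma; it is quoted verbatim from \cite{GrKal}. So there is no in-paper argument to compare against, and the question reduces to whether your plan actually works. It does not: the step you yourself flag as ``the main obstacle'' is not merely delicate, it is false as stated, and the covering scheme you outline cannot rescue it.

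Concretely, the inequality you need in the last step is
\[
\sum_{Q\in\mathcal J} c_Q^{\,p}\,|Q|\ \lesssim\ \Big\|\sum_{Q\in\mathcal J} c_Q\,\chi_{Q^*}\Big\|_{L^p}^{\,p},
\]
and this fails already for the simplest overlapping family. Take $\mathcal J$ to consist of $N$ unit cubes with a common center (or, if you prefer distinct cubes, with centers within distance $1/100$ of each other) and set $c_Q=1$ for each. The left side equals $N$, while the right side is $\|N\chi_{Q^*}\|_{L^p}^{p}\approx N^{p}$; for $p<1$ this blows up as $N\to\infty$. Note that the \emph{lemma itself} holds in this example (both sides of the original inequality scale like $N$), so the failure is created by your first step: applying $p$-subadditivity termwise discards exactly the overlap structure that makes the right-hand side large, and no downstream geometry can recover a factor $N^{1-p}$.

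Your proposed repair---partition each dyadic scale into $N(n)$ subfamilies whose enlarged cubes have bounded overlap---is impossible for arbitrary $\mathcal J$: at a fixed scale one may have $N$ cubes all overlapping at a point, so any partition into $K$ classes leaves some class with overlap $\ge N/K$. Besicovitch-type lemmas produce bounded-overlap \emph{subcovers}, not partitions of a prescribed family. And even granting a miracle at each scale, you would obtain one copy of $\|G\|_{L^p}^{p}$ per scale and there are infinitely many scales, so ``summing the finitely many scale/subfamily bounds'' is not available either. The upshot is that the route through the flat sum $\sum_Q c_Q^{p}|Q|$ is a dead end; a correct argument must compare $\sum_Q f_Q$ to $\sum_Q c_Q\chi_{Q^*}$ without first decoupling the terms via $(\sum a_Q)^p\le\sum a_Q^{p}$.
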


\section{Regularization the multiplier}
In this section, we   show that the operator defined in \eqref{equ:LiMulOp} with enough smoothness of the multiplier can be approximated by a family of very nice operators.

\begin{thm}\label{thm:RegMul}
  Let $\sigma$ be a function on $\mathbb R^{mn}$ and $s_i>\frac{n}2$ for $1\le i\le m$ satisfying \eqref{equ:SigmCond}.
  Then there exists a family of functions $\big(\sigma^{\epsilon}\big)_{0<\epsilon<\frac12}$ such that
    $K^{\epsilon}:=\big(\sigma^{\epsilon}\big)\spcheck$ is smooth and compactly supported for every $0<\epsilon<\frac12;$
    also
    \begin{equation}\label{equ:Eps2Ws}
      \sup_{0<\epsilon<\frac12}\sup_{j\in\mathbb Z}\left\Vert{\sigma^{\epsilon}(2^j\cdot)\widehat\psi}\right\Vert_{W^{(s_1,\ldots,s_m)}}
      \lesssim \sup_{j\in\mathbb Z}\left\Vert{\sigma(2^j\cdot)\widehat\psi}\right\Vert_{W^{(s_1,\ldots,s_m)}},
    \end{equation}
    and
    \begin{equation}\label{equ:T2Approx}
    \lim_{\epsilon\to 0}
    \left\Vert{T_\epsilon(f_1,\ldots,f_m)-T_{\sigma}(f_1,\ldots,f_m)}\right\Vert_{L^2} = 0
    \end{equation}
    for all functions $f_i\in L^{2m},$  $1\le i\le m$, where $T_{\epsilon}$ are multilinear singular integral operators of convolution type associated to $K^{\epsilon}.$
\end{thm}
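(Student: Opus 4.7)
The approach is to regularize $\sigma$ by a two-stage procedure: a mollification in the space variable that smooths $K=\sigma^\vee$, followed by a spatial cutoff that makes the resulting kernel compactly supported. Fix $\chi\in C_c^\infty(\mathbb{R}^{mn})$ with $\chi(0)=1$ and $\phi\in\mathcal{S}(\mathbb{R}^{mn})$ with $\widehat\phi\in C_c^\infty$ and $\widehat\phi(0)=1$, put $\phi_\epsilon(x)=\epsilon^{-mn}\phi(x/\epsilon)$, and define
\[
K^\epsilon(x):=\chi(\epsilon x)\,(K*\phi_\epsilon)(x),\qquad \sigma^\epsilon:=\widehat{K^\epsilon}=\bigl[\epsilon^{-mn}\widehat\chi(\cdot/\epsilon)\bigr]*\bigl[\widehat\phi(\epsilon\,\cdot)\,\sigma\bigr].
\]
Smoothness and compact support of $K^\epsilon$ are immediate: $K*\phi_\epsilon$ is $C^\infty$ since $\phi_\epsilon\in\mathcal{S}$ and $K\in\mathcal{S}'$ (using $\sigma\in L^\infty$ from the Remark), and multiplying by the $C_c^\infty$ bump $\chi(\epsilon\,\cdot)$ preserves smoothness while localizing support to $\{|x|\lesssim 1/\epsilon\}$.

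For \eqref{equ:Eps2Ws}, I split $\sigma^\epsilon=\widehat\phi(\epsilon\,\cdot)\sigma+R_\epsilon$ where $R_\epsilon=[\epsilon^{-mn}\widehat\chi(\cdot/\epsilon)-\delta_0]*[\widehat\phi(\epsilon\,\cdot)\sigma]$. The principal term is handled by Lemma \ref{lem:ProdSobN} applied with $\Phi=\widehat\phi(\epsilon\,\cdot)$: the Schwartz decay of $\widehat\phi$ gives $|\partial^\alpha_\xi\widehat\phi(\epsilon\xi)|=\epsilon^{|\alpha|}|(\partial^\alpha\widehat\phi)(\epsilon\xi)|\le C_\alpha\,\epsilon^{|\alpha|}(1+\epsilon|\xi|)^{-|\alpha|}\le C_\alpha|\xi|^{-|\alpha|}$ uniformly in $\epsilon>0$, so the lemma yields $\sup_{j,\epsilon}\|\sigma(2^j\cdot)\widehat\phi(\epsilon 2^j\cdot)\widehat\psi\|_{W^{(s_1,\ldots,s_m)}}\lesssim A$. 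For $R_\epsilon(2^j\cdot)\widehat\psi$, rescaling $\eta\mapsto 2^j\eta$ in the defining convolution exposes $\epsilon 2^j$ as the natural scale, and one splits into regimes: when $\epsilon 2^j\lesssim 1$ the approximate identity $\epsilon^{-mn}\widehat\chi(\cdot/\epsilon)$ smears $\widehat\phi(\epsilon\,\cdot)\sigma$ at a scale finer than the unit annulus, so the smearing can be absorbed into a slightly enlarged annular cutoff $\widehat{\tilde\psi}$ and Lemma \ref{lem:ProdSobN} reapplied; when $\epsilon 2^j\gtrsim 1$ the Schwartz decay of $\widehat\phi$ forces $\widehat\phi(\epsilon 2^j\xi)$ to be rapidly small on $\supp(\widehat\psi)$, so both $R_\epsilon$ and the principal term are negligible in any Sobolev norm on that annulus.

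For \eqref{equ:T2Approx}, note $\|\sigma^\epsilon\|_{L^\infty}\le\|\widehat\chi\|_{L^1}\|\widehat\phi\|_{L^\infty}\|\sigma\|_{L^\infty}$ uniformly, and $\sigma^\epsilon\to\sigma$ almost everywhere by approximate-identity convergence at Lebesgue points of $\sigma$. For Schwartz $f_1,\ldots,f_m$, Plancherel reduces the task to $L^2$ convergence in $\eta$ of
\[
\widehat{T_{\sigma^\epsilon-\sigma}(f_1,\ldots,f_m)}(\eta)=\int_{\mathbb{R}^{(m-1)n}}(\sigma^\epsilon-\sigma)\bigl(\xi_1,\ldots,\xi_{m-1},\eta-{\textstyle\sum_i}\xi_i\bigr)\prod_{i=1}^m\widehat{f_i}(\xi_i)\,d\xi_1\cdots d\xi_{m-1}
\]
(with $\xi_m=\eta-\sum_{i<m}\xi_i$). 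Dominated convergence in $(\xi_1,\ldots,\xi_{m-1})$, with dominator $2\|\sigma\|_\infty\prod|\widehat{f_i}|$, gives pointwise convergence to zero for every $\eta$; a second dominated convergence in $\eta$, with the dominator $2\|\sigma\|_\infty\,|\widehat{f_1}|*\cdots*|\widehat{f_m}|\in L^2(\mathbb{R}^n)$ (Young's inequality, using $\widehat{f_i}\in L^{2m/(2m-1)}$), upgrades this to $L^2(d\eta)$ convergence. Applying Theorem \ref{thm:PLarge} to each $\sigma^\epsilon$ with the uniform Sobolev bound from the previous step gives $\sup_\epsilon\|T_\epsilon\|_{L^{2m}\times\cdots\times L^{2m}\to L^2}<\infty$, and the usual density argument for $\mathcal{S}\subset L^{2m}$ extends the convergence to all $f_i\in L^{2m}$.

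The main obstacle is the remainder estimate for $R_\epsilon$ in the Sobolev bound: the product-type anisotropic structure of $W^{(s_1,\ldots,s_m)}$ together with the two independent scales $\epsilon$ and $2^j$ defeats any naive scale-invariance argument and forces the case split on $\epsilon 2^j$, requiring careful bookkeeping of how the smoothness of $\widehat\chi$ and the decay of $\widehat\phi$ interact with the weights $\prod\langle y_i\rangle^{s_i}$ after rescaling; everything else is routine once the construction is in place.
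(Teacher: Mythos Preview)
Your construction is essentially the paper's, written from the kernel side: your $\epsilon^{-mn}\widehat\chi(\cdot/\epsilon)$ is the paper's mollifier $\varphi_\epsilon$ and your $\widehat\phi(\epsilon\,\cdot)$ plays the role of the paper's frequency cutoff $\phi^\epsilon$ (the paper cuts to an annulus rather than a ball, but this is immaterial for Lemma~\ref{lem:ProdSobN}). The $L^2$ convergence argument is also the same as the paper's.

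The genuine gap is in your Sobolev bound for $R_\epsilon$. After rescaling, the convolution $[\epsilon^{-mn}\widehat\chi(\cdot/\epsilon)]\ast(\cdot)$ becomes convolution at scale $\epsilon\,2^{-j}$, \emph{not} $\epsilon\,2^{j}$; the parameter $\epsilon\,2^j$ you isolate governs only the size of $\widehat\phi(\epsilon\,2^j\cdot)$ on the unit annulus. In the regime $2^j\lesssim\epsilon$ (hence $\epsilon\,2^j\lesssim\epsilon^2\lesssim 1$, so you are in your first case) the rescaled mollification scale $\epsilon\,2^{-j}\gtrsim 1$ is large, the smearing spreads over many dyadic annuli, and there is no ``slightly enlarged annular cutoff $\widehat{\tilde\psi}$'' into which it can be absorbed. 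Your second case does not help here either, since $\widehat\phi(\epsilon\,2^j\cdot)\equiv 1$ on $\supp\widehat\psi$ when $\epsilon\,2^j$ is small. The subtraction of $\delta_0$ in $R_\epsilon$ buys nothing in this regime.

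This is exactly the content of Lemma~\ref{lem:RegSobN}, which the paper isolates and proves separately: one decomposes $\sigma$ inside the convolution into its Littlewood--Paley pieces $\sigma(2^j(x-y))\widehat\psi(2^{-k}(x-y))$ and treats $k\le -3$, $|k|\le 2$, $k\ge 3$ by different mechanisms; the far pieces ($k\ge 3$, corresponding precisely to $\epsilon\,2^{-j}\gg 1$) are summed using the Schwartz decay of the mollifier in the physical variable, not by any enlargement of the cutoff. Your sketch needs this argument (or an equivalent) to close.
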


The following lemma, whose proof will be given in the last section, is the first step in constructing such a family of functions $\sigma^{\epsilon}$ as stated in Theorem \ref{thm:RegMul}.
\begin{lem}\label{lem:RegSobN}
    Let $\varphi$ be a Schwartz function. Suppose $\sigma$ is a function on $\mathbb R^{mn}$ satisfying \eqref{equ:SigmCond} for $s_i>\frac{n}2.$ Then we have
    $$
    \sup_{\epsilon>0}\sup_{j\in\mathbb Z}\left\Vert{[(\varphi_\epsilon*\sigma)(2^j\cdot)]\widehat\psi}\right\Vert_{W^{(s_1,\ldots,s_m)}} \lesssim \sup_{j\in \mathbb Z}\norm{\sigma(2^j\cdot)\widehat{\psi}}_{W^{(s_1,\ldots,s_m)}}
    $$
    where $\varphi_{\epsilon}(x_1,\ldots,x_m) = \epsilon^{-mn}\varphi(\epsilon^{-1}x_1,\ldots,\epsilon^{-1}x_m)$ for all $x_i\in\mathbb R^{n}, 1\le i\le m.$
\end{lem}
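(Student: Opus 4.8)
The plan is to reduce to the scale $j=0$ and then split $\sigma$ into a part living near the annulus $\{1/2\le|\xi|\le2\}=\supp\widehat\psi$ and a far part, estimating each by elementary means. Throughout I abbreviate $W=W^{(s_1,\ldots,s_m)}$ and write $w(y)=\langle y_1\rangle^{s_1}\cdots\langle y_m\rangle^{s_m}$, so that $\|h\|_{W}=\|\widehat h\,w\|_{L^2}$. First I would reduce: a change of variables gives $(\varphi_\epsilon*\sigma)(2^j\cdot)=\varphi_{2^{-j}\epsilon}*\bigl(\sigma(2^j\cdot)\bigr)$, and since $\sigma(2^j\cdot)$ again satisfies \eqref{equ:SigmCond} with the same constant $A$ while $2^{-j}\epsilon$ ranges over all of $(0,\infty)$, it suffices to prove
\[
\sup_{\delta>0}\bigl\|(\varphi_\delta*\sigma)\widehat\psi\bigr\|_{W}\lesssim A
\]
for an arbitrary $\sigma$ satisfying \eqref{equ:SigmCond}, with implied constant independent of $\sigma$.

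Two elementary facts will be used throughout. First, $\|\varphi_\delta*h\|_{W}\le\|\varphi\|_{L^1}\|h\|_{W}$ for every $h$ and every $\delta>0$, because $\widehat{\varphi_\delta*h}=\widehat\varphi(\delta\cdot)\,\widehat h$ and $\|\widehat\varphi(\delta\cdot)\|_{L^\infty}\le\|\varphi\|_{L^1}$. Second, $\|hg\|_{W}\lesssim\|w\,\widehat g\|_{L^1}\,\|h\|_{W}$, which follows from $\widehat{hg}=\widehat h*\widehat g$, Peetre's inequality $w(y)\lesssim w(y-z)\,w(z)$, and Young's inequality. I will also use that $\sigma\in L^\infty$ with $\|\sigma\|_{L^\infty}\lesssim A$ (the Remark), and that $\|\cdot\|_{W}$ is stable, up to a fixed constant, under dilations by factors lying in a fixed compact subset of $(0,\infty)$. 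The first fact cannot be applied to $\sigma$ itself, since $\sigma\notin W$ in general; that is exactly why a decomposition is needed.

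Now fix a smooth $\eta$ on $\mathbb R^{mn}$ with $\eta\equiv1$ on $\{1/4\le|\xi|\le4\}$ and $\supp\eta\subset\{1/8\le|\xi|\le8\}$, and write $\sigma=\sigma\eta+\sigma(1-\eta)$. For the near piece, on $\supp\eta$ only the at most nine Littlewood--Paley components $\sigma\,\widehat\psi(2^{-k}\cdot)$ with $|k|\le4$ are nonzero; each of them equals $\bigl(\sigma(2^{k}\cdot)\widehat\psi\bigr)(2^{-k}\cdot)$ and hence has $W$-norm $\lesssim A$. Summing these and applying the second fact (first with $g=\eta$, then with $g=\widehat\psi$) together with the first fact gives $\|(\varphi_\delta*(\sigma\eta))\widehat\psi\|_{W}\lesssim A$, uniformly in $\delta$. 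For the far piece, $\sigma(1-\eta)$ is bounded by $\lesssim A$ and is supported at distance $\ge1/4$ from $\supp\widehat\psi$; using $\partial^\alpha(\varphi_\delta*h)=\delta^{-|\alpha|}(\partial^\alpha\varphi)_\delta*h$ and the Schwartz decay of $\partial^\alpha\varphi$ one checks that
\[
\sup_{1/2\le|\xi|\le2}\bigl|\partial^\alpha\bigl(\varphi_\delta*(\sigma(1-\eta))\bigr)(\xi)\bigr|\le\delta^{-|\alpha|}\,\|\sigma(1-\eta)\|_{L^\infty}\int_{|u|\ge1/(4\delta)}|\partial^\alpha\varphi(u)|\,du\lesssim_{\alpha}A,
\]
uniformly in $\delta>0$. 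Hence $F_\delta:=(\varphi_\delta*(\sigma(1-\eta)))\widehat\psi$ is a smooth function supported in the fixed compact set $\supp\widehat\psi$ with $\|\partial^\alpha F_\delta\|_{L^\infty}\lesssim_{\alpha}A$ for every $\alpha$; integrating by parts gives $|\widehat{F_\delta}(x)|\lesssim_{M}A\langle x\rangle^{-M}$ for each $M$, and taking $M>\sum_{i}s_i+mn/2$ yields $\|F_\delta\|_{W}=\|\widehat{F_\delta}\,w\|_{L^2}\lesssim A$ because then $\|\langle x\rangle^{-M}w\|_{L^2(\mathbb R^{mn})}<\infty$. Adding the near and far contributions finishes the proof.

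The step I expect to be the main obstacle is the uniformity in $\delta$ of the far-piece estimate: when $\delta$ is comparable to the fixed separation $1/4$ there is no gain from the smoothing and one simply notes that the bound is still $O(A)$, whereas the regimes $\delta\to0$ (rapid decay of $\varphi$) and $\delta\to\infty$ (the factor $\delta^{-|\alpha|}$ against $\partial^\alpha\varphi\in L^1$) are immediate. Everything else is routine bookkeeping with the two facts above, the bound $\|\sigma\|_{L^\infty}\lesssim A$, and the dilation stability of $\|\cdot\|_{W}$.
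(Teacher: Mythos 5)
Your argument is correct, and it takes a genuinely different route from the paper's. The paper works directly at each scale $j$: it inserts the Littlewood--Paley decomposition $\sum_k\widehat\psi(2^{-k}(x-y))$ inside the convolution and treats three regimes of $k$ separately — the middle range $|k|\le 2$ by Minkowski's inequality and the product estimate for $W^{(s_1,\ldots,s_m)}$, and the tails $k\le -3$, $k\ge 3$ by moving derivatives of a $W^l$-norm onto the mollifier (producing the factors $(\epsilon 2^{-j})^{-|\alpha|}$, compensated by $|y|^{|\alpha|}$ thanks to the support separation forcing $|y|\gtrsim 1$ resp.\ $|y|\approx 2^k$), with geometric factors $2^{kn/2}$ or the Sobolev-embedding $L^\infty$ bound making the sums over $k$ converge. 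You instead rescale first, via $(\varphi_\epsilon*\sigma)(2^j\cdot)=\varphi_{2^{-j}\epsilon}*\bigl(\sigma(2^j\cdot)\bigr)$, reducing everything to a single scale and a single parameter $\delta$, and then split the multiplier itself as $\sigma\eta+\sigma(1-\eta)$: the near piece is a finite sum of dilated Littlewood--Paley blocks, handled by the trivial bound $\|\varphi_\delta*h\|_{W^{(s_1,\ldots,s_m)}}\le\|\varphi\|_{L^1}\|h\|_{W^{(s_1,\ldots,s_m)}}$ together with your asymmetric product estimate (which, via Peetre, only needs $s_i>0$), while the far piece exploits $\|\sigma\|_{L^\infty}\lesssim A$ and the fixed separation between $\supp(1-\eta)$ and $\supp\widehat\psi$ to get uniform-in-$\delta$ bounds on all derivatives of $(\varphi_\delta*(\sigma(1-\eta)))\widehat\psi$, hence rapid decay of its Fourier transform and the $W^{(s_1,\ldots,s_m)}$ bound. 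What your approach buys is the elimination of the infinite sums over $k$ and of the $(\epsilon2^{-j})^{-|\alpha|}$ bookkeeping, at the cost of invoking the $L^\infty$ bound on $\sigma$ from the Remark (which requires $s_i>n/2$, exactly as the paper's large-$k$ estimate does via the $L^\infty$–Sobolev step); the reduction to $j=0$ is legitimate since $\sigma(2^j\cdot)$ satisfies \eqref{equ:SigmCond} with the same constant, and your uniformity-in-$\delta$ check for the far piece (the three regimes $\delta\to0$, $\delta\approx1$, $\delta\to\infty$) is exactly the point that needed care and is handled correctly.
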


We now start the proof of Theorem \ref{thm:RegMul}.
\begin{proof}[Proof of Theorem \ref{thm:RegMul}]
  Fix $0<\epsilon<\frac12.$ Choose a smooth function $\varphi$  such that $\widehat\varphi$ is supported in the unit ball and $\widehat\varphi(0)=1.$ Denote by $\sigma^\epsilon = \varphi_{\epsilon}*\big(\sigma\phi^{\epsilon}\big),$
  where $\phi^{\epsilon} = \theta(\epsilon^{-1}\cdot) - \theta(\epsilon\cdot)$, and $\theta$ is a smooth function satisfying $\theta(x) = 0$ for all $\abs{x}\le 1$ and $\theta(x) = 1$ for all $\abs{x}\ge 2.$
  We   note that these functions are suitable regularized versions of the multiplier in Theorem \ref{thm:RegMul}. Indeed, let $K^\epsilon = \big(\sigma^\epsilon\big)\spcheck = \big(\sigma\phi^{\epsilon})\spcheck\widehat\varphi(\epsilon(\cdot));$ then, $K^\epsilon$ are smooth functions with compact support for all $0<\epsilon<\frac12.$

 Using the fact that
 $$
| \partial^{\alpha}\phi^{\epsilon}(\xi)| \le C_{\alpha,\theta}\abs{\xi}^{-\alpha},\quad \xi\ne 0,\quad 0<\epsilon<\frac12.
 $$
 Lemma \ref{lem:RegSobN} applied to the function $\sigma\phi^{\epsilon}$ combined with Lemma \ref{lem:ProdSobN} gives
  \begin{align*}
    \sup_{0<\epsilon<\frac12}\sup_{j\in\mathbb Z}
    {\left\Vert\sigma^\epsilon(2^j\cdot)\widehat\psi\right\Vert}_{W^{(s_1,\ldots,s_m)}}\lesssim & \sup_{0<\epsilon<\frac12}\sup_{j\in\mathbb Z}
    \left\Vert{\sigma(2^j\cdot)\phi^{\epsilon}(2^j\cdot)\widehat\psi}
    \right\Vert_{W^{(s_1,\ldots,s_m)}}\\
    \lesssim&
    \sup_{j\in\mathbb Z}
    {\left\Vert\sigma(2^j\cdot)\widehat\psi\right\Vert}_{W^{(s_1,\ldots,s_m)}},
  \end{align*}
  which yields \eqref{equ:Eps2Ws}. Thus, we are left with establishing \eqref{equ:T2Approx}. For $\epsilon>0,$ now recall
      \begin{align*}
        T_\epsilon(f_1,\ldots,f_m)(x) = & \int K^{\epsilon}(x-y_1,\ldots,x-y_m)f_1(y_1)\cdots f_m(y_m)dy\\
        =& \int \sigma^{\epsilon}(\xi_1,\ldots,\xi_m)\widehat{f_1}(\xi_1)\cdots\widehat{f_m}(\xi_m)
        e^{2\pi ix(\xi_1+\cdots+\xi_m)}d\xi.
      \end{align*}
      Involve estimate \eqref{equ:Eps2Ws} with Theorem \ref{thm:PLarge}, we can see that $T_\sigma$ and $T_\epsilon$ are uniformly bounded from $L^{2m}\times \cdots\times L^{2m}\longrightarrow  L^2$ for all $0<\epsilon<\frac12.$ By density, it suffices to verify \eqref{equ:T2Approx} for all functions in the Schwartz class.

      Now fix Schwartz functions $f_i,$ for $ 1\le i\le m .$ The Fourier transform of \linebreak
      $T_\sigma(f_1,\ldots,f_m)$ can be written by
      \begin{align*}
        \int_{\mathbb R^{n(m-1)}}\!\!\!\!\!\!\!\sigma \big(\xi_1,\ldots,\xi_{m-1},\xi - \sum_{l=1}^{m-1}\xi_l\big)
        \widehat{f_1}(\xi_1)\!\cdots\!\widehat{f_{m-1}}(\xi_{m-1}) \widehat{f_m}\big(\xi-\sum_{l=1}^{m-1}\xi_l\big)
        d\xi_1\!\cdots \! d\xi_{m-1}.
      \end{align*}
      Similarly, the Fourier transform of $T_\epsilon(f_1,\ldots,f_m)$ is
      \begin{align*}
        \int_{\mathbb R^{n(m-1)}} \!\!\!\!\!\!\!\! \sigma^{\epsilon}\big(\xi_1,\ldots,\xi_{m-1},\xi - \sum_{l=1}^{m-1}\xi_l\big)
        \widehat{f_1}(\xi_1)\!\cdots\!\widehat{f_{m-1}}(\xi_{m-1}) \widehat{f_m}\big(\xi-\sum_{l=1}^{m-1}\xi_l\big)d\xi_1\!\cdots\! d\xi_{m-1}.
      \end{align*}
      We now claim that $\sigma^{\epsilon}$ converges pointwise to $\sigma.$ Take this claim for granted, we have
      $$
      \Big(T_{\epsilon}(f_1,\ldots,f_m)\Big)\sphat(\xi)\longrightarrow  \Big(T_{\sigma}(f_1,\ldots,f_m)\Big)\sphat(\xi),
      \quad \epsilon\to0
      $$
      for a.e. $\xi\in\mathbb R^n.$ Notice that
      $$
      \left\Vert{T_\epsilon(f_1,\ldots,f_m)-T_{\sigma}(f_1,\ldots,f_m)}\right\Vert_{L^2} =
      \left\Vert{\Big(T_\epsilon(f_1,\ldots,f_m)\Big)\sphat- \Big(T_{\sigma}(f_1,\ldots,f_m)\Big)\sphat{\,}}\, \right\Vert_{L^2}.
      $$
      Since $\left\Vert{\sigma^{\epsilon}}\right\Vert_{L^{\infty}}\lesssim \left\Vert{\sigma}\right\Vert_{L^{\infty}}<\infty$ for all $\epsilon>0,$ Lebesgue's dominated convergence theorem implies that
      $$
      \Big(T_\epsilon(f_1,\ldots,f_m)\Big)\sphat\longrightarrow  \Big(T_{\sigma}(f_1,\ldots,f_m)\Big)\sphat
      \quad \mbox{as}\ \epsilon\to 0
      $$
      in $L^2,$ and this establishes \eqref{equ:T2Approx}.

      It remains to prove the above claim about pointwise convergence of $\sigma^\epsilon$ as $\epsilon\to 0.$ For each $k\in \mathbb Z,$ we want to show that ${\sigma}^{\epsilon}(x)\longrightarrow  {\sigma}(x)$ for a.e. $2^{k}\le{\left\vert{x}\right\vert}\le 2^{k+1}.$ Indeed, let $0<\epsilon<\min\set{2^{2k-2},2^{-\abs{k}-2}}$ be an arbitrarily small positive number. Then we have
\begin{align*}
{\left\vert{{\sigma}^{\epsilon}(x)-{\sigma}(x)}\right\vert}
\le& \int_{{\left\vert{y}\right\vert}
\le \sqrt{\epsilon}}
{\left\vert{\varphi_{\epsilon}(y)}\right\vert}{\left\vert{{\sigma}(x-y)}\right\vert}
\sup_{2^{k}\le {\left\vert{x}\right\vert}\le 2^{k+1}}{\left\vert{\phi}^{\epsilon}(x-y)-1\right\vert}dy\\
&+\int_{{\left\vert{y}\right\vert}\le \sqrt{\epsilon}}{\left\vert{\varphi_{\epsilon}(y)}\right\vert}
{\left\vert{{\sigma}(x-y)-{\sigma}(x)}\right\vert}dy\\
&+\int_{{\left\vert{y}\right\vert}> \sqrt{\epsilon}}{\left\vert{\varphi_{\epsilon}(y)}\right\vert}
{\left\vert{{\sigma}(x-y){\phi}^{\epsilon}(x-y)-{\sigma}(x)}\right\vert}dy.
\end{align*}
The first integral vanishes since $\phi^{\epsilon}(x)=1$ for all $2\epsilon\le\abs{x}\le \frac1{\epsilon}.$
To estimate the second integral, we denote
$$
\widehat{\Psi}(x) = \sum_{\abs{j}\le 2}\widehat{\psi}( 2^{-j}x).$$
Then $\widehat{\Psi}(x) = 1$ for all $\frac14\le{\left\vert{x}\right\vert}\le 4.$ Therefore we have
$$
\widehat{\Psi}(2^{-k}(x-y))=\widehat{\Psi}(2^{-k}x) = 1
$$
for all $2^k\le{\left\vert{x}\right\vert}\le 2^{k+1}$ and ${\left\vert{y}\right\vert}\le 2^{k-1}.$
Now set $\sigma_j(x) = \sigma(2^jx)\widehat\psi(x)$ and estimate
\begin{align*}
 &\hspace{-.2in} \int_{{\left\vert{y}\right\vert}\le \sqrt{\epsilon}}{\left\vert{\varphi_{\epsilon}(y)}\right\vert}
  {\left\vert{{\sigma}(x-y)-{\sigma}(x)}\right\vert}dy \\
  =&\,\, \int_{{\left\vert{y}\right\vert}\le \sqrt{\epsilon}}{\left\vert{\varphi_{\epsilon}(y)}\right\vert}
  {\left\vert{{\sigma}(x-y)\widehat{\Psi}(2^{-k}(x-y))-{\sigma}(x)\widehat{\Psi}(2^{-k}x)}\right\vert}dy\\
  \le&\,\, \left\Vert{\varphi}\right\Vert_{L^1}\sup_{{\left\vert{y}\right\vert}\le \sqrt{\epsilon}}\left\Vert{{\sigma}(\cdot-y)\widehat{\Psi}(2^{-k}(\cdot-y))-{\sigma}\widehat{\Psi}(2^{-k}\cdot)}
  \right\Vert_{L^{\infty}}\\
  \le &\,\,\left\Vert{\varphi}\right\Vert_{L^1} \sum_{j=k-2}^{k+2}\sup_{{\left\vert{y}\right\vert}\le \sqrt{\epsilon}}\left\Vert{{\sigma}_j(\cdot-2^{-j}y)-{\sigma}_j}\right\Vert_{L^{\infty}}\\
  =&\,\,\left\Vert{\varphi}\right\Vert_{L^1} \sum_{j=k-2}^{k+2}\sup_{{\left\vert{y}\right\vert}\le 2^{-j}\sqrt{\epsilon}}\left\Vert{{\sigma}_j(\cdot-y)-{\sigma}_j}\right\Vert_{L^{\infty}}.
\end{align*}
We would like  to show
$$
\lim_{\epsilon\to 0}\sup_{{\left\vert{y}\right\vert}\le 2^{-j}\sqrt{\epsilon}}\left\Vert{{\sigma}_j(\cdot-y)-{\sigma}_j}\right\Vert_{L^{\infty}}=0.
$$
The preceding limit apparently converges to $0$ as $\epsilon\to 0$ because $\sigma_j\in W^{(s_1,\ldots,s_m)}$ for $s_i>\frac{n}{2},$  $1\le i\le m$. The last term is majorized by
$$
C\norm{\sigma}_{L^{\infty}}\int_{{\left\vert{y}\right\vert}\ge \frac1{\sqrt{\epsilon}}}{\left\vert{\varphi(y)}\right\vert}dy,
$$
which tends to $0$ when $\epsilon\to 0.$

Thus ${\sigma}^{\epsilon}(x)\longrightarrow  {\sigma}(x)$ as $\epsilon\to 0$ for a.e. $2^{k}\le{\left\vert{x}\right\vert}\le 2^{k+1}.$ Hence, ${\sigma}^{\epsilon}$ converges to ${\sigma}$ pointwise on $\mathbb R^{mn}.$ Also $\norm{{\sigma}^{\epsilon}}_{L^{\infty}(\mathbb R^{mn})}\lesssim \norm{{\sigma}}_{L^{\infty}(\mathbb R^{mn})}$ uniformly for all $\epsilon>0.$ The proof of Theorem \ref{thm:RegMul} is now complete.
\end{proof}

Now fix a Schwartz function $K.$ We denote the multilinear singular integral operator of convolution type associated with the kernel $K$   by
$$
{ T^K}(f_1,\ldots,f_m)(x) = \int_{\mathbb R^{mn}}K(x-y_1,\ldots,x-y_m)f_1(y_1)\cdots f_m(y_m)dy_1\cdots dy_m.
$$

\begin{prop}\label{prop:NiceMul}
  Let $K$ be a smooth function on $\mathbb R^{mn}$ with compact support.
  Then we have
  $$
  \norm{ {  T^K}  }_{H^{p_1}\times\cdots\times H^{p_m}\longrightarrow  L^p}\le C_K <{\infty}
  $$
  for all $0<p_1,\ldots,p_m,p<\infty$ and $\dfrac1{p}=\dfrac1{p_1}+\cdots+\dfrac1{p_m},$ where
  $T^K$ is the multilinear singular integral operator of convolution type associated with the kernel $K.$
\end{prop}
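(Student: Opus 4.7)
The plan is to decompose the smooth, compactly supported kernel $K$ as an absolutely convergent Fourier series of tensor-product Schwartz kernels, and then reduce the multilinear estimate to the one-dimensional Fefferman--Stein maximal characterization of $H^{p_j}$ applied factor by factor.

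First I would choose $R>0$ with $\supp K\subset[-R,R]^{mn}$ and expand $K$ in a Fourier series on the enlarged torus $[-4R,4R]^{mn}$:
\[
K(z)=\sum_{k\in\mathbb Z^{mn}}c_k\,e^{i\lambda k\cdot z},\qquad \lambda:=\tfrac{\pi}{4R},\ k\cdot z=\sum_{j=1}^m k_j\cdot z_j,
\]
whose coefficients $c_k$ decay faster than any polynomial because $K\in C^\infty_c$. Next, pick a tensor cutoff $\eta(z)=\prod_{j=1}^m\eta_0(z_j)$ with $\eta_0\in C^\infty_c(\mathbb R^n)$ equal to $1$ on $[-R,R]^n$ and supported in $[-2R,2R]^n$. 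Since $\eta\equiv 1$ on $\supp K$, multiplying the Fourier expansion by $\eta$ produces the pointwise identity
\[
K(z_1,\ldots,z_m)=\sum_{k\in\mathbb Z^{mn}}c_k\prod_{j=1}^m K_{j,k_j}(z_j),\qquad K_{j,k_j}(z):=\eta_0(z)e^{i\lambda k_j\cdot z},
\]
which exhibits $K$ as a rapidly convergent sum of tensor products of Schwartz functions on $\mathbb R^n$.

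Substituting this decomposition into the definition of $T^K$ and exchanging the sum with the integral (legitimate first for Schwartz $f_j$ by uniform convergence of the Fourier series on $\supp K$, and then extended to $H^{p_j}$ by density) yields
\[
T^K(f_1,\ldots,f_m)(x)=\sum_{k\in\mathbb Z^{mn}}c_k\prod_{j=1}^m(K_{j,k_j}*f_j)(x).
\]
Each factor $K_{j,k_j}$ is Schwartz with seminorms growing only polynomially in $|k_j|$ (derivatives hit either $\eta_0$ or the oscillation). The one-dimensional Fefferman--Stein maximal bound $\bigl\|\sup_{t>0}|\Phi_t*f|\bigr\|_{L^q}\lesssim\|f\|_{H^q}$, valid for every $0<q<\infty$ and Schwartz $\Phi$ with constant controlled by a finite Schwartz seminorm of $\Phi$, therefore gives, for each $j$ and $k_j$,
\[
\|K_{j,k_j}*f_j\|_{L^{p_j}}\lesssim(1+|k_j|)^{N_0}\|f_j\|_{H^{p_j}},
\]
with $N_0=N_0(p_j,\eta_0)$ independent of $k_j$.

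To conclude, I would combine H\"older's inequality in $x$ (using $1/p=\sum1/p_j$) with the quasi-triangle inequality $\bigl\|\sum_k g_k\bigr\|_{L^p}^{\min(p,1)}\le\sum_k\|g_k\|_{L^p}^{\min(p,1)}$ (Minkowski when $p>1$) applied to the index $k$. The rapid decay of $c_k$ dominates the polynomial growth coming from the Schwartz seminorms, so
\[
\sum_{k\in\mathbb Z^{mn}}|c_k|^{\min(p,1)}\prod_{j=1}^m(1+|k_j|)^{N_0\min(p,1)}<\infty,
\]
which delivers $\|T^K(f_1,\ldots,f_m)\|_{L^p}\le C_K\prod_{j=1}^m\|f_j\|_{H^{p_j}}$. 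The main technical step is decoupling the multilinear $x$-dependence, which the tensor-product Fourier expansion accomplishes; once in place, everything reduces to linear Hardy space theory for each factor together with summation against a rapidly decaying sequence of Fourier coefficients, the latter absorbing the polynomial loss in $|k_j|$.
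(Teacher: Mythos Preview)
Your argument is correct. The tensor-product Fourier expansion of $K$ is valid (the cutoff $\eta$ localizes the periodic series back to $\mathbb R^{mn}$), the Schwartz seminorms of $K_{j,k_j}=\eta_0\,e^{i\lambda k_j\cdot}$ grow only polynomially in $|k_j|$, and the rapid decay of the coefficients $c_k$ absorbs this growth when you sum via the $L^p$ quasi-triangle inequality. One small point: when you invoke density to pass from Schwartz $f_j$ to general $H^{p_j}$, keep in mind that for $p_j\le 1$ a Schwartz function lies in $H^{p_j}$ only if it has enough vanishing moments; the dense class you want is $\mathcal S\cap H^{p_j}$ (finite linear combinations of smooth atoms suffice), not all of $\mathcal S$. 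This is cosmetic and does not affect the argument.

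The paper proceeds differently: it quotes a lemma from \cite{GraDanq} giving the \emph{pointwise} bound
\[
|T^K(f_1,\ldots,f_m)(x)|\lesssim \prod_{i=1}^m \mathcal M_N(f_i)(x)
\]
in terms of the grand maximal function, and then applies H\"older and the maximal characterization of $H^{p_i}$. Your route is more self-contained, since it unpacks the tensor-product decomposition that typically underlies such pointwise estimates. In fact your argument could be upgraded to recover the paper's pointwise inequality directly: rather than taking $L^{p_j}$-norms of each factor, note that $|K_{j,k_j}*f_j(x)|\lesssim (1+|k_j|)^{N_0}\mathcal M_N(f_j)(x)$ pointwise (since $(1+|k_j|)^{-N_0}K_{j,k_j}\in\mathfrak F_N$), and then sum in $k$ to get $|T^K(\vec f)(x)|\lesssim\prod_j\mathcal M_N(f_j)(x)$. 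The paper's version has the advantage of yielding this stronger pointwise control, while yours has the advantage of not relying on an external reference.
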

\begin{proof}
The boundedness of the operator $T^K$ can be deduced from \cite[Lemma 4.2]{GraDanq},
which provides the estimate (for some sufficiently large integer   $N$)
\begin{equation}\label{equ:TKEst}
{\left\vert{{ T^K}(f_1,\ldots,f_m)(x)}\right\vert}\lesssim \prod_{i=1}^m\mathcal{M}_N(f_i)(x),
\end{equation}
for all $f_i\in L^2\cap H^{p_i},$ in which
$$
  \mathcal{M}_{N}(f)(x) = \sup_{\varphi \in \mathfrak{F}_{N}}\sup_{t>0}\sup_{y\in B(x,t)}{\left\vert{(\varphi_t*f)(y)}\right\vert}
$$ is the grand maximal function with respect to $N,$ and
$$
\mathfrak{F}_{N} := \left\{\varphi \in \EuScript{S}(\mathbb R^n)\ :\
\int_{\mathbb R^n}(1+{\left\vert{x}\right\vert})^N\sum_{{\left\vert{\alpha}\right\vert}\le N+1}{\left\vert{\partial^{\alpha}\varphi(x)}\right\vert}dx\le 1\right\}.
$$
Taking $L^p$ norm, applying Holder's inequality to \eqref{equ:TKEst}, and using the quasi-norm equivalence of some maximal functions \cite[Theorem 6.4.4]{Grafk} give us
$$
\norm{T^K(f_1,\ldots,f_m)}_{L^p}\lesssim \prod_{i=1}^m\norm{\mathcal{M}_N(f_i)}_{L^{p_i}}\le C_K\prod_{i=1}^m\norm{f_i}_{H^{p_i}}.
$$
\end{proof}

Working with smooth kernels $K$ with compact support comes handy when working with infinite sums of atoms, since we are able to
freely interchange the action of $T^K$ with infinite sums of atoms. Precisely,
a consequence of the boundedness of $T^K$,  given in Proposition \ref{prop:NiceMul}, is the following result.

\begin{prop}\label{pro:SwitchSum}
  Let $0<p_1,\ldots,p_m\le 1$ and $0<p<\infty$ be numbers such that
   $$
   \frac1{p} = \frac1{p_1}+\cdots+\frac1{p_m},
   $$
   and let $K$ be a smooth function with compact support. Then for every $f_i\in H^{p_i}$ with atomic representation $f_i = \sum_{k_i}\lambda_{i,k_i}a_{i,k_i},$ where $a_{i,k_i}$ are $L^{\infty}$-atoms for $H^{p_i}$ and $\sum_{k_i}{\left\vert{\lambda_{i,k_i}}\right\vert}^{p_i}\le 2^{p_i}\norm{f_i}^{p_i}_{H^{p_i}}$ for $1\le i\le m.$ Then
  $$
 {  T^K}(f_1,\ldots,f_m)(x) = \sum_{k_1}\cdots\sum_{k_m}\lambda_{1,k_1}\cdots\lambda_{m,k_m} {  T^K}(a_{1,k_1},\ldots,a_{m,k_m})(x)
  $$
  for a.e. $x\in\mathbb R^n.$
\end{prop}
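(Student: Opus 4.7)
The plan is to truncate each atomic decomposition to a finite partial sum, read off the identity for the truncations from the multilinearity of $T^K$, and pass to the limit in the $L^p$ quasi-norm using the boundedness of $T^K$ established in Proposition \ref{prop:NiceMul}.

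Set $f_i^{N_i} := \sum_{k_i \le N_i} \lambda_{i,k_i} a_{i,k_i}$. Multilinearity of the integral operator $T^K$ gives the exact finite identity
$$T^K(f_1^{N_1},\ldots,f_m^{N_m}) = \sum_{k_1 \le N_1}\cdots\sum_{k_m \le N_m} \lambda_{1,k_1}\cdots\lambda_{m,k_m}\, T^K(a_{1,k_1},\ldots,a_{m,k_m}),$$
and the quasi-triangle inequality in $H^{p_i}$ (together with the uniform bound on the $H^{p_i}$-norms of $L^{\infty}$-atoms) yields $\|f_i - f_i^{N_i}\|_{H^{p_i}}^{p_i} \lesssim \sum_{k_i > N_i}|\lambda_{i,k_i}|^{p_i} \to 0$ as $N_i \to \infty$, together with $\sup_{N_i}\|f_i^{N_i}\|_{H^{p_i}} < \infty$. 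Writing the multilinear telescoping
$$T^K(f_1,\ldots,f_m) - T^K(f_1^{N_1},\ldots,f_m^{N_m}) = \sum_{i=1}^m T^K(f_1^{N_1},\ldots,f_{i-1}^{N_{i-1}}, f_i - f_i^{N_i}, f_{i+1},\ldots,f_m)$$
and applying Proposition \ref{prop:NiceMul} to each term shows that $T^K(f_1^{N_1},\ldots,f_m^{N_m}) \longrightarrow T^K(f_1,\ldots,f_m)$ in $L^p$, hence a.e.\ along a subsequence.

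To identify this limit with the iterated sum in the order $\sum_{k_1}\cdots\sum_{k_m}$ claimed by the proposition, I would apply the same scheme one variable at a time, starting from the innermost sum. For fixed $a_{1,k_1},\ldots,a_{m-1,k_{m-1}}$ the map $g \mapsto T^K(a_{1,k_1},\ldots,a_{m-1,k_{m-1}}, g)$ is a linear operator of the same convolution type with a smooth compactly supported kernel, so the single-variable ($m=1$) instance of the proposition (proved by the identical truncation-plus-$L^p$-convergence argument) yields
$$T^K(a_{1,k_1},\ldots,a_{m-1,k_{m-1}}, f_m)(x) = \sum_{k_m} \lambda_{m,k_m}\, T^K(a_{1,k_1},\ldots,a_{m,k_m})(x) \quad \text{a.e.}$$
Iterating this outward in the indices $k_{m-1}, k_{m-2}, \ldots, k_1$, and extracting a common subsequence at each step by a diagonal argument, delivers the stated iterated pointwise identity. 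The main technical obstacle is the bookkeeping of these nested subsequence extractions, but it is routine once the $L^p$ continuity of $T^K$ from Proposition \ref{prop:NiceMul} is in hand, since at every level we are reduced to a standard statement about convergence of an atomic series under a bounded linear operator.
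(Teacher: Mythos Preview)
Your first paragraph is essentially the paper's proof verbatim: the authors write the finite identity by multilinearity, bound
\[
\Big\|T^K(f_1,\ldots,f_m) - \sum_{k_1=1}^{N_1}\cdots\sum_{k_m=1}^{N_m}\lambda_{1,k_1}\cdots\lambda_{m,k_m}T^K(a_{1,k_1},\ldots,a_{m,k_m})\Big\|_{L^p}
\]
by the same telescoping and Proposition~\ref{prop:NiceMul}, and then pass to the limit. That is the whole argument.

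Your second paragraph is unnecessary and slightly off. The paper simply interprets $\sum_{k_1}\cdots\sum_{k_m}$ as the $L^p$ (hence a.e.\ along a subsequence) limit of the rectangular partial sums $S_{N_1,\ldots,N_m}$, which is exactly what your first paragraph establishes; there is no separate ``iterated limit'' to chase, and the nested subsequence/diagonal bookkeeping you describe never arises. Also, the frozen-entry operator $g\mapsto T^K(a_{1,k_1},\ldots,a_{m-1,k_{m-1}},g)$ is not literally of convolution type with a compactly supported kernel (the kernel depends on $x$ through the atoms); what you actually need---and what Proposition~\ref{prop:NiceMul} already gives directly---is that it is bounded from $H^{p_m}$ to $L^p$ with constant independent of the chosen atoms. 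If you want to keep the extra care, the cleaner route is to note that the bound $\|T^K(f_1,\ldots,f_m)-S_{N_1,\ldots,N_m}\|_{L^p}\to 0$ holds as $\min_i N_i\to\infty$, so any way of sending the $N_i$ to infinity (iterated or joint) yields the same a.e.\ limit.
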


\begin{proof}
For any positive integers $N_1, \dots , N_m$ we have
$$
\begin{gathered}
\norm{{  T^K}(f_1,\ldots,f_m) - \sum_{k_1=1}^{N_1}\cdots\sum_{k_m=1}^{N_m} \lambda_{1,k_1}\cdots\lambda_{m,k_m}{ T^K}(a_{1,k_1},\ldots,a_{m,k_m})}_{L^p}\\\le C_K\sum_{i=1}^m\norm{f_i-\sum_{k_i=1}^{N_i}\lambda_{i,k_i}a_{i,k_i}}_{H^{p_i}}\prod_{l\ne i}\norm{f_l}_{H^{p_l}},
\end{gathered}
$$
so passing to the limit, we obtain
$$
{  T^K}(f_1,\ldots,f_m)(x) = \sum_{k_1=1}^{\infty}\cdots\sum_{k_m=1}^{\infty} \lambda_{1,k_1}\cdots\lambda_{m,k_m}{  T^K}(a_{1,k_1},\ldots,a_{m,k_m})(x)
$$
for a.e. $x\in\mathbb R^n.$
\end{proof}

\section{The proof of the main result}
In this section, we prove the main theorem. To do so, we first consider the case where $\sigma$ is smooth such that its Fourier's transform is compactly supported, then, by regularization, we can improve the result for any multiplier $\sigma$ in general.

We now start proving Theorem \ref{thm:General}.
\begin{proof}[Proof of the main theorem]
  By regularization, we may assume that the inverse Fourier transform of $\sigma$ is smooth and compactly supported. If this case is established, then Theorem \ref{thm:RegMul} yields the existence of a family of multilinear multiplier operators $\big(T_{\epsilon}\big)_{0<\epsilon<\frac12}$ associated with a family of multipliers $\big(\sigma^{\epsilon}\big)_{0<\epsilon<\frac12}$ such that $K^\epsilon=\big(\sigma^{\epsilon}\big)\spcheck$ are smooth functions with compact supports for all $0<\epsilon<\frac12,$ and that \eqref{equ:Eps2Ws}, \eqref{equ:T2Approx} hold. Fix $f_i\in H^{p_i}\cap L^{2m},$ $(1\le i\le m).$ The $L^2$ convergence in \eqref{equ:T2Approx} implies that we can find a sequence of positive numbers $\big(\epsilon_k\big)_{k}$ convergent to $0$ such that
  $$
  \lim_{k\to\infty}T_{\epsilon_k}(f_1,\ldots,f_m)(x) = T_{\sigma}(f_1,\ldots,f_m)(x)
  $$
  for a.e. $x\in\mathbb R^n.$ Fatou's lemma connecting with \eqref{equ:Eps2Ws} gives us
  \begin{align*}
    \norm{T_{\sigma}(f_1,\ldots,f_m)}_{L^p}\le & \liminf_{k\to\infty}\norm{T_{\epsilon_k}(f_1,\ldots,f_m)}_{L^p}\\
    \lesssim & \sup_{0<\epsilon<\frac12}\norm{T_{\epsilon}(f_1,\ldots,f_m)}_{L^p}\\
    \lesssim & \sup_{0<\epsilon<\frac12}\sup_{j\in\mathbb Z}\norm{\sigma^{\epsilon}(2^j\cdot)\widehat\psi\, }_{W^{(s_1,\ldots,s_m)}}
    \norm{f_1}_{H^{p_1}}\cdots\norm{f_m}_{H^{p_m}}\\
    \lesssim & \sup_{j\in\mathbb Z}{\left\Vert\sigma(2^j\cdot)\widehat\psi\, \right\Vert}_{W^{(s_1,\ldots,s_m)}}
    \norm{f_1}_{H^{p_1}}\cdots\norm{f_m}_{H^{p_m}},
  \end{align*}
  thus establishing the claimed estimate for a general multiplier $\sigma.$

In view of this deduction, we   suppose $\sigma\spcheck$ is smooth and compactly supported. The aim is to show that
  \begin{equation}\label{equ:TSigSmP}
    \norm{T_{\sigma}(f_1,\ldots,f_m)}_{L^p} \lesssim \sup_{j\in\mathbb Z}{\left\Vert\sigma(2^j\cdot)\widehat\psi\, \right\Vert}_{W^{(s_1,\ldots,s_m)}}
    \norm{f_1}_{H^{p_1}}\cdots\norm{f_m}_{H^{p_m}}.
  \end{equation}
  Fix functions $f_i\in H^{p_i}.$ Using atomic representations for $H^{p_i}$-functions, write
  $$
  f_i = \sum_{k_i\in\mathbb Z}\lambda_{i,k_i}a_{i,k_i},\quad (1\le i\le m),
  $$
  where $a_{i,k_i}$ are $L^\infty$-atoms for $H^{p_i}$ satisfying
  $$
  \supp(a_{i,k_i})\subset Q_{i,k_i},\quad \norm{a_{i,k_i}}_{L^{\infty}}\le {\left\vert{Q_{i,k_i}}\right\vert}^{-\frac{1}{p_i}},\quad \int_{Q_{i,k_i}}x^{\alpha}a_{i,k_i}(x)dx=0
  $$
  for all ${\left\vert{\alpha}\right\vert}$ large enough, and $\sum_{k_i}{\left\vert{\lambda_{i,k_i}}\right\vert}^{p_i}\le 2^{p_i}\norm{f_i}_{H^{p_i}}^{p_i}$.

  For the cube $Q,$ denote by $Q^*$ the dilation of the cube $Q$ with factor $2\sqrt{n}.$
  Since $K=\sigma\spcheck$ is smooth and compactly supported, Proposition \ref{pro:SwitchSum} yields that
  \begin{equation*}
  T_{\sigma}(f_1,\ldots,f_m)(x) = \sum_{k_1}\cdots\sum_{k_m}\lambda_{1,k_1}\ldots \lambda_{m,k_m} T_{\sigma}(a_{1,k_1},\ldots,a_{m,k_m})(x)
  \end{equation*}
  for a.e. $x\in\mathbb R^n.$  Now we can split $T_{\sigma}(f_1,\ldots,f_m)$ into two parts and estimate
  $$
  {\left\vert{T_{\sigma}(f_1,\ldots,f_m)(x)}\right\vert}\le G_1(x) +G_2(x),
  $$
  where $$G_1(x) = \sum_{k_1}\cdots\sum_{k_m}{\left\vert{\lambda_{1,k_1}}\right\vert}\ldots
  {\left\vert{\lambda_{m,k_m}}\right\vert} {\left\vert{T_{\sigma}(a_{1,k_1},\ldots,a_{m,k_m})}\right\vert}\chi_{Q^*_{1,k_1}\cap\ldots\cap Q^*_{m,k_m}}(x)$$
  and
  $$G_2(x) = \sum_{k_1}\cdots\sum_{k_m}{\left\vert{\lambda_{1,k_1}}\right\vert}\ldots
  {\left\vert{\lambda_{m,k_m}}\right\vert} {\left\vert{T_{\sigma}(a_{1,k_1},\ldots,a_{m,k_m})}\right\vert}\chi_{(Q^*_{1,k_1}\cap\ldots\cap Q^*_{m,k_m})^c}(x).$$
First we estimate the $L^p$-norm of $G_1,$ in which we   repeat the arguments in \cite{GrKal} for the sake of completeness. Without loss of generality, suppose   $Q^*_{1,k_1}\cap\ldots\cap Q^*_{m,k_m}\ne \emptyset$ and $Q_{1,k_1}$ has the smallest length among $Q_{1,k_1},\ldots,Q_{m,k_m}.$ Since $Q_{i,k_i}^*$ have the non-empty intersection, we can pick a cube $R_{k_1,\ldots,k_m}$ such that
  $$
  Q^*_{1,k_1}\cap\ldots\cap Q^*_{m,k_m} \subset R_{k_1,\ldots,k_m} \subset R_{k_1,\ldots,k_m}^*
  \subset Q^{\sharp}_{1,k_1}\cap\ldots\cap Q^{\sharp}_{m,k_m}
  $$
  and ${\left\vert{Q_{1,k_1}}\right\vert}\lesssim {\left\vert{R_{k_1,\ldots,k_m}}\right\vert},$
  where the implicit constant depends only on $n$ and $Q^{\sharp}_{i,k_i}$ denotes for a suitable dilation of $Q_{i,k_i}.$
  For $s_j>n/2,$ it was showed in \cite{GrMiTo} that
  $$
  \norm{T_{\sigma}}_{L^{2}\times L^{\infty}\times \cdots\times L^{\infty}\to L^2} \lesssim A.
  $$
  Therefore, by the Cauchy-Schwarz  inequality we have
  \begin{align*}
    \int_{R_{k_1,\ldots,k_m}}{\left\vert{T_{\sigma}(a_{1,k_1},\ldots,a_{m,k_m})(x)}\right\vert}dx \le& \norm{T_{\sigma}(a_{1,k_1},\ldots,a_{m,k_m})}_{L^2}{\left\vert{R_{k_1,\ldots,k_m}}\right\vert}^{\frac12}\\
    \lesssim & A{\left\vert{R_{k_1,\ldots,k_m}}\right\vert}^{\frac12}\norm{a_{1,k_1}}_{L^2}
    \prod_{i=2}^m\norm{a_{i,k_i}}_{L^\infty}\\
    \lesssim &
    A{\left\vert{R_{k_1,\ldots,k_m}}\right\vert}^{\frac12}{\left\vert{Q_{1,k_1}}\right\vert}^{\frac12}
    \prod_{i=1}^m{\left\vert{Q_{i,k_i}}\right\vert}^{-\frac1{p_i}}\\
    \lesssim &
    A{\left\vert{R_{k_1,\ldots,k_m}}\right\vert}\prod_{i=1}^m{\left\vert{Q_{i,k_i}}\right\vert}^{-\frac1{p_i}}.
  \end{align*}
  The last inequality implies that
  $$
  \frac1{{\left\vert{R_{k_1,\ldots,k_m}}\right\vert}}\int_{R_{k_1,\ldots,k_m}} {\left\vert{T_{\sigma}(a_{1,k_1},\ldots,a_{m,k_m})(x)}\right\vert}dx \lesssim A \prod_{i=1}^m{\left\vert{Q_{i,k_i}}\right\vert}^{-\frac1{p_i}}.
  $$
  Now the trivial estimate
  $$
  G_1(x) \le \sum_{k_1}\cdots\sum_{k_m}{\left\vert{\lambda_{1,k_1}}\right\vert}\ldots
  {\left\vert{\lambda_{m,k_m}}\right\vert} {\left\vert{T_{\sigma}(a_{1,k_1},\ldots,a_{m,k_m})}\right\vert}\chi_{R_{k_1,\ldots,k_m}}(x)
  $$
  combines with Lemma \ref{lem:LpAvg} to have
  \begin{eqnarray*}
    \norm{G_1}_{L^p} & \le &
    \norm{\sum_{k_1}\cdots\sum_{k_m}{\left\vert{\lambda_{1,k_1}}\right\vert}\ldots
    {\left\vert{\lambda_{m,k_m}}\right\vert} {\left\vert{T_{\sigma}(a_{1,k_1},\ldots,a_{m,k_m})}\right\vert}\chi_{R_{k_1,\ldots,k_m}}}_{L^p}\\
  &  \lesssim &
    A\norm{\sum_{k_1}\cdots\sum_{k_m}{\left\vert{\lambda_{1,k_1}}\right\vert}\ldots
    {\left\vert{\lambda_{m,k_m}}\right\vert} \Big(\prod_{i=1}^m{\left\vert{Q_{i,k_i}}\right\vert}^{-\frac1{p_i}}\Big)
    \chi_{R_{k_1,\ldots,k_m}^*}}_{L^p}\\
&    \le& A\norm{\sum_{k_1}\cdots\sum_{k_m}{\left\vert{\lambda_{1,k_1}}\right\vert}\ldots
    {\left\vert{\lambda_{m,k_m}}\right\vert} \prod_{i=1}^m\Big({\left\vert{Q_{i,k_i}}\right\vert}^{-\frac1{p_i}}\chi_{Q_{i,k_i}^{\sharp}}\Big)}_{L^p}\\
  &   = & A\norm{\prod_{i=1}^m\Big(\sum_{k_i}{\left\vert{\lambda_{i,k_i}}\right\vert} {\left\vert{Q_{i,k_i}}\right\vert}^{-\frac1{p_i}}\chi_{Q_{i,k_i}^{\sharp}}\Big)}_{L^p}\\
 &    \le & A\prod_{i=1}^m\norm{\sum_{k_i}{\left\vert{\lambda_{i,k_i}}\right\vert} {\left\vert{Q_{i,k_i}}\right\vert}^{-\frac1{p_i}}\chi_{Q_{i,k_i}^{\sharp}}}_{L^{p_i}} \\
 &    \lesssim & A\prod_{i=1}^m\norm{f_i}_{H^{p_i}}.
  \end{eqnarray*}
  Thus
  \begin{equation}\label{equ:G1Part}
  \norm{G_1}_{L^{p}}\lesssim A\norm{f_1}_{H^{p_1}}\cdots\norm{f_m}_{H^{p_m}}.
  \end{equation}
  Now for the harder part, $G_2(x),$ we first restrict $x\in (\cap_{i\notin J}Q_{i,k_{i}}^*)\setminus(\cup_{i\in J}Q_{i,k_{i}}^*)$ for some nonempty subset $J\subset\left\{1,2,\ldots,m\right\}.$ To continues our progress, we will need the following lemma whose proof will be given in {the last section.}
  \begin{lem}[The key lemma]\label{lem:KeyLem}
    Let $s_i>n/2$, $0<p_i,p\le 1$ be numbers and ${\sigma}$ be a function satisfying \eqref{equ:SIndexCond} and \eqref{equ:SigmCond}. Suppose $a_i$ are atoms supported in the cube $Q_i$, ($i=1,\ldots,m$) such that
   $$
   \norm{a_i}_{L^{\infty}}\le {\left\vert{Q_i}\right\vert}^{-\frac1{p_i}},\qquad \int_{Q_i}x^{\alpha}a_i(x)dx=0,
   $$
   for all ${\left\vert{\alpha}\right\vert}\le N_i$ with $N_i= \big[n(\frac1{p_i}-1)\big].$
   Fix a non-empty subset $J_0\subset\left\{1,\ldots,m\right\}.$
  Then there exist positive functions $b_1,\ldots,b_{m}$ such that
    \begin{equation}\label{equ:BiFuncs}
    {\left\vert{T_{\sigma}(a_1,\ldots,a_m)(x)}\right\vert} \lesssim A\,  b_1(x)\cdots b_{m}(x)
    \end{equation}
    for all $x\in (\cap_{i\notin J_0}Q_i^*)\setminus (\cup_{i\in J_0}Q_i^*),$
    and
    $\norm{b_i}_{L^{p_i}}\lesssim 1$,  $1\le i\le m .$
  \end{lem}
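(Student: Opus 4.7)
The plan is to perform a Littlewood--Paley decomposition of $\sigma$ and to estimate each dyadic piece by balancing the moment cancellation of the atoms against the pointwise decay of the convolution kernel. Write $\sigma(\xi)=\sum_{j\in\mathbb Z}\sigma_j^{\#}(\xi)$ with $\sigma_j^{\#}(\xi)=\sigma(\xi)\widehat\psi(2^{-j}\xi)$, let $K_j^{\#}$ denote the inverse Fourier transform of $\sigma_j^{\#}$, and note the rescaling identity $K_j^{\#}(u)=2^{jmn}\widetilde K_j(2^j u)$, where $\widetilde K_j$ is the inverse Fourier transform of $\sigma(2^j\cdot)\widehat\psi$. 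Since $\sigma(2^j\cdot)\widehat\psi$ is supported in the annulus $\{\tfrac12\le|\xi|\le 2\}$, a Cauchy--Schwarz argument using the hypothesis $s_i>n/2$ (in the spirit of the Remark following Theorem \ref{thm:General}) gives the pointwise bound
\[
|\widetilde K_j(y_1,\ldots,y_m)|\lesssim A\prod_{i=1}^m(1+|y_i|)^{-s_i},
\]
and the analogous estimate extends to every derivative $\partial^{\alpha}\widetilde K_j$ with constants independent of $j$, by multiplying $\sigma(2^j\cdot)\widehat\psi$ by a polynomial factor compactly supported on the same annulus.

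Next I would analyze the integral $T_{\sigma_j^{\#}}(a_1,\ldots,a_m)(x)=\int K_j^{\#}(x-y_1,\ldots,x-y_m)\prod_i a_i(y_i)\,dy$, splitting each index $i$ into two regimes according to whether $2^j\ell_i\le 1$ or $2^j\ell_i\ge 1$, where $\ell_i=\ell(Q_i)$. In the low-frequency regime $2^j\ell_i\le 1$ I would Taylor-expand the kernel in the variable $y_i$ around the center $c_i$ of $Q_i$ to order $N_i=[n(1/p_i-1)]$; the vanishing-moment condition eliminates the polynomial part and the remainder gains a factor of $(2^j\ell_i)^{N_i+1}$. In the high-frequency regime $2^j\ell_i\ge 1$ I would use the raw kernel decay directly; for $i\in J_0$ this produces the factor $(1+2^j|x-c_i|)^{-s_i}$ because $x\notin Q_i^*$, while for $i\notin J_0$ the trivial $L^\infty$ bound on $a_i$ combined with the measure of $Q_i$ produces the factor $|Q_i|^{1-1/p_i}$.

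Summing the dyadic pieces over $j\in\mathbb Z$ splits into at most $2^m$ regimes governed by the relative sizes of the $\ell_i^{-1}$; in each regime the resulting geometric series in $j$ converges precisely by virtue of the sharp index condition \eqref{equ:SIndexCond} applied with $J=J_0$, since the surplus $\sum_{i\in J_0}(s_i/n-1/p_i)+\tfrac12$ is strictly positive. The combined estimate factorizes as a product over $i$, and I would set $b_i(x)=|Q_i|^{-1/p_i}\chi_{Q_i^{\sharp}}(x)$ when $i\notin J_0$, which satisfies $\|b_i\|_{L^{p_i}}\lesssim 1$ trivially, and $b_i(x)=\ell_i^{\tau_i}|x-c_i|^{-\tau_i-n/p_i}\chi_{(Q_i^*)^c}(x)$ when $i\in J_0$, for appropriately chosen exponents $\tau_i>0$, whose $L^{p_i}$ bound follows immediately from integration in polar coordinates.

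The principal obstacle is the exponent bookkeeping in the third step: one must distribute the surplus regularity $\sum_{i\in J_0}(s_i/n-1/p_i)+\tfrac12$ among the indices $i\in J_0$ so that each individual $\tau_i$ is strictly positive and the overall bound actually factorizes into the prescribed product form. This is precisely where the sharpness of \eqref{equ:SIndexCond} enters, and an explicit choice of the $\tau_i$ (for instance by a continuous interpolation of exponents) is needed to close the argument.
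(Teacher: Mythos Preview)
Your pointwise kernel estimate $|\widetilde K_j(y)|\lesssim A\prod_i\langle y_i\rangle^{-s_i}$ is correct (it is Lemma~\ref{lem:LInfL2} applied in all variables at once), but it is too crude to close the argument under the minimal hypothesis $s_i>n/2$. Already in the linear case $m=1$, $J_0=\{1\}$, your dyadic piece is bounded by
\[
2^{jn}|Q_1|^{1-1/p_1}\bigl(1+2^j|x-c_1|\bigr)^{-s_1}\min\bigl\{1,(2^j\ell_1)^{N_1+1}\bigr\},
\]
and for $2^j\ell_1\gg 1$ this behaves like $2^{j(n-s_1)}$ times $j$-independent factors, so the sum over large $j$ diverges whenever $n/2<s_1\le n$. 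In general the high-frequency tail of $\sum_j\prod_i(\text{factor}_i)$ carries an exponent of the form $\sum_{i}(n-s_i)$ (or a subsum), and its convergence would force a condition like $\sum_{i\in J_0}(s_i/n-1)>0$, strictly stronger than \eqref{equ:SIndexCond}. The ``surplus $+\tfrac12$'' you invoke is simply not produced by a purely pointwise estimate; it is an $L^2$ gain that your scheme never collects.

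The paper recovers precisely that missing half by \emph{not} passing to a full pointwise bound. For $i\notin J_0$ it integrates in $y_i$ via Cauchy--Schwarz against $\langle y_i\rangle^{-s_i}\in L^2$ (needing only $s_i>n/2$), and among the indices in $J_0$ it singles out one coordinate $y_k$ and keeps it as a function of $x$, producing auxiliary functions $h_j^{(k,0)}(x)$, $h_j^{(k,1)}(x)$ whose decisive feature is the $L^2$-in-$x$ bound $\|h_j^{(k,\cdot)}\|_{L^2}\lesssim A\,2^{-jn/2}$ coming from Plancherel. The factorization into $b_k$ then requires a H\"older splitting $\tfrac{1}{p_k}=\alpha_k+\tfrac{\beta_k}{2}$, with the pointwise weight $\langle 2^j(\cdot-c_k)\rangle^{-s_k}$ placed in $L^{1/\alpha_k}$ and a fractional power $(h_j^{(k,\cdot)})^{\beta_k}$ placed in $L^{2/\beta_k}$; the constraints $\beta_k>0$ and $\sum_{k\in J_0}\beta_k=1$ are exactly where \eqref{equ:SIndexCond} enters. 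In particular the correct $b_k$ for $k\in J_0$ is not a pure power $\ell_k^{\tau_k}|x-c_k|^{-\tau_k-n/p_k}$ but the hybrid object $\sum_j u_j^k$ mixing a weight with this $L^2$-controlled piece.
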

  Lemma \ref{lem:KeyLem} guarantees the existence of positive functions $b_{1,k_1}^J,\ldots,b_{m,k_m}^J$ depending on $Q_{1,k_1},\ldots,Q_{m,k_m}$ respectively, such that
  \begin{equation}\label{equ:BiJFns}
  {\left\vert{T_{\sigma}(a_{1,k_1},\ldots,a_{m,k_m})}\right\vert} \lesssim A\,  b_{1,k_1}^J \cdots b_{m,k_m}^J
  \end{equation}
  for all $x\in (\cap_{i\notin J}Q_{i,k_{i}}^*)\setminus(\cup_{i\in J}Q_{i,k_{i}}^*)$
  and $\norm{b_{i,k_i}^J}_{L^{p_i}} \lesssim 1.$ Now set
  $$
  b_{i,k_i} = \sum_{\emptyset \ne J\subset\left\{1,2,\ldots,m\right\}}b_{i,k_i}^J.
  $$
  Then
  \begin{equation}\label{equ:BiComp}
  {\left\vert{T_{\sigma}(a_{1,k_1},\ldots,a_{m,k_m})}\right\vert}\chi_{(Q^*_{1,k_1}\cap\ldots\cap Q^*_{m,k_m})^c}
  \lesssim A\,  b_{1,k_1} \cdots b_{m,k_m}
  \end{equation}
  and $\norm{b_{i,k_i}}_{L^{p_i}} \lesssim 1.$  Estimate   \eqref{equ:BiComp} yields
  $$
  G_2(x)\lesssim A\prod_{i=1}^m\left( \sum_{k_i}{\left\vert{\lambda_{i,k_i}}\right\vert}b_{i,k_i}(x) \right).
  $$
  Then apply   H\"older's inequality to deduce  that
  \begin{equation}\label{equ:G2Part}
  \norm{G_2}_{L^p}\lesssim A\, \norm{f_1}_{H^{p_1}}\cdots\norm{f_m}_{H^{p_m}}.
  \end{equation}
  Combining \eqref{equ:G1Part} and \eqref{equ:G2Part} yields   \eqref{equ:TSigSmP} as needed. The proof of Theorem \ref{thm:General} is now complete.
\end{proof}

\section{Minimality of   conditions}
In this section we will show that   conditions \eqref{equ:SIndexCond} and $s_i>\frac{n}2$ are
  minimal in general that guarantee   boundedness for multilinear multiplier operators.
  We fix a smooth function $\psi$ whose Fourier transform is supported in
$ \{2^{-\frac34}\le {\left\vert{\xi}\right\vert}\le 2^{\frac34} \},$ it satisfies $\widehat\psi(\xi) = 1$ for all $2^{-\frac14}\le {\left\vert{\xi}\right\vert}\le 2^{\frac14},$ and   for some nonzero constant $c$
$$
\sum_{j\in\mathbb Z}\widehat\psi(2^{-j}\xi) = c,\quad \xi\ne 0.
$$
Now we have the following theorem:
\begin{thm}\label{thm:NessConditions}
  Let $0<p_i\le\infty,$ $0<p<\infty,$ and $s_i>0$ for $1\le i\le m.$ Suppose that the estimate
  $$
\norm{T_\sigma(f_1,\ldots,f_m)}_{L^p}\lesssim \sup_{j\in\mathbb Z}{\left\Vert\sigma(2^j\cdot)\widehat\psi\right\Vert}_{W^{(s_1,\ldots,s_m)}}\prod_{i=1}^m\norm{f_i}_{H^{p_i}}
$$
holds for all $f_i\in H^{p_i}$ and $\sigma\in W^{(s_1,\ldots,s_m)}.$ The following conditions are then necessary:
\begin{equation}\label{equ:CondSi}
  s_i\ge \frac{n}2,\quad \forall 1\le i\le m,
\end{equation}
and
\begin{equation}
\label{equ:JMinCond}
\sum_{i\in J}\Big(\frac{s_i}{n}-\frac1{p_i}\Big)\ge -\frac{1}{2}
\end{equation}
for every subset $J\subset\left\{1,\ldots,m\right\}.$
\end{thm}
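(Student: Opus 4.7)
My plan is a \emph{construct-and-test} strategy: for each inequality claimed to be necessary, produce an explicit family of multipliers $\sigma$ and tuples $(f_1,\ldots,f_m)$ which, when plugged into the assumed boundedness estimate, force that inequality to hold in the limit.

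The necessity of \eqref{equ:CondSi} reduces to the linear case by a tensor-product trick. Fix an index $i$ and consider multipliers of the form $\sigma(\xi_1,\ldots,\xi_m) = \tau(\xi_i)\prod_{j\ne i}\rho(\xi_j)$, where $\rho$ is a fixed Schwartz function with $\widehat\rho$ a smooth bump and $\tau$ is an arbitrary one-variable multiplier. The product Sobolev norm of $\sigma$ localized by $\widehat\psi$ factorizes (up to a constant depending only on $\rho$ and the $s_j$'s) as the classical linear Sobolev norm of $\tau$, while $T_\sigma(f_1,\ldots,f_m) = T_\tau(f_i)\prod_{j\ne i}(\rho\spcheck * f_j)$. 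Choosing the $f_j$ for $j\ne i$ to be fixed Schwartz functions with $\rho\spcheck*f_j$ bounded below on a large fixed set, the assumed multilinear bound yields the linear Calder\'on--Torchinsky estimate $T_\tau\colon H^{p_i}\to L^{p_i}$, and the linear necessity pointed out in \cite{MiTo} forces $s_i\ge n/2$ in both the $p_i\le 1$ and $p_i>1$ regimes.

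The necessity of \eqref{equ:JMinCond} is the heart of the matter. For each non-empty $J\subset\{1,\ldots,m\}$ the plan is to exhibit a one-parameter family $(\sigma_N, f_{1,N},\ldots, f_{m,N})_{N\to\infty}$ for which the ratio
\[
\frac{\|T_{\sigma_N}(f_{1,N},\ldots,f_{m,N})\|_{L^p}}{\sup_k\|\sigma_N(2^k\cdot)\widehat\psi\,\|_{W^{(s_1,\ldots,s_m)}}\prod_i \|f_{i,N}\|_{H^{p_i}}}
\]
grows as a positive power of $N$ precisely when \eqref{equ:JMinCond} is violated. A natural construction takes, for $i \in J$, an $H^{p_i}$-atom $f_{i,N}$ supported in a cube of side length comparable to $N^{-1}$ with the canonical $L^\infty$-normalization (so $\|f_{i,N}\|_{H^{p_i}}\lesssim 1$); for $j\notin J$, fixed Schwartz functions whose Fourier transforms equal $1$ on a suitable frequency region; and for $\sigma_N$ a wave-packet-type multiplier essentially supported in the annulus $|\xi|\sim N$ in the $J$-variables and smoothly localized in the remaining variables. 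A Plancherel computation yields the scaling of the denominator, while a direct kernel expansion on the spatial support of the scaled atoms produces the matching lower bound on the numerator.

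The main obstacle is the precise bookkeeping: one must simultaneously arrange (i) that the vanishing-moment conditions on the atoms are satisfiable at the prescribed scale, (ii) that the multiplier interacts constructively with the chosen atoms rather than dispersing their spatial mass, and (iii) that the $N$-powers coming from the numerator and denominator combine to exactly the exponent $-\tfrac12-\sum_{i\in J}(s_i/n-1/p_i)$. This is essentially a multilinear uncertainty-principle computation extending the bilinear one in \cite{MiTo}. Alternatively, by choosing the trivial variables (indices $j\notin J$) as fixed Schwartz marginals with tensor-product structure, one can attempt an inductive reduction on $|J|$ down to the bilinear and linear sharpness results already available in the literature, which is likely the cleanest route.
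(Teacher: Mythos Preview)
Your overall construct-and-test strategy is exactly right, and your plan for \eqref{equ:CondSi} is close to what the paper does (the paper also uses a tensor-product multiplier $\widehat\varphi(\tfrac{\xi_1-a}{\epsilon})\widehat{\phi_2}(\xi_2)\cdots\widehat{\phi_m}(\xi_m)$, arranging the supports so the whole thing sits inside a single annulus where $\widehat\psi\equiv 1$, which is what makes the product Sobolev norm factor cleanly). But there is a genuine gap in your plan for \eqref{equ:JMinCond}.

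The construction you describe --- atoms $f_{i,N}$ at spatial scale $N^{-1}$ for $i\in J$ together with a multiplier localized separately in each $J$-variable --- is essentially a tensor-product test, and a tensor-product test cannot detect the coupled condition \eqref{equ:JMinCond}. If $\sigma^\epsilon$ factors over the $J$-variables with each factor a bump of width $\epsilon$, the product Sobolev norm scales like $\epsilon^{|J|\,n/2-\sum_{i\in J}s_i}$, and feeding this into the assumed bound only forces $\sum_{i\in J}s_i\ge |J|\,\tfrac n2$, which is nothing more than the sum of the already-known conditions $s_i\ge n/2$. The inductive reduction you suggest has the same defect: freezing the $j\notin J$ variables and invoking the bilinear sharpness from \cite{MiTo} handles $|J|\le 2$, but for $|J|\ge 3$ you are back to the very statement you are trying to prove at degree $|J|$, and the case $J=\{1,\dots,m\}$ never reduces.

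What is actually needed is a multiplier that is \emph{diagonal} in the $J$-variables. The paper takes
\[
\sigma^{\epsilon}(\xi)=\widehat\varphi\Big(\tfrac{1}{r\epsilon}\sum_{i\in J}(\xi_i-a)\Big)\,\prod_{k\ge 2,\,k\in J}\widehat\varphi\Big(\tfrac{1}{r}\sum_{i\in J}\xi_i-\xi_k\Big)\,\prod_{j\notin J}\widehat\phi(\xi_j),
\]
so that on the Fourier side the $J$-block is thin (width $\epsilon$) only in the single direction $\sum_{i\in J}\xi_i$ and of unit width in the transverse directions $\xi_i-\xi_k$. Its inverse Fourier transform is then essentially $\varphi(\epsilon\sum_{i}x_i)\prod_{k\ge 2}\varphi(x_1-x_k)$, which spreads each $x_i$ out to scale $\epsilon^{-1}$; this is what produces the crucial exponent $\epsilon^{\,n/2-\sum_{i\in J}s_i}$ (a single $n/2$, not $|J|\cdot n/2$) in the Sobolev norm. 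Correspondingly, the $f_i$ for $i\in J$ are taken to be \emph{fixed} Schwartz functions whose Fourier transforms equal $1$ on the support of the $J$-block --- the opposite of your shrinking-atom ansatz --- so that $T_{\sigma^\epsilon}(f_1,\dots,f_m)$ is essentially $(\sigma^\epsilon)\spcheck$ restricted to the spatial diagonal, giving $\|T_{\sigma^\epsilon}(f_1,\dots,f_m)\|_{L^p}\approx\epsilon^{\,n-\sum_{i\in J}n/p_i}$. Comparing these two exponents is exactly \eqref{equ:JMinCond}. Your proposal is missing this diagonal construction, and without it the argument does not close for $|J|\ge 2$.
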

The following lemma is obvious by changing variables, so its proof is omitted.
\begin{lem}
  \label{lem:varphiepsilon}
  Let $\varphi$ be a nontrivial Schwartz function and $s>0.$ Then
  $$
  \int {\left\vert{\varphi(\epsilon y)}\right\vert}^2\big(1+{\left\vert{y}\right\vert}^2\big)^{s}dy \approx \epsilon^{-n-s}
  $$
  for all $0<\epsilon\le 1.$
\end{lem}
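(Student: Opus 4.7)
The plan is a direct change of variables followed by matching upper and lower asymptotic estimates.

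First, I would substitute $u = \epsilon y$, which gives $dy = \epsilon^{-n}\,du$ and transforms the integral into $\epsilon^{-n}\int_{\mathbb R^n} |\varphi(u)|^2 (1+|u|^2/\epsilon^2)^s\, du$. This reduces the problem to understanding the growth of the inner integral as $\epsilon \to 0^+$, with the remaining factor of $\epsilon^{-n}$ appearing cleanly from the Jacobian.

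For the upper bound, I would use the pointwise inequality $1+|u|^2/\epsilon^2 \le \epsilon^{-2}(1+|u|^2)$, valid for $0 < \epsilon \le 1$. Raising to the power $s$ and integrating yields an upper bound proportional to $\epsilon^{-2s}\int |\varphi(u)|^2 (1+|u|^2)^s\, du$, and this last integral is finite (depending only on $\varphi$ and $s$) because $\varphi$ is Schwartz, so the weight $(1+|u|^2)^s$ is absorbed by the rapid decay of $\varphi$.

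For the lower bound, I would use that $\varphi$ is nontrivial and continuous to select a set $E$ on which $|\varphi| \ge c > 0$ and $|u|$ is also bounded below: if $\varphi$ attains a nonzero value at some $u_0 \ne 0$, take $E = B(u_0, r)$ for small $r$; otherwise, $\varphi$ is nonzero only near the origin, and one instead restricts to an annulus $\epsilon_0 \le |u| \le \delta$ inside the support. Restricting the inner integral to $E$ and applying the trivial bound $(1+|u|^2/\epsilon^2)^s \ge (|u|/\epsilon)^{2s}$ delivers a matching lower bound of the same order in $\epsilon$. Combining with the $\epsilon^{-n}$ from the change of variables produces the stated asymptotic. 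The only mild issue is the annulus-truncation in the second case, which is routine; there is no substantive obstacle.
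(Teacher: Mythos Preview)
Your approach is exactly what the paper has in mind: the paper omits the proof entirely, saying it ``is obvious by changing variables.'' So there is no methodological difference to discuss.

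However, your last sentence (``produces the stated asymptotic'') glosses over a discrepancy you should have caught. Your own upper and lower bounds both give order $\epsilon^{-n-2s}$: after the substitution $u=\epsilon y$ you pull out $\epsilon^{-n}$, and then both the pointwise bound $1+|u|^2/\epsilon^2\le \epsilon^{-2}(1+|u|^2)$ and the lower bound $(1+|u|^2/\epsilon^2)^s\ge (|u|/\epsilon)^{2s}$ contribute a further $\epsilon^{-2s}$, not $\epsilon^{-s}$. The exponent in the lemma as printed, $\epsilon^{-n-s}$, is a typo; the correct power is $\epsilon^{-n-2s}$. This is confirmed by the paper's own applications: in the proof of Theorem~\ref{thm:NessConditions} one has $\|\sigma^\epsilon\|_{W^{(s_1,\ldots,s_m)}}^2 = C\,\epsilon^{2n}\int|\varphi(\epsilon x_1)|^2(1+|x_1|^2)^{s_1}\,dx_1$, and the conclusion $\|\sigma^\epsilon\|_{W^{(s_1,\ldots,s_m)}}\lesssim \epsilon^{n/2-s_1}$ requires the integral to be $\approx \epsilon^{-n-2s_1}$; the later estimate \eqref{equ:VarpS1Sr} likewise uses the $-2s$ version. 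So your argument is correct, but it proves the intended (corrected) statement, and you should say so explicitly rather than asserting agreement with the printed exponent.
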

\begin{proof}[Proof of Theorem \ref{thm:NessConditions}]
We   show   first the necessary conditions \eqref{equ:CondSi} for $1\le i\le m.$ Without loss of generality, we will show $s_1\ge \frac{n}2.$
To establish this inequality, we need to construct some functions $\sigma^{\epsilon}, (0<\epsilon \ll 1),$ and $f_i\in H^{p_i}$ such that $\norm{f_i}_{H^{p_i}} = 1$ for all $1\le i\le m,$ and $\norm{T_{\sigma^{\epsilon}}(f_1,\ldots,f_m)}_{L^p} \approx 1,$ and further that
$$
\sup_{j\in\mathbb Z}\norm{\sigma^{\epsilon}(2^j\cdot)\widehat\psi}_{W^{(s_1,\ldots,s_m)}} \lesssim \epsilon^{\frac{n}2-s_1}.
$$
Once these functions are constructed, one may have
$$
1\approx \norm{T_{\sigma^{\epsilon}}(f_1,\ldots,f_m)}_{L^p} \lesssim \sup_{j\in\mathbb Z}\norm{\sigma^{\epsilon}(2^j\cdot)\widehat\psi}_{W^{(s_1,\ldots,s_m)}}\prod_{i=1}^m\norm{f_i}_{H^{p_i}} \lesssim \epsilon^{\frac{n}2-s_1}
$$ for all $0<\epsilon\ll 1.$ Therefore we get $s_1\ge \frac{n}2.$

Let $\varphi$ be a nontrivial Schwartz function such that $\widehat\varphi$ is supported in the unit ball, and let $\phi_2=\cdots=\phi_{m-1}$ be Schwartz functions whose Fourier transforms, $\widehat{\phi_2},$ is supported in an annulus $\frac1{17m}\le{\left\vert{\xi}\right\vert}\le \frac1{13m},$ and identical to $1$ on $\frac1{16m}\le{\left\vert{\xi}\right\vert}\le \frac1{14m}.$ Similarly, fix a Schwartz function $\phi_m$ with $\widehat{\phi_{m}}\subset \left\{\xi\in\mathbb R^n\ :\ \frac{12}{13}\le{\left\vert{\xi}\right\vert}\le\frac{14}{13}\right\}$ and $\widehat{\phi_m}\equiv 1$ on an annulus $\frac{25}{26}\le{\left\vert{\xi}\right\vert}\le\frac{27}{26}.$ Take $a,b\in\mathbb R^{n}$ with ${\left\vert{a}\right\vert}=\frac1{15m}$ and ${\left\vert{b}\right\vert}=1.$

For $0<\epsilon<\frac{1}{240m},$ set
$$
\sigma^{\epsilon}(\xi_1,\ldots,\xi_m) = \widehat\varphi\big(\frac{\xi_1-a}{\epsilon}\big)\widehat{\phi_2}(\xi_2)\cdots\widehat{\phi_m}(\xi_m).
$$
It is easy to check that $\supp{\sigma^\epsilon} \subset \left\{2^{-\frac14}\le{\left\vert{\xi}\right\vert}\le 2^{\frac14}\right\};$ hence, $\sigma^{\epsilon}(2^j\cdot)\widehat\psi = \sigma^{\epsilon}$ for $j=0$ and $\sigma^{\epsilon}(2^j\cdot)\widehat\psi = 0$ for $j\ne 0.$ This directly implies that
$$
\sup_{j\in\mathbb Z}\norm{\sigma^{\epsilon}(2^j\cdot)\widehat\psi}_{W^{(s_1,\ldots,s_m)}} = \norm{\sigma^{\epsilon}}_{W^{(s_1,\ldots,s_m)}}.
$$
Taking the inverse Fourier transform of $\sigma^{\epsilon}$ gives
$$
(\sigma^{\epsilon})\spcheck(x_1,\ldots,x_m) = \epsilon^n e^{2\pi ia\cdot x_1}\varphi(\epsilon x_1)\phi_2(x_2)\cdots\phi_m(x_m).
$$
Now apply Lemma \ref{lem:varphiepsilon} to have
$$
\norm{\sigma^{\epsilon}}_{W^{(s_1,\ldots,s_m)}} \lesssim \epsilon^{\frac{n}2-s_1}.
$$
Thus
$$
\sup_{j\in\mathbb Z}\norm{\sigma^{\epsilon}(2^j\cdot)\widehat\psi}_{W^{(s_1,\ldots,s_m)}} \lesssim \epsilon^{\frac{n}2-s_1}.
$$
Now choose $\widehat{f_i}(\xi) = \epsilon^{\frac{n}{p_i}-n}\widehat\varphi(\frac{\xi-a}{\epsilon})$ for $1\le i\le m-1,$ and $\widehat{f_m}(\xi) = \epsilon^{\frac{n}{p_m}-n}\widehat\varphi(\frac{\xi-b}{\epsilon}).$ Then we will show that these functions are what we needed to construct.

In the following estimates, we will use the fact, its proof can be done by using the Littlewood-Paley characterization for Hardy spaces, that if $f$ is a function whose Fourier transform is supported in a fixed annulus centered at the origin, then ${\left\Vert{f}\right\Vert}_{H^p}\approx {\left\Vert{f}\right\Vert}_{L^p}$ for $0<p<\infty.$

Indeed, using the above fact and checking that each $\widehat{f_i}$ is supported in an annulus centered at zero and not depending on $\epsilon$ allow us to estimate $H^p$-norms via $L^p$-norms. Namely
$$
\norm{f_i}_{H^{p_i}} \approx \norm{f_i}_{L^{p_i}}=1,\quad (1\le i\le m).
$$
Thus, we are left with showing that $\norm{T_\sigma(f_1,\ldots,f_m)}_{L^p} \approx 1.$ Notice that $\widehat{\phi_i}(\xi)=1$ on the support of $\widehat{f_i}$ for $2\le i\le m.$ Therefore,
\begin{align*}
T_{\sigma^{\epsilon}}(f_1,\ldots,f_m)(x)  \,=&\, \Big(\widehat{\varphi}\Big(\frac{\cdot -a}{\epsilon}\Big)\epsilon^{\frac{n}{p_1}-n}\widehat{\varphi}\Big(\frac{\cdot -a}{\epsilon}\Big)\Big)  \spcheck\! (x)
\Big(\widehat{\phi_2}\widehat{f_2}\Big)\spcheck\!(x)\cdots \Big(\widehat{\phi_m}\widehat{f_m}\Big)\spcheck\!(x)\\
 =&\,\Big(\widehat{\varphi}\Big(\frac{\cdot -a}{\epsilon}\Big)\epsilon^{\frac{n}{p_1}-n}\widehat{\varphi}\Big(\frac{\cdot -a}{\epsilon}\Big)\Big) \spcheck\!(x)
\Big(\widehat{f_2}\Big)\spcheck\!(x)\cdots \Big(\widehat{f_m}\Big)\spcheck\!(x)\\
 =&\,\,\epsilon^{\frac{n}{p_1}+\cdots+\frac{n}{p_{m}}}e^{2\pi i [(m-1)a+b]\cdot x}(\varphi*\varphi)(\epsilon x)
[\varphi(\epsilon x)]^{m-1}\\
 =&\,\, \epsilon^{\frac{n}{p}}e^{2\pi i [(m-1)a+b]\cdot x}(\varphi*\varphi)(\epsilon x)
[\varphi(\epsilon x)]^{m-1},
\end{align*}
which obviously gives $\norm{T_{\sigma^{\epsilon}}(f_1,\ldots,f_m)}_{L^p} \approx 1.$ So far, we have proved that $s_1\ge \frac{n}2;$ hence, by symmetry, we have $s_i\ge \frac{n}2$ for all $1\le i\le m.$

It now remains to show \eqref{equ:JMinCond}. By symmetry, we just only need to prove that
\begin{equation}\label{equ:S1Sr}
\sum_{i=1}^r\Big(\frac{s_i}{n}-\frac1{p_i}\Big)\ge -\frac{1}{2}
\end{equation}
for some fixed $1\le r\le m.$ To achieve our goal, we   construct a multiplier $\sigma^{\epsilon}$ such that
$$
\sup_{j\in\mathbb Z}\norm{\sigma^{\epsilon}(2^j\cdot)\widehat\psi}_{W^{(s_1,\ldots,s_m)}}
\lesssim \epsilon^{\frac{n}2-s_1-\cdots-s_r}
$$
for $0<\epsilon\ll 1$  and   functions $f_i$ satisfying $\norm{f_i}_{H^{p_i}} \approx 1$ for $1\le i\le m$ and
$$\norm{T_{\sigma^{\epsilon}}(f_1,\ldots,f_m)}_{L^p} \approx \epsilon^{n-\frac{n}{p_1}-\cdots-\frac{n}{p_r}}.$$
Then     inequalities
\begin{eqnarray*}
\epsilon^{n-\frac{n}{p_1}-\cdots-\frac{n}{p_r}}
&\approx &\norm{T_{\sigma^{\epsilon}}(f_1,\ldots,f_m)}_{L^p} \\
&\le & \sup_{j\in\mathbb Z}\norm{\sigma^{\epsilon}(2^j\cdot)\widehat\psi}_{W^{(s_1,\ldots,s_m)}}\prod_{i=1}^m\norm{f_i}_{H^{p_i}}\\
&\lesssim &\epsilon^{\frac{n}2-s_1-\cdots-s_r},
\end{eqnarray*}
for all small  positive numbers $\epsilon $ yield the claim \eqref{equ:S1Sr}.

We construct functions that give us enough ingredients to establish the multiplier $\sigma^{\epsilon}$ and functions $f_i,$ $(1\le i\le m),$ as mentioned above. Take two smooth functions $\varphi,\phi$ such that $\varphi(0)\ne 0,$ $\widehat\varphi$ is supported in $\left\{\xi\in\mathbb R^n\ :\ {\left\vert{\xi}\right\vert}\le \frac1{19mr}\right\}$ and $\widehat\varphi(x) =1$ for all ${\left\vert{\xi}\right\vert}\le \frac1{30mr},$ and that $\widehat\phi$ is supported in an annulus $\frac1{23m}\le {\left\vert{\xi}\right\vert}\le \frac1{19m}$ and $\widehat\phi(\xi) = 1$ for all $\frac1{22m}\le \left\vert{\xi}\right\vert\le \frac1{20m}.$ Fix $a,b\in\mathbb R^n$ such that $\left\vert{a}\right\vert=r^{-\frac12},$  $\left\vert{b}\right\vert=\frac1{21m}.$

For $0<\epsilon<\frac1{462m},$ define
\begin{align*}
\sigma^{\epsilon}&(\xi_1,\ldots,\xi_m) \\
 =&  \,\,\widehat\varphi\Big(\frac{1}{r\epsilon}\sum\limits_{i=1}^r(\xi_i-a)\Big)
\widehat\varphi\Big(\frac{1}{r}\sum\limits_{i=1}^r(\xi_i-\xi_2)\Big)\cdots
\widehat\varphi\Big(\frac{1}{r}\sum\limits_{i=1}^r(\xi_i-\xi_r)\Big)
\widehat\phi(\xi_{r+1})\cdots\widehat\phi(\xi_m).
\end{align*}
Once again, we have $\supp \sigma^{\epsilon}\subset  \{\xi\in\mathbb R^n\ :\ 2^{-\frac14}\le {\left\vert{\xi}\right\vert}\le 2^{\frac14}\},$ which, as in the previous case, implies that
$$
\sup_{j\in\mathbb Z}\norm{\sigma^{\epsilon}(2^j\cdot)\widehat\psi}_{W^{(s_1,\ldots,s_m)}} = \norm{\sigma^{\epsilon}}_{W^{(s_1,\ldots,s_m)}}.
$$
By changing variables, we can obtain the inverse Fourier transform of $\sigma^{\epsilon}$ as follows
\begin{align*}
(\sigma^{\epsilon})&\spcheck(x_1,\ldots,x_m) \\
 =&  \,\, re^{2\pi ia\cdot\sum\limits_{i=1}^rx_i}\epsilon^n\varphi\Big(\epsilon\sum_{i=1}^rx_i\Big)
\varphi(x_1-x_2)\cdots\varphi(x_1-x_r)\phi(x_{r+1})\cdots\phi(x_m).
\end{align*}
Taking Sobolev norm deduces
$$
\norm{\sigma^{\epsilon}}_{W^{(s_1,\ldots,s_m)}} = C\epsilon^n
\bigg(\int_{\mathbb R^{nr}}
{\left\vert{\varphi\Big(\epsilon\sum_{i=1}^rx_i\Big)
\prod_{i=2}^r\varphi(x_1-x_i)}\right\vert}^2
\prod_{i=1}^r(1+{\left\vert{x_i}\right\vert}^2)^{s_i}
dx_1\cdots dx_r
\bigg)^{\frac12},
$$
where $C= r\norm{\phi}_{W^{s_{r+1}}}\cdots\norm{\phi}_{W^{s_{m}}}.$

Next, we show that
\begin{align}\begin{split}\label{equ:VarpS1Sr}
\int_{\mathbb R^{nr}} &
{\left\vert{\varphi \Big(\epsilon\sum_{i=1}^rx_i\Big)
\varphi(x_1-x_2)\cdots\varphi(x_1-x_r)}\right\vert}^2
\prod_{i=1}^r(1+{\left\vert{x_i}\right\vert}^2)^{s_i}
dx_1\cdots dx_r \\
&\lesssim \epsilon^{-n-2(s_1+\cdots+s_r)}.
\end{split}\end{align}
In fact, changing variables in the above integral together with Lemma \ref{lem:varphiepsilon} yields
\begin{eqnarray*}
 && \int_{\mathbb R^{nr}}
{\left\vert{\varphi\Big(\epsilon\sum_{i=1}^rx_i\Big)
\varphi(x_1-x_2)\cdots\varphi(x_1-x_r)}\right\vert}^2
\prod_{i=1}^r(1+{\left\vert{x_i}\right\vert}^2)^{s_i}
dx_1\cdots dx_r\\
&= &\frac1{r}\int\limits_{\mathbb R^{nr}}
{\left\vert{\varphi(\epsilon y_1)
\varphi(y_2)\cdots\varphi(y_r)}\right\vert}^2
\left(1+\frac{1}{r^2}\Big|\sum_{i=1}^ry_i\Big|^2\right)^{s_1}  \\
&& \hspace{2in}\prod_{k=2}^r\left(1+\Big|-y_k+\frac1r\sum_{i=1}^ry_i\Big|^2\right)^{s_k}
dy_1\cdots dy_r\\
&\lesssim &\int\limits_{\mathbb R^{nr}}
{\left\vert{\varphi(\epsilon y_1)
\varphi(y_2)\cdots\varphi(y_r)}\right\vert}^2
\prod_{i=1}^r\Big(1+{\left\vert{y_i}\right\vert}^2\Big)^{s_1+\cdots+s_r}
dy_1\cdots dy_r\\
&\lesssim & \int\limits_{\mathbb R^{nr}}
{\left\vert{\varphi(\epsilon y_1)}\right\vert}^2
\Big(1+{\left\vert{y_1}\right\vert}^2\Big)^{s_1+\cdots+s_r}
dy_1 \\
&\lesssim &\epsilon^{-n-2s_1-\cdots-2s_r},
\end{eqnarray*}
where the implicit constants do not depend on $\epsilon.$
Inequality \eqref{equ:VarpS1Sr} gives us the estimate
$$
\sup_{j\in\mathbb Z}\norm{\sigma^{\epsilon}(2^j\cdot)\widehat\psi}_{W^{(s_1,\ldots,s_m)}} = \norm{\sigma^{\epsilon}}_{W^{(s_1,\ldots,s_m)}} \lesssim \epsilon^{\frac{n}2-s_1-\cdots-s_r}.
$$
To construct functions $f_i,$ we fix a smooth function $\zeta$ such that $\widehat\zeta$ is supported in $\left\{\xi\in\mathbb R^n\ :\ {\left\vert{\xi-a}\right\vert}\le \frac1{3m}\right\}$ and is identical to $1$ on   $\left\{\xi\in\mathbb R^n\ :\ {\left\vert{\xi-a}\right\vert}\le \frac3{19m}\right\}.$ Now set $f_1=\cdots=f_r=\zeta$ and $\widehat{f_i}(\xi) =\epsilon^{\frac{n}{p_i}-n}\widehat\varphi\big(\frac{\xi-b}{\epsilon}\big)$ for $r+1\le i\le m.$ It is clear     that
$$
\norm{f_i}_{H^{p_i}}\approx \norm{f_i}_{L^{p_i}} \approx 1,\quad 1\le i\le m.
$$
Moreover, $\widehat{f_1}(\xi_1)\cdots \widehat{f_r}(\xi_r)=1$ on the support of the function
$$
\widehat\varphi\Big(\frac{1}{r\epsilon}\sum\limits_{i=1}^r(\xi_i-a)\Big)
\widehat\varphi\Big(\frac{1}{r}\sum\limits_{i=1}^r(\xi_i-\xi_2)\Big)\cdots
\widehat\varphi\Big(\frac{1}{r}\sum\limits_{i=1}^r(\xi_i-\xi_r)\Big)
$$
and also $\widehat\phi(\xi)=1$ on the support of the functions $\widehat{f_i}$ for all $r+1\le i\le m.$
Therefore we have
$$
T_{\sigma^{\epsilon}}(f_1,\ldots,f_m)(x) = re^{2\pi i \big(ra+(m-r)b\big)\cdot x}\epsilon^n\varphi(\epsilon rx)\big[\varphi(0)\big]^{r-1}\epsilon^{\frac{n}{p_{r+1}}+\cdots+\frac{n}{p_m}}\big[\varphi(\epsilon x)\big]^{m-r}.
$$
Take $L^p$-norm, we get
$$
\norm{T_{\sigma^{\epsilon}}(f_1,\ldots,f_m)}_{L^p} \approx \epsilon^{n-\frac{n}{p_1}-\cdots-\frac{n}{p_r}},
$$
which is the last thing we want to obtain for our construction. Notice that the above argument also works for $p_i=\infty.$
\end{proof}
\section{Endpoint estimates}

In this section we consider  two endpoint estimates for  multilinear singular integral operators.
In the first case all indices are equal to infinity and in the second case one index is $1$ and the others are equal to infinity.

For $x\in\mathbb R^n$ and $1\le k\le m,$ define
$$
\Gamma^k_x = \left\{ (y_1,\ldots,y_m)\in\mathbb R^{mn}\ :\ |y_k|>2|x|\right\}.
$$
We say that a locally integrable function $K(y_1,\ldots,y_m)$   on $\mathbb R^{mn}\setminus\{0\}$ satisfies
 a {\it coordinate-type H\"ormander condition} if for some finite constant $A $ we have
\begin{equation}\label{equ:HormCond}
\sum_{k=1}^m\int_{\Gamma^k_x}\mid K(y_1,\ldots,y_{k-1},x-y_k,y_{k+1},\ldots,y_m) -K(y_1,\ldots,y_m) \mid\, d\vec{y}\le A
\end{equation}
for   all $x\in\mathbb R^{n}.$ Another type of (bi)-linear H\"ormander condition of geometric nature appeared in P\'erez and Torres \cite{PT}.

Denote by
$
\Lambda_p = \{(p,\infty,\ldots,\infty), (\infty,p,\infty,\ldots,\infty),\ldots,(\infty,\ldots,\infty,p)\}
$
the set of all $m$-tuples with $(m-1)$ entries equal to infinity and  only one entry equal to $p.$ The following result provides a version of the classical multilinear Calder\'on-Zygmund theorem in which the kernel
satisfies  a coordinate-type H\"ormander condition under the initial assumption that  the operator is  bounded on
Lebesgue spaces with indices in $\Lambda_2.$ We denote by $L^\infty_c$ the
space of all compactly supported bounded functions.

\begin{thm}\label{thm:EndPts}
Suppose that an $m$-linear singular integral operator of convolution type $T$ with kernel $K$ is bounded from $L^{q_1}\times \dots \times L^{q_m}$  to $L^2$ with norm at most $B$ for all $(q_1,\ldots,q_m)\in\Lambda_2.$
If $K$ satisfies the coordinate-type H\"ormander condition \eqref{equ:HormCond}, then
\begin{equation}\label{equ:LInf2BMO}
\| T(f_1, \dots , f_m) \|_{BMO} \lesssim  (A+B) \| f_1\|_{L^\infty} \cdots  \| f_m\|_{L^\infty}
\end{equation}
for all   $f_j$ in $L^\infty_c$. Moreover, $T$ has a bounded extension  which satisfies
\begin{equation}\label{equ:LWL1LInf}
\| T(f_1, \dots , f_m) \|_{L^{1,\infty}} \lesssim (A+B) \| f_i\|_{L^1}\prod_{\substack{i=1\\k\ne i}}^m\| f_k\|_{L^\infty}
\end{equation}
for all $1\le i\le m $,  $f_i\in L^1$, and $f_k\in L^\infty_c$ for $k\neq i$.
\end{thm}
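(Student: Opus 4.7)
The plan is to run the standard proof of the multilinear Calder\'on--Zygmund theorem, using the coordinate-type H\"ormander condition \eqref{equ:HormCond} in place of pointwise kernel derivative bounds. A key observation is that, for the $BMO$ estimate, \eqref{equ:HormCond} is needed to control only a single piece of the multilinear decomposition.

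For \eqref{equ:LInf2BMO}, fix a ball $B = B(x_0, r)$ and decompose each input as $f_j = f_j^0 + f_j^\infty$ with $f_j^0 = f_j\chi_{B^*}$, $B^* = 3B$. Expanding multilinearly produces $2^m$ pieces $T(f^E)$ indexed by $E \subset \{1,\ldots,m\}$, where $f_j^E = f_j^0$ if $j \in E$ and $f_j^E = f_j^\infty$ otherwise. For every $E \ne \emptyset$, pick $j_0 \in E$ and invoke the hypothesis with the element of $\Lambda_2$ whose $j_0$-th entry equals $2$; together with $\|f_{j_0}^0\|_{L^2} \lesssim |B|^{1/2}\|f_{j_0}\|_{L^\infty}$ this yields $\|T(f^E)\|_{L^2} \lesssim B\, |B|^{1/2}\prod_j \|f_j\|_{L^\infty}$, hence by Cauchy--Schwarz $|B|^{-1}\int_B |T(f^E)| \lesssim B\prod_j\|f_j\|_{L^\infty}$. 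The H\"ormander condition is needed only for the all-global piece $E = \emptyset$: subtract $c_B := T(f_1^\infty,\ldots,f_m^\infty)(x_0)$ and write
\begin{equation*}
T(f^\emptyset)(x) - c_B = \int [K(x - \vec y) - K(x_0 - \vec y)] \prod_{j=1}^m f_j^\infty(y_j)\, d\vec y,
\end{equation*}
then telescope the bracket into $m$ consecutive one-coordinate changes. In the $k$-th step only coordinate $k$ of $K$ moves from $x_0 - y_k$ to $x - y_k$; since $y_k \notin B^*$ one has $|x_0 - y_k| > 2r \ge 2|x - x_0|$, so after an affine translation this is precisely the integration region in \eqref{equ:HormCond}. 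Applying \eqref{equ:HormCond} in coordinate $k$, with the other $m-1$ integrations absorbed into $\prod_{j\ne k}\|f_j\|_{L^\infty}$ by Fubini, yields the bound $\lesssim A\prod_j\|f_j\|_{L^\infty}$ uniformly in $x \in B$. Summing over $k$ and over all $E$ gives \eqref{equ:LInf2BMO} with constant $\lesssim A + B$.

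For \eqref{equ:LWL1LInf}, by symmetry take $i = 1$, set $\alpha_0 = \alpha/\prod_{k \ge 2}\|f_k\|_{L^\infty}$, and apply a Calder\'on--Zygmund decomposition of $f_1$ at height $\alpha_0$: $f_1 = g + \sum_j b_j$ with $\|g\|_{L^\infty} \lesssim \alpha_0$, $\|g\|_{L^1} \le \|f_1\|_{L^1}$, $\supp b_j \subset Q_j$, $\int b_j = 0$, $\|b_j\|_{L^1} \lesssim \alpha_0|Q_j|$, and $\sum_j|Q_j| \lesssim \|f_1\|_{L^1}/\alpha_0$. For the good part, Chebyshev together with the hypothesized $L^2\times L^\infty\times\cdots\to L^2$ bound applied to $g$ (using $\|g\|_{L^2}^2 \le \alpha_0\|f_1\|_{L^1}$) controls $|\{|T(g, f_2,\ldots,f_m)|>\alpha/2\}|$. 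Let $\Omega = \bigcup_j Q_j^*$, so $|\Omega| \lesssim \|f_1\|_{L^1}/\alpha_0$, already of the right size. For $x \notin \Omega$, the cancellation $\int b_j = 0$ gives
\begin{equation*}
T(b_j, f_2, \ldots, f_m)(x) = \int [K(x - y_1, x - y_2, \ldots) - K(x - c_j, x - y_2, \ldots)] b_j(y_1) \prod_{k \ge 2} f_k(y_k)\, d\vec y,
\end{equation*}
where $c_j$ is the center of $Q_j$; the affine substitution $u_1 = c_j - y_1$, $v_k = x - y_k$ ($k \ge 2$), combined with Fubini, identifies $\int_{\Omega^c}|T(b_j, f_2,\ldots,f_m)|\,dx$ with the $k = 1$ summand of \eqref{equ:HormCond} integrated against $|b_j(y_1)|\prod_{k\ge 2}\|f_k\|_{L^\infty}$. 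Summing in $j$ yields $\int_{\Omega^c}|T(b,f_2,\ldots,f_m)| \lesssim A\|f_1\|_{L^1}\prod_{k\ge 2}\|f_k\|_{L^\infty}$, and together with the bound on $|\Omega|$ this closes the weak-type estimate.

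The principal technical point is verifying, in the BMO telescoping, that each single-coordinate change of $K$ has its moving variable supported in the H\"ormander region $\{|y_k|>2|x-x_0|\}$: this works precisely because restricting to $E = \emptyset$ forces every $y_k$ to lie outside $B^*$. The hypothesis $f_j \in L^\infty_c$ is used only to make all integrals absolutely convergent so that Fubini and the telescoping are justified; since the resulting constants are independent of the size of the support, standard density arguments provide the bounded extension to general $L^\infty$ and $L^1$ inputs asserted in the theorem.
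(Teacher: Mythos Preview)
Your proof is correct and follows essentially the same route as the paper: the same local/global decomposition for the $BMO$ estimate (with the $\Lambda_2$ hypothesis handling every piece having at least one local factor, and the coordinate H\"ormander condition used via telescoping only on the all-global piece), and the same Calder\'on--Zygmund decomposition in the single $L^1$ variable for the weak-type estimate. The one quantitative slip is your choice of CZ height $\alpha_0=\alpha/\prod_{k\ge 2}\|f_k\|_{L^\infty}$: with this height the good-part Chebyshev/$L^2$ bound produces a constant of order $B^2$, not $B$, so you end up with $A+B^2$ rather than the stated $A+B$; the paper avoids this by taking the height to be $B^{-1}\lambda$ (after normalizing all norms to $1$), which balances the good-part estimate and $|\Omega|$ so that both contribute $O(B/\lambda)$.
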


\begin{proof}
Fix a cube $Q$. To prove \eqref{equ:LInf2BMO} we   show that there exists a constant $C_Q$ such that
\begin{equation}\label{equ:AvgQTfj}
 \frac1{|Q|}\int_{Q}\mid T(f_1,\ldots,f_m)(x)-C_Q\mid dx \lesssim (A+B) \| f_1\|_{L^\infty} \cdots  \| f_m\|_{L^\infty}.
\end{equation}

We decompose each function $f_j = f_j^0+f_j^1$, where $f_j^0=f_j \chi_{Q^*}$ and $f_j^1 = f_j\chi_{ (Q^* )^c}.$  Let $F$ be the set of the $2^m$ sequences of length $m$ consisting of zeros and ones.  We   claim that for each   sequence $\vec k=(k_1, \dots ,k_m)$ in
$F$ there is a constant $C_{\vec k}$ such that
\begin{equation}\label{equ:CkEST}
 \frac{1}{|Q|} \int_Q |T(f_1^{k_1}, \dots , f_m^{k_m})(x) -C_{\vec k}| \, dx \lesssim (A+B) \| f_1\|_{L^\infty} \cdots  \| f_m\|_{L^\infty}.
 \end{equation}
Assuming the validity of the preceding claim we obtain \eqref{equ:AvgQTfj} with $C_Q = \sum_{\vec k\in F} C_{\vec k}.$

Next, we want to establish \eqref{equ:CkEST} for each $\vec{k}\in F.$ If $\vec k = (k_1,\dots , k_m)$ has at least one zero entry   we pick $C_{\vec k}=0$. Without loss of generality, we may assume that $k_1=0.$ Since $T$ maps $L^{2}\times L^{\infty} \dots \times L^{\infty}$  to $L^2,$ we have
\begin{align*}
 \frac{1}{|Q|} \int_Q |T(f_1^{k_1}, \dots , f_m^{k_m})(x)  | \, dx & \le
 \bigg( \frac{1}{|Q|} \int_Q |T(f_1^{k_1}, \dots , f_m^{k_m})(x)  |^2 \, dx \bigg)^{\frac 12} \\
 & \le \bigg( \frac{1}{|Q|} \int_{\mathbb R^n} |T(f_1^{k_1}, \dots , f_m^{k_m})(x)  |^2 \, dx \bigg)^{\frac 12} \\
  & \le  B\, |Q|^{-\frac12} \| f_1^0\|_{L^{2}}\|f_2^{k_2}\|_{L^{\infty}} \cdots  \| f_m^{k_m}\|_{L^{\infty}}  \\
  & \le  B\, |Q|^{-\frac12} |Q^*|^{\frac 1{2}}  \| f_1\|_{L^\infty} \cdots  \| f_m\|_{L^\infty}  \\
 & \lesssim B \| f_1\|_{L^\infty} \cdots  \| f_m\|_{L^\infty} \, .
\end{align*}

Now suppose that
$\vec k = (1,\dots , 1)$. Set $C_{\vec k} =T(f_1^{k_1}, \dots , f_m^{k_m})(x_Q ),$ where $x_Q$ is the center of the cube $Q.$ Then we have
\begin{align*}
& \frac{1}{|Q|} \int_Q |T(f_1^{1}, \dots , f_m^{1})(x) -C_{\vec k} |dx \\
 & \le
  \frac{1}{|Q|} \int_Q \int_{\mathbb R^{nm}} |K(x\!-\!y_1,\ldots,x\!-\!y_m)-K(x_Q\!- \!y_1,\ldots,x_Q\!-\!y_m)| \prod_{i=1}^m|f_i^{1}(y_i)| d\vec ydx   \\
 & \le
\frac{\prod\limits_{i=1}^m\|f_i\|_{L^{\infty}}}{|Q|} \int_Q
 \sum_{k=1}^m\int_{\Gamma^k_{x-x_Q}} |K(y_1,\ldots,(x\!-\!x_Q)\!-\!y_k,\ldots,y_m)-K(y_1,\ldots,y_m)|d\vec ydx \\
 & \lesssim A \| f_1\|_{L^\infty} \cdots  \| f_m\|_{L^\infty} \, .
\end{align*}

This completes the proof of \eqref{equ:LInf2BMO} and we
are left with establishing \eqref{equ:LWL1LInf}. Fix $\lambda>0.$
It is enough to show that
$$
|\{x\in\mathbb R^{n}\ :\ |T(f_1,\ldots,f_m)(x)|>2\lambda\} | \lesssim (A+B)\frac{1}{\lambda}\norm{f_1}_{L^1}\norm{f_2}_{L^\infty}\cdots\norm{f_m}_{L^\infty}.
$$
By scaling, we may assume that $\norm{f_1}_{L^1}=\norm{f_2}_{L^{\infty}}=\cdots=\norm{f_m}_{L^{\infty}}=1.$ Let $\delta$ be a positive number chosen later and let $f_1=g_1+b_1$ be the Calder\'on-Zygmund decomposition at height $\delta\lambda,$ and $b_1 = \sum_{j}b_{1,j},$ where $b_{1,j}$ are functions supported in the (pairwise disjoint) cubes $Q_{j}$ such that
\begin{gather*}
\supp(b_{1,j})\subset Q_j,\quad \int b_{1,k}(x)dx = 0,\\
\norm{b_{1,j}}_{L^1} \le 2^{n+1}\delta\lambda |Q_{j}|,\quad \sum_{j}|Q_{j}|\le \frac1{\delta\lambda},\\
\norm{g_1}_{L^{\infty}}\le 2^{n}\delta\lambda,\quad \norm{g_1}_{L^1}\le 1.
\end{gather*}
Now we can estimate
\begin{align*}
 |\{x\in\mathbb R^{n}\ :\ |T(f_1,\ldots,f_m)(x)|>2\lambda\} |
 \le& |\{x\in\mathbb R^{n}\ :\ |T(g_1,\ldots,f_m)(x)|>\lambda\} |\\
 &+|\{x\in\mathbb R^{n}\ :\ |T(b_1,\ldots,f_m)(x)|>\lambda\} |.
\end{align*}
Since $T$ maps $L^2\times L^{\infty}\times\cdots\times L^{\infty},$ the first part can be controlled by
\begin{align*}
  |\{x\in\mathbb R^{n}\ :\ |T(g_1,\ldots,f_m)(x)|>\lambda\} |
  \le& \frac1{\lambda^2}\int_{\mathbb R^n}|T(g_1,f_2,\ldots,f_m)(x)|^2dx\\
  \le& \frac{B^2}{\lambda^2}\norm{g_1}_{L^2}^2\norm{f_2}_{L^{\infty}}^2\cdots\norm{f_m}_{L^{\infty}}^2\\
  \le& \frac{2^nB^2\delta}{\lambda^2}.
\end{align*}
To estimate the second part, we set $G = \cup_{j}Q^*_j.$ Then we have
\begin{align*}
  |\{x\in\mathbb R^{n}\ :\ |T(b_1,\ldots,f_m)(x)|>\lambda\} |
  \le& |G|+ |\{x\in G^c\ :\ |T(b_1,\ldots,f_m)(x)|>\lambda\} |\\
  \le& |G| + \frac1{\lambda}\sum_j\int_{\big(Q^*_j\big)^c}|T(b_{1,j},\ldots,f_m)(x)|dx.
\end{align*}
Notice that
$$
|G| \le \sum_j|Q^*_j| \lesssim \sum_{j}|Q_j|\le \frac1{\delta\lambda}.
$$
Denote by $c_j$ the center of the cube $Q_j.$ Invoking condition \eqref{equ:HormCond} yields
\begin{align*}
  &\int_{ (Q^*_j )^c}|T(b_{1,j},\ldots,f_m)(x)|dx\\
  \le& \int_{ (Q^*_j )^c}\bigg|\int K(x-y_1,\ldots,x-y_m)b_{1j}(y_1)f_2(y_2)\cdots f_m(y_m)d\vec{y} dx\bigg|\\
  \le& \int_{ (Q^*_j )^c}\!\!\bigg|\int \big[K(x\!-\!y_1,y_2,\ldots,y_m)-K(x\!-\!c_j,y_2,\ldots,y_m)\big]b_{1j}(y_1)\prod_{i=2}^mf_i(x\!-\!y_i)d\vec{y} dx\bigg|\\
  \le & \prod_{i=2}^m\norm{f_i}_{L^{\infty}}
  \int_{ (Q^*_j )^c} \int \big|K(x-y_1,y_2\ldots,y_m)-K(x-c_j,y_2,\ldots,y_m)\big|
  |b_{1j}(y_1)|d\vec{y} dx\\
  \le &
  \int_{Q_j}\left\{
  \int_{\Gamma^1_{y_1-c_j}} \big|K(y_1-z_1,z_2\ldots,z_m)-K(z_1,z_2,\ldots,z_m)\big|d\vec{z}\right\}|b_{1j}(y_1)|dy_1\\
  \le & A\norm{b_{1,j}}_{L^1}.
\end{align*}
Therefore
\begin{align*}
  \frac1{\lambda}\sum_j\int_{ (Q^*_j )^c}|T(b_{1,j},\ldots,f_m)(x)|dx
  \le & \frac{A}{\lambda}\sum_j\norm{b_{1,j}}_{L^1}
  \le \frac{2^{n+1}A}{\lambda}.
\end{align*}
Choosing $\delta = B^{-1}$ and combining the preceding inequalities  we obtain
$$
|\{x\in\mathbb R^{n}\ :\ |T(f_1,\ldots,f_m)(x)|>2\lambda\} |
\le \frac1{\lambda}(2^nB+B+2^{n+1}A)\le 2^{n+1}(A+B)\frac1{\lambda},
$$
which yields \eqref{equ:LWL1LInf}.
\end{proof}

This result allows us to obtain intermediate estimates between
 the results in \cite{FuTomi} (in which $2<p_j<\infty$ and $2<p<\infty$) and the results in
\cite{GrMiTo} (in which $1<p_j\le \infty$ and $1<p\le 2$).

\begin{cor}
Let $1<p_j\le \infty$ and $1<p<\infty$ satisfy  $1/p_1+\cdots +1/p_m=1/p$. Assume that \eqref{equ:SigmCond} holds for a   function $\sigma $ on $\mathbb R^{mn}$ where $s_i>n/2$ for all $i$. Then the   multilinear Fourier multiplier operator $T_\sigma$ maps $L^{p_1}\times
\cdots \times L^{p_m} $ to $L^p$
 \end{cor}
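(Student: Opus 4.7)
The strategy is to verify the coordinate-type H\"ormander condition \eqref{equ:HormCond} for $K=\sigma\spcheck$, apply Theorem \ref{thm:EndPts} to extract the endpoint estimates $L^\infty\times\cdots\times L^\infty\to BMO$ and $L^1\times L^\infty\times\cdots\times L^\infty\to L^{1,\infty}$ (with the $L^1$ in any one slot), and then interpolate these endpoints against the $L^2$-based bounds already supplied by Theorem \ref{thm:PLarge}. The hypothesis $L^q\times L^\infty\times\cdots\times L^\infty\to L^2$ required to feed Theorem \ref{thm:EndPts} comes directly from Theorem \ref{thm:PLarge} applied with $p=2$, so only \eqref{equ:HormCond} needs to be established.

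To verify \eqref{equ:HormCond}, I would dyadically decompose $K=\sum_{j\in\mathbb Z}K_j$, where $\widehat{K_j}(\xi)=\sigma(\xi)\widehat\psi(2^{-j}\xi)$. Setting $\tau_j(\xi)=\sigma(2^j\xi)\widehat\psi(\xi)$, the rescaling gives $K_j(\vec y)=2^{jmn}\tau_j\spcheck(2^j\vec y)$, with $\sup_j\|\tau_j\|_{W^{(s_1,\ldots,s_m)}}\le A$ and each $\tau_j$ supported in the fixed annulus $\{1/2\le|\xi|\le 2\}$. Since $s_i>n/2$, Lemma \ref{lem:LInfL2} together with Cauchy--Schwarz yields uniform-in-$j$ bounds on $\|\langle y_1\rangle^{s_1}\cdots\langle y_m\rangle^{s_m}\tau_j\spcheck\|_{L^1}$; multiplying $\tau_j$ by a bounded smooth function of $\xi_k$ (harmless on the fixed annular support) gives the same bound for $\partial_{y_k}\tau_j\spcheck$. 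For fixed $x\in\mathbb R^n$ and index $k$, split the $j$-sum at $2^j|x|\sim 1$: when $2^j|x|\le 1$ apply the fundamental theorem of calculus in $y_k$ together with the gradient estimate to gain a factor $\lesssim 2^j|x|$; when $2^j|x|>1$ estimate the two kernel terms separately via the weighted $L^1$ bound, using $|y_k|>2|x|$ to produce decay $\lesssim (2^j|x|)^{-(s_k-n/2)}$. Summing the two geometric series in $2^j|x|$ and then over $k$ yields \eqref{equ:HormCond} with constant $\lesssim A$.

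With \eqref{equ:HormCond} in hand, Theorem \ref{thm:EndPts} supplies the endpoint $L^\infty\times\cdots\times L^\infty\to BMO$ bound, together with its weak-type $L^{1,\infty}$ counterparts. Combining the $BMO$ endpoint with the $L^2\times L^\infty\times\cdots\times L^\infty\to L^2$ bound from Theorem \ref{thm:PLarge} by complex interpolation (using that $(BMO,L^2)_\theta=L^p$ for $p\in(2,\infty)$) produces $L^p\times L^\infty\times\cdots\times L^\infty\to L^p$ for every $2<p<\infty$; standard multilinear interpolation between these bounds, their cyclic permutations, and the estimates of \cite{GrMiTo} (already covering $1<p\le 2$) fills in the full range $1<p_j\le\infty$, $1<p<\infty$. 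The principal obstacle is the dyadic analysis verifying \eqref{equ:HormCond}: the product-type weight $\prod_i\langle y_i\rangle^{s_i}$ must be leveraged selectively, providing tail control in the distinguished coordinate $y_k$ while retaining integrability in the others, and it is precisely here that the hypothesis $s_i>n/2$ for every $i$ is indispensable.
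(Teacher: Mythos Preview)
Your proposal is correct and follows essentially the same route as the paper: verify the coordinate-type H\"ormander condition \eqref{equ:HormCond} for $K=\sigma\spcheck$, apply Theorem \ref{thm:EndPts}, and then interpolate. The paper simply asserts that \eqref{equ:SigmCond} implies \eqref{equ:HormCond} by freezing all but one variable (referring to \cite{MiTo} for the bilinear case), whereas you have sketched the dyadic argument in some detail; your outline is the standard way to make that implication explicit.

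The only minor divergence is in the interpolation step. The paper interpolates directly between the two endpoints furnished by Theorem \ref{thm:EndPts}, namely \eqref{equ:LInf2BMO} and \eqref{equ:LWL1LInf}, which already yields $L^p\times L^\infty\times\cdots\times L^\infty\to L^p$ for the full range $1<p<\infty$ in one stroke; a further multilinear interpolation among the permutations then gives all $1<p_j\le\infty$. You instead interpolate the $BMO$ endpoint against the $L^2$ bound from Theorem \ref{thm:PLarge} to cover $2<p<\infty$ and invoke \cite{GrMiTo} separately for $1<p\le 2$. Both are valid; the paper's route is slightly more self-contained since it uses only the two conclusions of Theorem \ref{thm:EndPts} and does not need to call back to Theorem \ref{thm:PLarge} or \cite{GrMiTo} for the interpolation.
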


\begin{proof}
  Note that Sobolev condition \eqref{equ:SigmCond} for $\sigma$ implies Hormander condition \eqref{equ:HormCond} for $K=\sigma\spcheck.$ The proof of this implication is standard in the linear case and in the $m$-linear case it follows by freezing all but one variable (in the bilinear case it is contained in \cite{MiTo}). We are now able to apply Theorem \ref{thm:EndPts} to $T_\sigma,$ and hence Corollary \ref{last} follows.
 Interpolating between \eqref{equ:LInf2BMO} and \eqref{equ:LWL1LInf} yields that $T_\sigma$ maps $L^p\times L^\infty\times\cdots\times L^\infty$    to $L^p$ for all $1<p<\infty.$ By symmetry, we deduce that $T_\sigma$ is bounded from $L^{q_1}\times \cdots\times L^{q_m}$    to $L^p$ for all $(q_1,\ldots,q_m)\in\Lambda_p$ and $1<p<\infty.$ Once again, by interpolation, we have that $T_\sigma$ maps from $L^{p_1}\times\cdots L^{p_m}$ to $L^p$ for all \linebreak $1<p_1,\ldots,p_m\le \infty$ and $1<p<\infty$ such that $\frac1{p_1}+\cdots+\frac1{p_m} = \frac1{p}$ with norm at most a multiple of $A .$
  \end{proof}

 \begin{cor}\label{last}
 Let $\sigma$ be a bounded function on $\mathbb R^{mn}\setminus\{0\}$ which satisfies
 \eqref{equ:SigmCond} with $s_j>n/2$ for all $j=1,\dots , m$. Then we have the estimate
 \begin{equation}\label{equ:LInf2BMO22}
\| T_\sigma(f_1, \dots , f_m) \|_{BMO} \lesssim  A \| f_1\|_{L^\infty} \cdots  \| f_m\|_{L^\infty}
\end{equation}
for all functions $f_j\in L^\infty_c$.
 \end{cor}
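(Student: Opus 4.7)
The plan is to deduce Corollary \ref{last} as a direct consequence of Theorem \ref{thm:EndPts} applied to $T=T_\sigma$. Two hypotheses of that theorem need to be verified: (i) the $L^{q_1}\times\cdots\times L^{q_m}\to L^2$ boundedness of $T_\sigma$ for every $(q_1,\ldots,q_m)\in\Lambda_2$, with operator norm $\lesssim A$, and (ii) the coordinate-type H\"ormander condition \eqref{equ:HormCond} on the kernel $K=\sigma\spcheck$, again with constant $\lesssim A$.

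For (i), I would simply invoke Theorem \ref{thm:PLarge}. Each tuple in $\Lambda_2$ has one entry equal to $2$ and the remaining $m-1$ entries equal to $\infty$, and these exponents satisfy $\sum_j 1/q_j=1/2$. Since $\sigma$ satisfies \eqref{equ:SigmCond}, Theorem \ref{thm:PLarge} immediately yields
$$
\| T_\sigma\|_{L^{q_1}\times\cdots\times L^{q_m}\to L^2}\lesssim A
$$
for all $(q_1,\ldots,q_m)\in \Lambda_2$, giving the bound $B\lesssim A$ required by Theorem \ref{thm:EndPts}.

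For (ii), I would follow the strategy mentioned in the paragraph preceding Corollary \ref{last}: decompose dyadically $\sigma=\sum_{j\in\mathbb Z}\sigma\,\widehat\psi(2^{-j}\cdot)$, so that $K=\sum_j K_j$ with $K_j(\vec y)=2^{jmn}[\sigma(2^j\cdot)\widehat\psi]\spcheck(2^j\vec y)$. Applying Lemma \ref{lem:LInfL2} separately in each single coordinate $y_k$ (with the other coordinates frozen) transfers the product Sobolev control of \eqref{equ:SigmCond} into $L^\infty$-type decay for $\langle y_k\rangle^{s_k} K_j$, and the same reasoning, after multiplying by a factor $e^{2\pi i t\cdot \xi_k}-1$ on the Fourier side, yields the one-coordinate Lipschitz estimate needed to bound the integrand of the $k$-th term of \eqref{equ:HormCond}. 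Since $s_k>n/2$, these estimates are integrable in $y_k$, and the remaining variables are controlled by the trivial bound coming from the other $s_i>n/2$. Summing in $j$ (the dyadic sum converges because of the gap between the scaling and the $n/2$ threshold, exactly as in the standard linear Mikhlin--H\"ormander argument) gives \eqref{equ:HormCond} with constant $\lesssim A$.

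With (i) and (ii) in hand, Theorem \ref{thm:EndPts} directly yields
$$
\|T_\sigma(f_1,\ldots,f_m)\|_{BMO}\lesssim (A+A)\prod_{i=1}^m\|f_i\|_{L^\infty}\lesssim A\prod_{i=1}^m\|f_i\|_{L^\infty}
$$
for all $f_j\in L^\infty_c$, which is \eqref{equ:LInf2BMO22}. The main obstacle is step (ii): although each coordinate estimate reduces to the familiar linear H\"ormander implication (Sobolev regularity $\Rightarrow$ pointwise kernel bounds on dyadic pieces), one must handle all $m$ coordinate directions uniformly and keep the product-type Sobolev norm intact when freezing $m-1$ variables; the rest of the argument is a routine packaging of the previous results.
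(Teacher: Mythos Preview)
Your proposal is correct and follows essentially the same route as the paper: both verify the hypotheses of Theorem~\ref{thm:EndPts} by invoking the $\Lambda_2$ boundedness from Theorem~\ref{thm:PLarge} (or \cite{GrMiTo}) and by noting that the Sobolev condition \eqref{equ:SigmCond} implies the coordinate-type H\"ormander condition \eqref{equ:HormCond} via the standard freeze-all-but-one-variable reduction to the linear case. The paper's proof is terser (it simply asserts the implication \eqref{equ:SigmCond}$\Rightarrow$\eqref{equ:HormCond} as ``standard'' and cites \cite{MiTo} for the bilinear version), while you sketch the dyadic mechanism more explicitly, but the substance is the same.
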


\begin{proof}
  As before     condition \eqref{equ:SigmCond} for $\sigma$ implies   \eqref{equ:HormCond} for $K=\sigma\spcheck.$  Applying Theorem \ref{thm:EndPts} to $T_\sigma $,      Corollary \ref{last} follows.
\end{proof}

\section{Proofs of some technical lemmas}
In this section, we will give the detail proofs of some lemmas that were used in previous sections.
\subsection{The proof of Lemma \ref{lem:RegSobN}}
For $k\in\mathbb Z,k\ge 2$ denote $$F_k = \left\{y\in\mathbb R^{mn} :\ 2^{k-1}-2\le {\left\vert{y}\right\vert}\le 2^{k+1}+2\right\}.$$ Fix $x=(x_1,\ldots,x_m)\in\mathbb R^{mn}.$ Then we have
  \begin{align}
  (\varphi_\epsilon*\sigma)(2^jx)\widehat\psi(x) =& \left\{\int \epsilon^{-mn}\varphi
  \big(  {\epsilon}^{-1}y\big)\sigma(2^jx-y)dy\right\}\widehat\psi(x)\notag\\
  =&\left\{\int \epsilon^{-mn}2^{jmn}\varphi\big({\epsilon}^{-1} {2^jy} \big)\sigma(2^j(x-y))dy\right\}\widehat\psi(x)\notag\\
  =& \sum_{k\in\mathbb Z}\left\{\int \varphi_{\epsilon2^{-j}}(y)
  \sigma(2^j(x-y))\widehat{\psi}(2^{-k}(x-y))dy\right\}\widehat\psi(x)\notag\\
  \label{ali:ksmall}
  =&\sum_{k\le -3}\Big\{\int \varphi_{\epsilon2^{-j}}(x-y)
  \sigma(2^jy)\widehat{\psi}(2^{-k}y)dy\Big\}\widehat\psi(x)
  \\
  \label{ali:kmedium}
  &+\sum_{{\left\vert{k}\right\vert}\le 2}\int \varphi_{\epsilon2^{-j}}(y)
  \sigma(2^j(x-y))\widehat{\psi}(2^{-k}(x-y))\widehat\psi(x)dy
  \\
  \label{ali:klarge}
  &+\sum_{k\ge 3}\Big\{\int \varphi_{\epsilon2^{-j}}(x-y)
  \sigma(2^jy)\widehat{\psi}(2^{-k}y)dy\Big\}\widehat\psi(x).
    \end{align}
The $W^{(s_1,\ldots,s_m)}$ norm of   term \eqref{ali:kmedium} can be estimated easily by
\begin{eqnarray*}
&&\hspace{-.4in}\sum_{{\left\vert{k}\right\vert}\le 2} \int {\left\vert{\varphi_{\epsilon2^{-j}}(y)}\right\vert}
  \left\Vert{\sigma(2^j(\cdot-y))\widehat{\psi}(2^{-k}(\cdot-y))\widehat\psi}
  \right\Vert_{W^{(s_1,\ldots,s_m)}}dy\\
 & \le &\sum_{{\left\vert{k}\right\vert}\le 2}\int {\left\vert{\varphi_{\epsilon2^{-j}}(y)}\right\vert}
  \left\Vert{\sigma(2^j(\cdot-y))\widehat{\psi}(2^{-k}(\cdot-y))}\right\Vert_{W^{(s_1,\ldots,s_m)}}
  \left\Vert{\widehat\psi}\right\Vert_{W^{(s_1,\ldots,s_m)}}dy\\
 & \lesssim &\sum_{{\left\vert{k}\right\vert}\le 2}\left\Vert{\sigma(2^{j+k}\cdot)\widehat{\psi}}\right\Vert_{W^{(s_1,\ldots,s_m)}}\int {\left\vert{\varphi_{\epsilon2^{-j}}(y)}\right\vert}dy
\lesssim \sup_{j\in\mathbb Z}\left\Vert{\sigma(2^j\cdot)\widehat\psi}\right\Vert_{W^{(s_1,\ldots,s_m)}},
\end{eqnarray*}
in which the second last inequality follows from the fact \cite[Proposition A.2]{FuTomi} that
$$
\left\Vert{fg}\right\Vert_{W^{(s_1,\ldots,s_m)}}\lesssim \left\Vert{f}\right\Vert_{W^{(s_1,\ldots,s_m)}}\left\Vert{g}\right\Vert_{W^{(s_1,\ldots,s_m)}},
$$
when $f,g\in W^{(s_1,\ldots,s_m)}$   for $s_1,\ldots,s_m>\frac{n}2.$

Now fix integer numbers $l_i\ge s_i$ and set $l=l_1+\cdots+l_m.$ Since ${\left\Vert{f}\right\Vert}_{W^{(s_1,\ldots,s_m)}}\le {\left\Vert{f}\right\Vert}_{W^l},$ the $W^{(s_1,\ldots,s_m)}$ norm of the   term in  \eqref{ali:ksmall} is bounded  by
\begin{eqnarray*}
&&\hspace{-.2in}\sum_{k\le -3}\left\Vert{\bigg\{\int \varphi_{\epsilon2^{-j}}(\cdot-y)
\sigma(2^jy)\widehat{\psi}(2^{-k}y)dy\bigg\}\widehat\psi}\,\right\Vert_{W^l}\\
&\lesssim &\sum_{k\le -3}\sum_{{\left\vert{\alpha}\right\vert}+{\left\vert{\beta}\right\vert}\le l}\left\Vert{\bigg\{\int (\epsilon2^{-j})^{-{\left\vert{\alpha}\right\vert}}(\partial^{\alpha}\varphi)_{\epsilon2^{-j}}(\cdot-y)
\sigma(2^jy)\widehat{\psi}(2^{-k}y)dy\bigg\} \partial^{\beta}\widehat\psi }\, \right\Vert_{L^2}\\
&=& \sum_{k\le -3}\sum_{{\left\vert{\alpha}\right\vert}+{\left\vert{\beta}\right\vert}\le l}\left\Vert{\bigg\{\int_{\frac14\le {\left\vert{y}\right\vert}\le \frac94}  \frac{(\partial^{\alpha}\varphi)_{\epsilon2^{-j}}(y)}{(\epsilon2^{-j})^{ {\left\vert{\alpha}\right\vert}}  }
\sigma(2^j(\cdot-y))\widehat{\psi}(2^{-k}(\cdot-y))dy\bigg\} \partial^{\beta}\widehat\psi }\right\Vert_{L^2}\\
&\lesssim& \sum_{k\le -3}\sum_{{\left\vert{\alpha}\right\vert}\le l}\int \Big(\frac{{\left\vert{y}\right\vert}}{\epsilon2^{-j}}\Big)^{{\left\vert{\alpha}\right\vert}}
{\left\vert{(\partial^{\alpha}\varphi)_{\epsilon2^{-j}}(y)}\right\vert}
\left\Vert{\sigma(2^j(\cdot-y))\widehat{\psi}(2^{-k}(\cdot-y))}\right\Vert_{L^2}dy\\
&\lesssim& \sum_{k\le -3}2^{\frac{kn}2}
\left\Vert{\sigma(2^{j+k}\cdot)\widehat{\psi}}\right\Vert_{L^2}\sum_{{\left\vert{\alpha}\right\vert}\le l}\int \Big(\frac{{\left\vert{y}\right\vert}}{\epsilon2^{-j}}\Big)^{{\left\vert{\alpha}\right\vert}} \left\vert{(\partial^{\alpha}\varphi)_{\epsilon2^{-j}}(y)}\right\vert dy\\
&\lesssim& \sum_{k\le -3}2^{\frac{kn}2}
\left\Vert{\sigma(2^{j+k}\cdot)\widehat{\psi}}\right\Vert_{W^s}\sum_{{\left\vert{\alpha}\right\vert}\le l}\int {\left\vert{y}\right\vert}^{{\left\vert{\alpha}\right\vert}}
{\left\vert{(\partial^{\alpha}\varphi)(y)}\right\vert}dy \\
&\lesssim& \sup_{j\in\mathbb Z}{\left\Vert\sigma(2^j\cdot)\widehat\psi\right\Vert}_{W^{(s_1,\ldots,s_m)}}.
\end{eqnarray*}

Finally, we deal with term \eqref{ali:klarge}. We have
\begin{eqnarray*}
&&\hspace{-.3in}\left\Vert{\sum_{k\ge 3}\Big\{\int \varphi_{\epsilon2^{-j}}(\cdot-y)
\sigma(2^jy)\widehat{\psi}(2^{-k}y)dy\Big\}\widehat\psi}\,\right\Vert_{W^{(s_1,\ldots,s_m)}}\\
&\le&
\sum_{k\ge 3}\left\Vert{\Big\{\int \varphi_{\epsilon2^{-j}}(\cdot-y)
\sigma(2^jy)\widehat{\psi}(2^{-k}y)dy\Big\}\widehat\psi}\, \right\Vert_{W^l}\\
&\lesssim& \sum_{k\ge 3}\sum_{{\left\vert{\alpha}\right\vert}+{\left\vert{\beta}\right\vert}\le l}\left\Vert{\Big\{\int (\epsilon2^{-j})^{-{\left\vert{\alpha}\right\vert}}(\partial^{\alpha}\varphi)_{\epsilon2^{-j}}(\cdot-y)
\sigma(2^jy)\widehat{\psi}(2^{-k}y)dy\Big\} \partial^{\beta}\widehat\psi }\right\Vert_{L^2}\\
&=& \sum_{k\ge 3}\sum_{{\left\vert{\alpha}\right\vert}+{\left\vert{\beta}\right\vert}\le l}\left\Vert{\Big\{\int_{F_k} (\epsilon2^{-j})^{-{\left\vert{\alpha}\right\vert}}(\partial^{\alpha}\varphi)_{\epsilon2^{-j}}(y)
\sigma(2^j(\cdot-y))\widehat{\psi}(2^{-k}(\cdot-y))dy\Big\} \partial^{\beta}\widehat\psi }\, \right\Vert_{L^2}\\
&\lesssim& \sum_{k\ge 3}\sum_{{\left\vert{\alpha}\right\vert}+{\left\vert{\beta}\right\vert}\le l}\int_{F_k}\!\!\! %2^{-k{\left\vert{\alpha}\right\vert}}
\Big(\frac{{\left\vert{y}\right\vert}}{2^k\epsilon2^{-j}}  \!  \Big)^{{\left\vert{\alpha}\right\vert}}
{\left\vert{(\partial^{\alpha}\varphi)_{\epsilon2^{-j}}(y)}\right\vert}
\left\Vert{\sigma(2^j(\cdot-y))\widehat{\psi}(2^{-k}(\cdot-y)) \partial^{\beta}\widehat\psi }\,
\right\Vert_{L^2}dy\\
&=& \sum_{k\ge 3}\sum_{{\left\vert{\alpha}\right\vert}+{\left\vert{\beta}\right\vert}\le l}\int_{F_k} %2^{-k{\left\vert{\alpha}\right\vert}}
 \Big(\frac{{\left\vert{y}\right\vert}}{2^k\epsilon2^{-j}}\Big)^{{\left\vert{\alpha}\right\vert}}
{\left\vert{(\partial^{\alpha}\varphi)_{\epsilon2^{-j}}(y)}\right\vert}
\left\Vert{\sigma(2^j\cdot)\widehat{\psi}(2^{-k}\cdot)(\partial^{\beta}\widehat\psi)(\cdot+y)}
\right\Vert_{L^2}dy\\
&\le & \sum_{k\ge 3}\sum_{{\left\vert{\alpha}\right\vert}+{\left\vert{\beta}\right\vert}\le l}\int_{F_k} 2^{\frac{kn}2} \Big(\frac{{\left\vert{y}\right\vert}}{\epsilon2^{-j}}\Big)^{{\left\vert{\alpha}\right\vert}}
{\left\vert{(\partial^{\alpha}\varphi)_{\epsilon2^{-j}}(y)}\right\vert}
\left\Vert{\sigma(2^{j+k}\cdot)\widehat{\psi}(\partial^{\beta}\widehat\psi)(2^k\cdot+y)}
\right\Vert_{L^2}dy\\
&\le& \sum_{k\ge 3}\sum_{{\left\vert{\alpha}\right\vert}\le l}\int_{F_k} 2^{\frac{kn}2 } \Big(\frac{{\left\vert{y}\right\vert}}{\epsilon2^{-j}}\Big)^{{\left\vert{\alpha}\right\vert}}
{\left\vert{(\partial^{\alpha}\varphi)_{\epsilon2^{-j}}(y)}\right\vert}
\left\Vert{\sigma(2^{j+k}\cdot)\widehat{\psi}}\right\Vert_{L^2(B(-2^{-k}y,2^{1-k}))}dy\\
&\lesssim& \sum_{{\left\vert{\alpha}\right\vert}\le l}\sum_{k\ge 3}
\left\Vert{\sigma(2^{j+k}\cdot)\widehat{\psi}}\, \right\Vert_{L^\infty} %2^{-k{\left\vert{\alpha}\right\vert}}
\int_{F_k} \Big(\frac{{\left\vert{y}\right\vert}}{\epsilon2^{-j}}\Big)^{{\left\vert{\alpha}\right\vert}} {\left\vert{(\partial^{\alpha}\varphi)_{\epsilon2^{-j}}(y)}\right\vert}dy\\
&\le& \sum_{{\left\vert{\alpha}\right\vert}\le l}\sum_{k\ge 3}
\left\Vert{\sigma(2^{j+k}\cdot)\widehat{\psi}}\, \right\Vert_{W^{(s_1,\ldots,s_m)}}
%2^{-k{\left\vert{\alpha}\right\vert}}
\int_{F_k} \Big(\frac{{\left\vert{y}\right\vert}}{\epsilon2^{-j}}\Big)^{{\left\vert{\alpha}\right\vert}} {\left\vert{(\partial^{\alpha}\varphi)_{\epsilon2^{-j}}(y)}\right\vert}dy\\
&\lesssim& \sup_{k\in\mathbb Z}\left\Vert{\sigma(2^{j+k}\cdot)\widehat{\psi}}\, \right\Vert_{W^{(s_1,\ldots,s_m)}}
\sum_{{\left\vert{\alpha}\right\vert}\le l}\sum_{k\ge 3}
\int_{F_k} \Big(\frac{{\left\vert{y}\right\vert}}{\epsilon2^{-j}}\Big)^{{\left\vert{\alpha}\right\vert}} {\left\vert{(\partial^{\alpha}\varphi)_{\epsilon2^{-j}}(y)}\right\vert}dy\\
&\lesssim& \sup_{j\in\mathbb Z}{\left\Vert\sigma(2^j\cdot)\widehat\psi\, \right\Vert}_{W^{(s_1,\ldots,s_m)}}.
\end{eqnarray*}
The proof of the lemma is now complete.
\subsection{The proof of Lemma \ref{lem:KeyLem}}
Before verifying Lemma \ref{lem:KeyLem}, we mention   approaches that were used by other authors. First, with assumption on the kernel
  $$
  {\left\vert{\partial_{y_0,\ldots,y_m}^{\alpha}K(y_0,\ldots,y_m)}\right\vert}\le A\left(\sum_{k,l=0}^m{\left\vert{y_k-y_l}\right\vert}\right)^{-mn-{\left\vert{\alpha}\right\vert}}
  $$
  for all ${\left\vert{\alpha}\right\vert}\le N,$ Grafakos and Kalton \cite{GrKal} showed that estimate \eqref{equ:BiFuncs} holds for the corresponding multilinear singular integral operator with
  $$
  b_i(x)=\dfrac{{\left\vert{Q_i}\right\vert}^{1-\frac{1}{p_i}+\frac{N+1}{mn}}}
  {\Big({\left\vert{x-c_i}\right\vert}+\ell(Q_i)\Big)^{n+\frac{N+1}{m}}}.
  $$
Miyachi and Tomita \cite{MiTo} constructed functions $b_i$ satisfying Lemma \ref{lem:KeyLem} in the bilinear case. We adapt these techniques to prove the key lemma in the  multilinear setting.

  Now we start the proof of Lemma \ref{lem:KeyLem}.
   We may assume that $J_0=\left\{1,\ldots,r\right\}$ for some $1\le r\le m.$ Fix
   $$x\in \Big(\bigcap_{i=r+1}^mQ_i^*\Big)\setminus\bigcup_{i=1}^rQ_i^*$$
   (when $r=m,$ just fix $x\in \mathbb R^n\setminus\bigcup_{i=1}^mQ_i^*$).
   Now we rewrite $T_{\sigma}(a_1,\ldots,a_m)(x)$ as
    $$
    T_{\sigma}(a_1,\ldots,a_m)(x) = \sum_{j\in\mathbb Z}g_j(x),
    $$
    where
    $$
    g_j(x) = \int_{\mathbb R^{mn}}2^{jmn}K_j(2^j(x-y_1), \ldots, 2^j(x-y_m))a_1(y_1)\cdots a_m(y_m)dy_1\cdots dy_m
    $$
    with $K_j = \big(\sigma(2^j\cdot)\widehat\psi\, \big)\spcheck.$
    Let $c_i$ be the center of the cube $Q_i$ $(1\le i\le m).$
    For $1\le i\le r,$ since $x\notin Q_i^*$ and $y_i\in Q_i,$ ${\left\vert{x-c_i}\right\vert}\approx {\left\vert{x-y_i}\right\vert}.$
    Fix $1\le k\le r.$
    Using Lemma \ref{lem:LInfL2}  with $s_i>\frac{n}{2} $ and applying the Cauchy-Schwarz inequality we obtain
    \begin{align}
      \prod_{i=1}^r&\left<2^j(x-c_i)\right>^{s_i}{\left\vert{g_j(x)}\right\vert}\notag\\
      \lesssim &\ 2^{jmn}\!\! \! \int\limits_{Q_1\times\cdots\times{Q_{m}}}
      \prod_{i=1}^r\left<2^j(x-y_i)\right>^{s_i}{\left\vert{K_j(2^j(x-y_1),\ldots,2^j(x-y_m))}\right\vert} \prod_{i=1}^m\norm{a_i}_{L^{\infty}}d\vec y\notag\\
      \le&\ 2^{jmn}\!\! \! \int\limits_{Q_1\times\cdots\times{Q_{m}}}
      \prod_{i=1}^r\left<2^j(x-y_i)\right>^{s_i}{\left\vert{K_j(2^j(x-y_1),\ldots,2^j(x-y_m))}\right\vert}
      \prod_{i=1}^m{\left\vert{Q_i}\right\vert}^{-\frac1{p_i}}d\vec y\notag\\
      \le&\ 2^{jrn}\prod_{i=1}^m{\left\vert{Q_i}\right\vert}^{-\frac1{p_i}}
      \int\limits_{Q_1\times\cdots\times{Q_r}\times \mathbb R^{(m-r)n}}
      \prod_{i=1}^r\left<2^j(x-y_i)\right>^{s_i}\times\notag\\
      &\times{\left\vert{K_j(2^j(x-y_1),\ldots,2^j(x-y_r),y_{r+1},\ldots,y_m)}\right\vert}dy_1\cdots dy_rdy_{r+1}\cdots dy_m\notag\\
      \le &\  2^{jrn}\prod_{i=1}^r{\left\vert{Q_i}\right\vert}^{1-\frac1{p_i}}
      \prod_{i=r+1}^m{\left\vert{Q_i}\right\vert}^{-\frac1{p_i}}\int_{\mathbb R^{(m-r)n}}
      \int\limits_{Q_k} {\left\vert{Q_k}\right\vert}^{-1}\left<2^j(x-y_k)\right>^{s_k}\times\notag\\
      &\times\norm{\prod_{\substack{i=1\\i\ne k}}^{r}\left<y_i\right>^{s_i}K_j(y_1,\ldots,y_{k-1},2^j(x-y_k),y_{k+1}, \ldots,y_m)}_{L^{\infty}(dy_1 \cdots \widehat{dy_k} \cdots dy_{r})}
   \hspace{-.8in}   dy_k dy_{r+1}\cdots dy_m\notag\\
      \lesssim &\ 2^{jrn}\prod_{i=1}^r{\left\vert{Q_i}\right\vert}^{1-\frac1{p_i}}
      \prod_{i=r+1}^m{\left\vert{Q_i}\right\vert}^{-\frac1{p_i}}\int_{\mathbb R^{(m-r)n}}
      \int\limits_{Q_k}
      {\left\vert{Q_k}\right\vert}^{-1}\left<2^j(x-y_k)\right>^{s_k}\times\notag\\
      &\times\norm{\prod_{\substack{i=1\\i\ne k}}^{r}\left<y_i\right>^{s_i}K_j(y_1,\ldots,y_{k-1},2^j(x-y_k),y_{k+1}, \ldots,y_m)}_{L^{2}(dy_1 \cdots \widehat{dy_k} \cdots dy_{r})}
      \hspace{-.8in} dy_kdy_{r+1}\cdots dy_m\notag\\
      \lesssim &\ 2^{jrn}\prod_{i=1}^r{\left\vert{Q_i}\right\vert}^{1-\frac1{p_i}}
      \prod_{i=r+1}^m{\left\vert{Q_i}\right\vert}^{-\frac1{p_i}}
      \int\limits_{Q_k}
      {\left\vert{Q_k}\right\vert}^{-1}\left<2^j(x-y_k)\right>^{s_k}\times\notag\\
      &\times\norm{\prod_{\substack{i=1\\i\ne k}}^{m}\left<y_i\right>^{s_i}K_j(y_1,\ldots,y_{k-1},2^j(x-y_k),y_{k+1}, \ldots,y_m)}_{L^{2}(dy_1 \cdots \widehat{dy_k} \cdots dy_m)}
      dy_k\notag\\
      \label{ali:H10Func}
      =&\ 2^{jrn}\prod_{i=1}^r{\left\vert{Q_i}\right\vert}^{1-\frac1{p_i}}h_j^{(k,0)}(x)\prod_{i=r+1}^mb_i(x)
    \end{align}
    for all $x\in (\cap_{i=r+1}^mQ_i^*)\setminus (\cup_{i=1}^rQ_i^*),$
    where
    \begin{align*}
    h_j^{(k,0)}(x) = &\, \dfrac1{{\left\vert{Q_k}\right\vert}}\int\limits_{Q_k}\left<2^j(x-y_k)\right>^{s_k}\\
      & \hspace{-.5in}\times
      \norm{\prod_{\substack{i=1\\i\ne k}}^{m}\left<y_i\right>^{s_i}K_j(y_1,\ldots,y_{k-1},2^j(x-y_k),y_{k+1}, \ldots,y_m)}_{L^{2}(
      dy_1 \cdots \widehat{dy_k} \cdots dy_m)}dy_k
    \end{align*}
    and $b_i(x) = {\left\vert{Q_i}\right\vert}^{-\frac1{p_i}}\chi_{Q_i^*}(x)$ for $r+1\le i\le m.$
    A direct computation gives
    $$
    \norm{h_j^{(k,0)}}_{L^2}\le 2^{-\frac{jn}{2}}{\left\Vert\sigma(2^j\cdot)\widehat\psi\right\Vert}_{W^{(s_1,\ldots,s_m)}} = A 2^{-\frac{jn}{2}}.
    $$
    Using the vanishing moment condition of $a_k$ and Taylor's formula, we write
    $$
    \begin{aligned}
    g_j(x) \, = &\, 2^{jmn}\sum_{{\left\vert{\alpha}\right\vert}=N_k}C_{\alpha}\int_{\mathbb R^{mn}}
    \Big\{\int_0^1(1-t)^{N_k-1}\\
    &\times\partial^{\alpha}_{y_k}K_j\Big(2^j(x-y_1),\ldots,2^j(x-c_k-t(y_k-c_k)),\ldots, 2^j(x-y_m)\Big)\\
    &\times(2^j(y_k-c_k))^{\alpha}a_1(y_1)\cdots a_m(y_m)\ dt\Big\}\ dy_1\cdots dy_m.
    \end{aligned}
    $$
    Repeat the preceding argument to obtain
    \begin{equation}\label{equ:H1NFunc}
    \prod_{i=1}^r\left<2^j(x-c_i)\right>^{s_i}{\left\vert{g_j(x)}\right\vert}\lesssim 2^{jrn}\prod_{i=1}^r{\left\vert{Q_i}\right\vert}^{1-\frac1{p_i}} h_j^{(k,1)}(x)\prod_{i=r+1}^mb_i(x)
    \end{equation}
    for all $x\in (\cap_{i=r+1}^mQ_i^*)\setminus (\cup_{i=1}^rQ_i^*),$
    where $b_i(x) = {\left\vert{Q_i}\right\vert}^{-\frac1{p_i}}\chi_{Q_i^*}(x)$ for $r+1\le i\le m$ and
    $$
    \begin{aligned}
    h_j^{(k,1)}(x) = &\sum_{{\left\vert{\alpha}\right\vert}=N_k}\int_{Q_k}\Big\{\int_0^1\left<2^jx_{c_k,y^k}^t\right>^{s_k}\\
    &\times\norm{
    \prod_{\substack{i=1\\i\ne k}}^{m}\left<y_i\right>^{s_i} {\partial^{\alpha}_{y_k}K_j(y_1,\ldots,y_{k-1},2^jx_{c_k,y^k}^t,y_{k+1},\ldots,y_m)}}_{L^2(dy_1  \cdots \widehat{dy_k} \cdots dy_m)}\\
    &\times(2^j\ell(Q_k))^{N_k}{\left\vert{Q_k}\right\vert}^{-1}\ dt\Big\}\ dy_k,
    \end{aligned}
    $$
    with $x_{c_k,y_k}^t=x-c_k-t(y_k-c_k).$
    Applying Minkowski's inequality together with Lemma \ref{lem:ProdSobN} implies that
    $$
    \norm{h_j^{(k,1)}}_{L^2}\le A 2^{-\frac{jn}{2}}(2^j\ell(Q_k))^{N_k}.
    $$
    Combine inequalities \eqref{ali:H10Func} and \eqref{equ:H1NFunc}, we get
    \begin{equation}\begin{split} \label{equ:GjQ1}
     & \prod_{i=1}^r\left<2^j(x-c_i)\right>^{s_i}{\left\vert{g_j(x)}\right\vert}
      \\
    & \hspace{1in} \le 2^{jrn}\prod_{i=1}^r{\left\vert{Q_i}\right\vert}^{1-\frac1{p_i}} \min\left\{h_j^{(k,0)}(x),h_j^{(k,1)}(x)\right\}\prod_{i=r+1}^mb_i(x)
    \end{split}\end{equation}
    for all $1\le k\le r.$ The inequalities in \eqref{equ:GjQ1} imply that
    \begin{equation}\begin{split}\label{equ:GjQ2}
   & {\left\vert{g_j(x)}\right\vert}  \\
  &   \le 2^{jrn}\prod_{i=1}^r{\left\vert{Q_i}\right\vert}^{1-\frac1{p_i}} \prod_{i=1}^r\left<2^j(x-c_i)\right>^{-s_i}
  \!\!\min_{1\le k\le r}\left\{h_j^{(k,0)}(x),h_j^{(k,1)}(x)\right\}\!\! \prod_{i=r+1}^m \!\! b_i(x)
    \end{split}\end{equation}
    for all $x\in (\cap_{i=r+1}^mQ_i^*)\setminus (\cup_{i=1}^rQ_i^*).$

    Now we need to construct functions $u^k_j$ $(1\le k \le r)$ such that $$g_j(x)\lesssim A\prod_{k=1}^ru^k_j(x)\prod_{i=r+1}^mb_i(x)$$ for all  $x\in (\cap_{i=r+1}^mQ_i^*)\setminus (\cup_{i=1}^rQ_i^*)$ and that
    $\norm{\sum_{j}u^k_j}_{L^{p_k}}\lesssim 1$ for all $1\le k\le r.$ Then the lemma follows by taking $b_k=\sum_{j}u^k_j$ ($1\le k\le r$) and $b_i = {\left\vert{Q_i}\right\vert}^{-\frac1{p_i}}\chi_{Q_i^*}$ $(r+1\le i\le m).$

    Indeed, we can choose $0<\lambda_k <\min\Big\{\frac12,\frac{s_k}{n}-\frac{1}{p_k}+\frac12\Big\}$ such that $$\sum_{k=1}^r\lambda_k= \dfrac{r-1}2.$$
    This is suitable since conditions \eqref{equ:SIndexCond} implies that
    $$
    \sum_{k=1}^r\min\Big\{\frac12,\frac{s_k}{n}-\frac{1}{p_k}+\frac12\Big\}>\frac{r-1}2.
    $$
    Set $\alpha_k=\frac1{p_k}-\frac12$ and $\beta_k = 2(\frac1{p_k}-\alpha_k).$
    Then we have
    $$\sum_{k=1}^r\alpha_k=\sum_{k=1}^r\dfrac1{p_k}-\dfrac12,$$
    $\beta_k>0$ and $\beta_1+\cdots+\beta_r=1.$
    Now define
    $$
    u^k_j = A^{-\beta_k}2^{jn}{\left\vert{Q_k}\right\vert}^{1-\frac1{p_k}}
    \left<2^j(\cdot-c_k)\right>^{-s_k}\chi_{(Q_k^*)^c}
 \min\left\{h_j^{(k,0)},h_j^{(k,1)}\right\}^{\beta_k},\quad 1\le k\le r .
    $$
    Then, from \eqref{equ:GjQ2}, it is easy to see that
    $$
    g_j(x)\lesssim A\prod_{k=1}^ru^k_j(x)\prod_{i=r+1}^mb_i(x)
    $$
     for all  $x\in (\cap_{i=r+1}^mQ_i^*)\setminus (\cup_{i=1}^rQ_i^*).$ It remains to check that
    $\sum_{j}\int_{\mathbb R^n}{\left\vert{u^k_j(x)}\right\vert}^{p_k}dx\lesssim 1.$

    Since $\dfrac1{p_k} = \alpha_k+\dfrac{\beta_k}2,$ setting $\frac{1}{p_k'}=1-\frac1{p_k}$,   H\"older's inequality gives
    \begin{align*}
    \norm{u^k_j}_{L^{p_k}} \le A^{-\beta_k}2^{jn}{\left\vert{Q_k}\right\vert}^{ \frac1{p_k'}}
    \norm{\left<2^j(\cdot-c_k)\right>^{-s_k}\chi_{(Q_k^*)^c}}_{L^{\frac{1}{\alpha_k}}}
    \norm{ \min\left\{h_j^{(k,0)},h_j^{(k,1)}\right\} ^{\beta_k}}_{L^{\frac{2}{\beta_k}}}
    \end{align*}
    for all $1\le k\le r.$
    Notice that $n<\frac{s_k}{\alpha_k},$ we have
    $$
    \norm{\left<2^j(\cdot-c_k)\right>^{-s_k}\chi_{(Q_k^*)^c}}_{L^{1/\alpha_k}}\lesssim 2^{-jn\alpha_k} \min\left\{1,(2^j\ell(Q_k))^{\alpha_kn-s_k}\right\}
    $$
    and
    $$
    \begin{aligned}
    \norm{\left(\min\left\{h_j^{(k,0)},h_j^{(k,1)}\right\}\right)^{\beta_k}}_{L^{2/\beta_k}}
    \le &\min\left\{\norm{h_j^{(k,0)}}_{L^2}^{\beta_k},\norm{h_j^{(k,1)}}_{L^2}^{\beta_k}\right\}\\
    \lesssim & \Big(A2^{-jn/2}\min\left\{1,(2^j\ell(Q_k))^{N_k}\right\}\Big)^{\beta_k}.
    \end{aligned}
    $$
    Therefore
    $$
    \begin{aligned}
    \norm{u^k_j}_{L^{p_k}}\le& 2^{jn}{\left\vert{Q_k}\right\vert}^{1-\frac1{p_k}}2^{-\frac{jn}{p_k}}
    \min\left\{1,(2^j\ell(Q_k))^{\alpha_kn-s_k}\right\}
    \min\left\{1,(2^j\ell(Q_k))^{N_k\beta_k}\right\}\\
    \lesssim & (2^j\ell(Q_k))^{n-\frac{n}{p_k}}
    \min\left\{1,(2^j\ell(Q_k))^{\alpha_kn-s_k}\right\}
    \min\left\{1,(2^j\ell(Q_k))^{N_k\beta_k}\right\}.
    \end{aligned}
    $$
    This inequality is enough to establish what we needed
    $
    \sum_{j\in\mathbb Z}\int_{\mathbb R^n}{\left\vert{u^k_j(x)}\right\vert}^{p_k}dx\lesssim 1.
    $
The proof of Lemma \ref{lem:KeyLem} is complete.

\bibliographystyle{amsplain}

\end{document}